\theoremstyle{plain}
\newtheorem{question}{Question}
\newtheorem{thm}{Theorem}[section]
\newtheorem{lemma}[thm]{Lemma}
\newtheorem{cor}[thm]{Corollary}
\newtheorem{prop}[thm]{Proposition}
\newtheorem{claim}[thm]{Claim}
\theoremstyle{definition}
\newtheorem{example}[thm]{Example}
\newtheorem{mydef}[thm]{Definition}
\theoremstyle{remark}
\newcommand{\bbd}{\mathbb{D}}
\newcommand{\dps}[1]{\displaystyle{#1}}
\newcommand{\class}[1]{\langle #1 \rangle}
\newcommand{\gauss}[1]{\left[ #1 \right]}
\newcommand{\bracket}[1]{\left(#1\right)}
\DeclareMathOperator{\arccosh}{arccosh}
\DeclareMathOperator{\sys}{sys}
\DeclareMathOperator{\kiss}{kiss}
\DeclareMathOperator{\area}{Area}
\DeclareMathOperator{\perim}{Perim}
\let\dd\undefined
\newcommand\dd[1]{\ensuremath{%
  \mathop{}\!\mathrm{d}#1\@ifnextchar\dd{\!}{}}}
\begin{document}

\title{Shortest filling geodesics on hyperbolic surfaces}

\author{Yue Gao}
\address{School of Mathematics and Statistics, Anhui Normal University, Wuhu, Anhui, 241002, P. R. China}
\email{yuegao@ahnu.edu.cn}

\author{Jiajun Wang}
\address{LMAM, School of Mathematical Sciences, Peking University, Beijing, 100871, P. R. China}
\email{wjiajun@pku.edu.cn}

\author{Zhongzi Wang}
\address{LMAM, School of Mathematical Sciences, Peking University, Beijing, 100871, P. R. China}
\email{wangzz22@stu.pku.edu.cn}

\maketitle

\begin{abstract}
In this paper, we obtain the minimal length of a filling (multi-)geodesic on a genus $g$ hyperbolic surface in the moduli space of hyperbolic surfaces and show that it is realized by the geodesic whose complement is a right-angled regular $(8g-4)$-gon. A single geodesic realizing this minimum is provided. 
\end{abstract}

\setcounter{tocdepth}{1}
\tableofcontents

\section{Introduction}
A set of closed geodesics on a hyperbolic surface is filling if the geodesics cut the surface into disks. The study on filling geodesics on hyperbolic surfaces is initiated by Thurston in a series of work, including Nielsen realization Theorem \cite{kerckhoff1983nielsen}; construction of pseudo-Anosov maps \cite{thurston1988geometry}; Thurston's spine and his conjecture on the deformation retract of the moduli space of all genus g hyperbolic surfaces \cite{thurston1986spine}. This concept is widely concerned in the study of hyperbolic surfaces and various topics, for example, the mapping class group and curve complex \cite{masur1999geometry,masur2000geometry}, the systole function \cite{schaller1999systoles} and the spectra of Laplacian of hyperbolic surfaces \cite{wu2022random, lipnowski2024towards, monk2025random}. Filling geodesics are not rare, in fact, on a random hyperbolic surface, most sufficiently long geodesics are filling  \cite{wu2022prime, dozier2023counting}. A set of closed geodesics is called a \emph{multi-geodesic} and the length of a multi-geodesic is the total length of the geodesics.

A natural question is:

\begin{question}\label{ques:start}
What is the minimal length of a filling multi-geodesic on a hyperbolic surface of genus $g$, among the moduli space of all genus g hyperbolic surfaces?
\end{question}

The minimal length can be realized by Mumford's compactness theorem \cite{mumford1971remark} and the collar lemma. 
Our main result answers Question \ref{ques:start}.

\begin{thm}\label{thm:main}
The length of a filling multi-geodesic on a closed orientable hyperbolic surface of genus $g$ is no less than half of the perimeter of the regular right-angled hyperbolic $(8g - 4)$-gon, and the minimal length can be realized by a single filling geodesic.
\end{thm}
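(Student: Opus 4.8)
The plan is to convert a filling multi-geodesic into a combinatorial–metric datum and then solve an optimization problem whose minimizer is the right-angled regular $(8g-4)$-gon decomposition.

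\textbf{Step 1: the cell structure and a triangulation.} Let $\gamma$ be a filling multi-geodesic of length $\ell$ on a genus $g$ surface $X$. Its self- and mutual intersection points, the subarcs between consecutive such points, and the complementary regions form a cell decomposition of $X$ with $V$ vertices, $E$ edges and $F$ faces; every vertex has even valence $\ge 4$ (distinct strands cross transversally and each strand runs straight through a crossing), the faces are disks since $\gamma$ fills, $V-E+F=2-2g$, the total area is $\area(X)=4\pi(g-1)$, and the sum of the lengths of all edges equals $\ell$. Because a finite family of straight lines through a point in $\mathbb H^2$ cuts out only sectors of angle $<\pi$, every corner angle of every face is $<\pi$; hence each face is a locally convex hyperbolic polygon, and lifting to $\mathbb H^2$ (which is $\mathrm{CAT}(0)$) shows it is convex. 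Choosing an interior point of each face and coning off its boundary subdivides a face with $n$ sides into $n$ triangles; this produces $\sum_i n_i = 2E$ hyperbolic triangles, and every edge of $\gamma$ is the base of exactly two of them, so the sum of the base lengths over all triangles equals $2\ell$.

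\textbf{Step 2: the local identity and the global constraints.} Write $\mu_t$ for the apex angle of the $t$-th triangle (the angle at the cone point) and $a_t,b_t$ for its two base angles; the hyperbolic law of cosines gives its base length as $\arccosh\frac{\cos\mu_t+\cos a_t\cos b_t}{\sin a_t\sin b_t}$, which in the isosceles case $a_t=b_t$ simplifies to $2\arccosh\frac{\cos(\mu_t/2)}{\sin a_t}$. Summing apex angles around each cone point gives $\sum_t\mu_t=2\pi F$; summing base angles at each vertex of $\gamma$ (they fill the total angle $2\pi$) gives $\sum_t(a_t+b_t)=2\pi V$; and at each $4$-valent vertex with crossing angle $\phi$ the four face-corners are $\phi,\pi-\phi,\phi,\pi-\phi$. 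Thus $2\ell$ is a sum over the $2E$ triangles of the explicit function of the angles above, subject to these linear and structural constraints, and the target value is attained by the symmetric datum: one face, every crossing angle $\pi/2$, every $\mu_t=\tfrac{2\pi}{8g-4}$, every $a_t=b_t=\tfrac{\pi}{4}$ (the coning of the regular right-angled $(8g-4)$-gon from its centre).

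\textbf{Step 3: the optimization (the main obstacle).} One first checks the easy reduction that for fixed $\mu_t$ and fixed $a_t+b_t$ the base length is monotone in $\cos(a_t-b_t)$, hence minimized when $a_t=b_t$, so one may use the isosceles formula. The crux is then to show that over all admissible angle data with $\sum_t\mu_t=2\pi F$ and $\sum_t a_t=\pi V$ the sum $\sum_t 2\arccosh\frac{\cos(\mu_t/2)}{\sin a_t}$ is minimized at the symmetric datum, giving
\[
\ell\ \ge\ 2E\,\arccosh\frac{\cos\!\big(\tfrac{\pi F}{2E}\big)}{\sin\!\big(\tfrac{\pi V}{2E}\big)},
\]
and finally that the right-hand side, optimized over the feasible integer triples $(V,E,F)$ with $V-E+F=2-2g$, $E\ge 2V$, $F\ge 1$, is minimized at $(2g-1,4g-2,1)$, where it equals $(8g-4)\arccosh\big(\sqrt2\cos\tfrac{\pi}{8g-4}\big)=(4g-2)L$, i.e.\ half the perimeter of the regular right-angled $(8g-4)$-gon whose side length $L$ satisfies $\cosh L=2\cos\tfrac{\pi}{4g-2}+1$. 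I expect this to be the hard part: the per-triangle function $\arccosh\frac{\cos(\mu/2)}{\sin a}$ is strictly concave in $\mu$, so Jensen's inequality does not apply directly; one must instead exploit the per-face angle sums, the dichotomy $\phi,\pi-\phi$ of the crossing angles, and the realizability of the datum as an honest hyperbolic surface, and rule out degenerate limits (an edge or a face collapsing) via Mumford compactness and the collar lemma so that the minimum is actually attained. The remaining discrete optimization over $(V,E,F)$ is a short calculus estimate.

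\textbf{Step 4: sharpness by a single geodesic.} Equality forces a single face isometric to the regular right-angled $(8g-4)$-gon. To realize a minimizer by one geodesic, glue the $8g-4$ sides of this polygon in pairs so that the quotient is a closed orientable surface; since each corner is a right angle, the corners automatically fall into $2g-1$ classes of four, the quotient has genus $g$, and at each resulting vertex any edge continues straight through (an edge and the one two steps around it subtend angle $\pi$), so the image of the polygon's boundary is a filling geodesic multicurve of length $(4g-2)L$. Choosing the gluing so that this multicurve is connected — which can always be arranged — produces a single filling geodesic attaining the bound.
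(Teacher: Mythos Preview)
Your Step~3 is where the argument breaks, and you say so yourself. After the isosceles reduction you are minimizing $\sum_{t=1}^{2E} 2\arccosh\frac{\cos(\mu_t/2)}{\sin a_t}$ subject only to the two linear constraints $\sum_t \mu_t = 2\pi F$ and $\sum_t a_t = \pi V$; since the summand is \emph{concave} in $\mu_t$, the infimum under these relaxed constraints is attained at an extreme point of the polytope, not at the barycentre, so the displayed Jensen-type inequality is false as stated. You invoke the extra structure (``per-face angle sums, the dichotomy $\phi,\pi-\phi$, realizability as an honest surface'') but give no mechanism for using it, and your isosceles averaging has already discarded the per-corner constraints $b_{t-1}+a_t=\alpha_j$ that encoded most of that structure. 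The paper's Example~\ref{emp:triangle_fail}, exhibiting $P_6(x)+P_3(a-x)<P_5(a)$, shows concretely that once triangular faces are allowed the regular configuration is \emph{not} the perimeter minimizer; if your optimization went through directly on the raw cell decomposition of $\Gamma$ it would prove an isoperimetric inequality that is known to fail. This is why the paper's proof has two genuinely separate ingredients: first replace $\Gamma$ by a filling geodesic graph $G$ of no greater length all of whose complementary polygons have at least five sides (Theorem~\ref{thm:filling_graph}, a hierarchy-type construction that is not at all formal), and only then apply an isoperimetric inequality (Theorem~\ref{thm:isoperim}), itself proved for $m_i\geqslant 4$ by an induction with several ad hoc calculus lemmas rather than by any convexity. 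Your triangulation scheme reparametrizes the same isoperimetric problem but in a strictly harder regime, and the missing step is essentially the whole content of Sections~\ref{sec:subgraph}--\ref{sec:iso}.

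A secondary gap is Step~4: neither ``the corners automatically fall into $2g-1$ classes of four'' nor ``any edge continues straight through'' nor ``the multicurve can be made connected'' is automatic; each depends on the specific side-pairing. The paper supplies an explicit pairing (Figure~\ref{fig:shortest_candidate}) and verifies these properties.
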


Let $P_k$ ($k\geqslant 5$) be the perimeter of the regular right-angled (hyperbolic) $k$-gon. We give the criterion for a filling multi-geodesic to have the minimal length.

\begin{thm}\label{thm:rigid}
A filling multi-geodesic $\Gamma$ on a closed orientable hyperbolic surface of genus $g$ has length $\frac12P_{8g-4}$ if and only if $X\setminus\Gamma$ is a regular right-angled $(8g-4)$-gon. 
\end{thm}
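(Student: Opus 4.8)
We outline the argument; throughout, $P_1,\dots,P_F$ denote the complementary polygons of a filling multi-geodesic $\Gamma$ on $X$, $V$ the number of (transverse) intersection points of $\Gamma$, so that the induced cell decomposition has $E=2V$ edges and, by Euler's formula, $F=V-2g+2$ faces; each $P_i$ is an $n_i$-gon with $n_i\geq 3$ (there are no mono- or bigons, by Gauss--Bonnet), $\sum_i n_i=4V$, and hence $F\geq 1$, i.e. $V\geq 2g-1$. One has $2\ell(\Gamma)=\sum_i\perim(P_i)$, $\sum_i\area(P_i)=4\pi(g-1)$, and at every crossing the four face-angles form two complementary pairs $\phi,\pi-\phi,\phi,\pi-\phi$.

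The ``if'' direction is a direct computation. If $X\setminus\Gamma$ is a single regular right-angled $(8g-4)$-gon $P$, then by Gauss--Bonnet $\area(P)=(8g-6)\pi-(8g-4)\tfrac{\pi}{2}=4\pi(g-1)=\area(X)$, consistent with $F=1$; the $8g-4$ sides of $P$ are glued in pairs to the $4g-2$ edges of $\Gamma$, each edge occurring twice on $\partial P$, whence $2\ell(\Gamma)=\perim(P)=P_{8g-4}$ and $\ell(\Gamma)=\tfrac12 P_{8g-4}$.

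For the ``only if'' direction the plan is to revisit the proof of Theorem \ref{thm:main} and track the equality case at every step. That proof produces a chain $2\ell(\Gamma)=\sum_i\perim(P_i)\geq\cdots\geq P_{8g-4}$, and I would show this chain is an equality only when $F=1$. The delicate point is that the sharp bound is \emph{not} a consequence of the hyperbolic polygon isoperimetric inequality applied face by face: for instance, for $g=2$ a $13$-gon of area close to $4\pi$ together with a tiny triangle would already give $\sum_i\perim(P_i)<P_{12}$, so the combinatorial angle condition at the crossings (the complementary pairs, which forbid such a near-regular large face from coexisting with small faces) must be used. I would extract from the proof of Theorem \ref{thm:main} the strict inequality ``$F\geq 2\ \Rightarrow\ 2\ell(\Gamma)>P_{8g-4}$''. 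Equality then forces $F=1$, so $X\setminus\Gamma$ is a single $(8g-4)$-gon $P$ with $\area(P)=4\pi(g-1)$.

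It remains to see $P$ is regular right-angled. For a single $(8g-4)$-gon of fixed area $4\pi(g-1)$ the surviving inequality $\perim(P)\geq P_{8g-4}$ is exactly the hyperbolic polygon isoperimetric inequality, since the regular $(8g-4)$-gon of area $4\pi(g-1)$ has vertex angle $\frac{(8g-6)\pi-4\pi(g-1)}{8g-4}=\frac{\pi}{2}$ and perimeter $P_{8g-4}$; invoking the rigidity of that inequality (the regular $n$-gon is the unique minimizer of perimeter among $n$-gons of fixed area) forces $P$ to be regular, and a regular $(8g-4)$-gon of area $4\pi(g-1)$ is right-angled. Combining the two parts gives the stated equivalence. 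The main obstacle is the strictness used above: one must upgrade the key estimate behind Theorem \ref{thm:main} into a genuine rigidity statement — that any filling multi-geodesic whose complement has at least two faces is \emph{strictly} longer than $\tfrac12 P_{8g-4}$ — and this is precisely where the combinatorial constraints at the crossings, rather than merely the areas and side-numbers of the faces, must be exploited quantitatively; the concluding isoperimetric step is then a clean application of a classical rigidity fact.
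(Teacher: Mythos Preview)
Your ``if'' direction is fine, but the ``only if'' sketch rests on a mischaracterization of how Theorem~\ref{thm:main} is proved. You write that the proof produces a chain $2\ell(\Gamma)=\sum_i\perim(P_i)\geq\cdots\geq P_{8g-4}$ with the $P_i$ the faces of $\Gamma$, and that one then tracks equality. It does not. Because the complement of $\Gamma$ may contain triangles (as you yourself note, and as Example~\ref{emp:triangle_fail} shows the isoperimetric inequality genuinely fails there), the paper never applies Theorem~\ref{thm:isoperim} to the faces of $\Gamma$. Instead it invokes Theorem~\ref{thm:filling_graph} to build an auxiliary geodesic graph $G$ with $\ell(G)\leq\ell(\Gamma)$ whose complementary polygons all have at least five sides and satisfy the correct side-count identity; the isoperimetric inequality is applied to the faces of $G$, not of $\Gamma$. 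In general $G$ is not even a subgraph of $\Gamma$: arcs are replaced by shortest homotopic representatives.

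Consequently, the rigidity argument is not ``$F\geq 2\Rightarrow$ strict inequality'' at the level of $\Gamma$'s faces, nor does it use the complementary-angle constraint at crossings in the way you suggest. The actual step you are missing is: from $\ell(\Gamma)=\tfrac12 P_{8g-4}$ and $\ell(G)\geq\tfrac12 P_{8g-4}$ one gets $\ell(G)=\ell(\Gamma)$; tracing the construction in Theorem~\ref{thm:filling_graph}, equality forces every arc-shortening to be trivial (so $G\subset\Gamma$) and then $G=\Gamma$ by length. Only \emph{after} establishing $G=\Gamma$ does one know the faces of $\Gamma$ have $\geq 5$ edges and satisfy~\eqref{eqn:subgraph_complexity}, at which point the equality clause of Theorem~\ref{thm:isoperim} gives a single regular right-angled $(8g-4)$-gon. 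Your final isoperimetric step is correct in spirit, but you need to bridge from $G$ back to $\Gamma$ first; without that, the argument has a genuine gap.
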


A \emph{filling pair} is a pair of simple closed geodesics that fills the surface and the minimal length of filling pairs was previously studied. Aougab-Huang \cite{aougab2015minimally} conjectured that any filling pair on a hyperbolic surface has a length at least $\frac12P_{8g-4}$, and they proved the conjecture for the case that the complement of the filling pair has at most two components. Sanki-Vadnere \cite{sanki2021conjecture} and Gaster \cite{gaster2021short} proved Aougab-Huang's conjecture for general filling pairs independently. The proof by Sanki-Vadnere relies on an isoperimetric inequality for polygons with even edges.

Besides a construction of a single filling geodesic realizing the minimal length, there are two ingredients in the proof of Theorem \ref{thm:main}. A filling multi-geodesic is considered as a geodesic graph on the surface. We first construct a filling graph with a smaller length so that every component of its complement has at least 5 edges (Theorem \ref{thm:filling_graph}). Then we apply an isoperimetric inequality (Theorem \ref{thm:isoperim}) to the complement to obtain the lower bound. 

Theorem \ref{thm:isoperim} generalizes the isoperimetric inequality by Sanki-Vadnere \cite{sanki2021conjecture} to polygons with at least 4 edges. For filling pairs, there are only double intersection points and every complementary disk has even edges. Thus Sanki-Vadnere's isoperimetric inequality suffices to prove the length lower bound for filling pairs. Theorem \ref{thm:main} considers arbitrary filling multi-geodesics, for which multiple intersection points and triangles are inevitable. It is necessary to generalize the isoperimetric inequality by Sanki-Vadnere to arbitrary polygons with at least four edges (which is the best possible case, see Example \ref{emp:triangle_fail}). The method to prove Theorem \ref{thm:isoperim} is influenced by Sanki-Vadnere's work, but technically more difficult. Since the isoperimetric inequality may fail when there are triangles, it is crucial to find such a triangle-free graph as in Theorem \ref{thm:filling_graph}. The proof of Theorem \ref{thm:filling_graph} is inspired by the hierarchy argument in 2- and 3-dimensional topology.

A natural corollary to Theorem \ref{thm:main} on the systole of hyperbolic surfaces is 

\begin{cor}\label{cor:sys}
Let $X$ be a genus $g$ closed orientable hyperbolic surface. Let $\sys(X)$ be the systole length for $X$, that is, the minimal length of simple closed geodesics on $X$. Let $\Gamma$ be the set of simple closed geodesics with length $\sys(X)$ and $\kiss(X)=\#\Gamma$ be the kissing number of $X$. If $\Gamma$ fills $X$, then 
\begin{equation*}
\sys(X) \kiss(X) \geqslant \frac{1}{2} P_{8g-4}. 
\end{equation*}
\end{cor}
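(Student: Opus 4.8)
\textbf{Proof proposal for Corollary \ref{cor:sys}.}
The plan is to recognize that under the stated hypothesis the systolic set $\Gamma$ is itself an admissible filling multi-geodesic, so that Theorem \ref{thm:main} applies directly, and the only work is to bookkeep its total length. First I would recall the standard fact that on a closed hyperbolic surface the systole is always realized by \emph{simple} closed geodesics: a shortest closed geodesic cannot have a self-intersection, since resolving a self-intersection produces strictly shorter closed curves, one of which is still essential. Hence every element of $\Gamma$ is a simple closed geodesic of length exactly $\sys(X)$, and $\Gamma$ is a legitimate multi-geodesic in the sense of the Introduction (intersections between distinct members are allowed).

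Next I would compute the length of $\Gamma$ as a multi-geodesic. By definition the length of a multi-geodesic is the sum of the lengths of its constituent geodesics; since $\Gamma$ consists of $\#\Gamma = \kiss(X)$ geodesics each of length $\sys(X)$, its total length is
\begin{equation*}
\ell(\Gamma) = \sys(X)\,\kiss(X).
\end{equation*}
By hypothesis $\Gamma$ fills $X$, i.e.\ $X\setminus\Gamma$ is a union of disks, so $\Gamma$ is a filling multi-geodesic on a genus $g$ closed orientable hyperbolic surface. Theorem \ref{thm:main} then gives $\ell(\Gamma)\geqslant \tfrac12 P_{8g-4}$, which is exactly the claimed inequality $\sys(X)\kiss(X)\geqslant \tfrac12 P_{8g-4}$.

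There is essentially no serious obstacle here: the corollary is a one-line consequence of Theorem \ref{thm:main} once the hypothesis ``$\Gamma$ fills $X$'' is in force. The only points worth a sentence of care are (i) that the systolic geodesics are simple, so that $\Gamma$ indeed meets the definition of a (multi-)geodesic used in Theorem \ref{thm:main}, and (ii) that the ``length of a multi-geodesic'' convention is the one under which $\ell(\Gamma)=\sys(X)\kiss(X)$, i.e.\ the total length over the finite set $\Gamma$. Both are immediate. If desired, one could also record the consequence that equality forces $X\setminus\Gamma$ to be a regular right-angled $(8g-4)$-gon via Theorem \ref{thm:rigid}, but this is not needed for the stated inequality.
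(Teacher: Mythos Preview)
Your proof is correct and matches the paper's approach: the paper states Corollary~\ref{cor:sys} as an immediate consequence of Theorem~\ref{thm:main} without giving a separate proof, and your argument is exactly the intended one-line deduction $\ell(\Gamma)=\sys(X)\kiss(X)\geqslant\tfrac12 P_{8g-4}$. The remark about simplicity of systolic geodesics is fine but not actually needed, since Theorem~\ref{thm:main} applies to arbitrary filling multi-geodesics (sets of closed geodesics), not just simple ones.
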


It is proved in \cite{bourque2023linear} that $\sys(X) \leqslant 2\log g + 2.409$ when $g$ is sufficiently large. Since $P_{8g-4}=2(8g-4)\arccosh(\sqrt{2}\cos(\pi/(8g-4)) )$, we have

\begin{cor}\label{cor:kiss}
When $g$ is sufficiently large, for a genus $g$ hyperbolic surface $X$ with filling systole, 
\begin{equation}\label{eqn:kissing_number_bound}
\kiss(X) \geqslant \frac{ \frac{1}{2} P_{8g-4}}{\sys(X)} \geqslant  3.525 \frac{g}{\log g}. 
\end{equation}
\end{cor}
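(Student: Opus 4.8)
The statement to prove is Corollary \ref{cor:kiss}, which chains together two facts: Theorem \ref{thm:main} (equivalently Corollary \ref{cor:sys}) and the upper bound $\sys(X) \leqslant 2\log g + 2.409$ from \cite{bourque2023linear}, together with the explicit formula $P_{8g-4} = 2(8g-4)\arccosh(\sqrt{2}\cos(\pi/(8g-4)))$.

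\begin{proof}[Proof proposal for Corollary \ref{cor:kiss}]
The plan is to combine Corollary \ref{cor:sys} with the systole upper bound and then estimate the resulting expression asymptotically. First, since $X$ has filling systole, the set $\Gamma$ of shortest geodesics fills $X$, so Corollary \ref{cor:sys} gives $\sys(X)\kiss(X) \geqslant \frac12 P_{8g-4}$, hence $\kiss(X) \geqslant \frac{\frac12 P_{8g-4}}{\sys(X)}$, which is the first inequality in \eqref{eqn:kissing_number_bound}. For the second inequality, I would bound the numerator from below and the denominator from above. For the denominator, use $\sys(X) \leqslant 2\log g + 2.409$ directly from \cite{bourque2023linear}, valid for $g$ large. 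For the numerator, substitute the explicit perimeter formula: $\frac12 P_{8g-4} = (8g-4)\arccosh(\sqrt{2}\cos(\pi/(8g-4)))$.

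The main technical step is to analyze $\arccosh(\sqrt{2}\cos(\pi/(8g-4)))$ as $g\to\infty$. As $g\to\infty$, $\cos(\pi/(8g-4))\to 1$, so $\sqrt{2}\cos(\pi/(8g-4)) \to \sqrt 2$, and therefore $\arccosh(\sqrt{2}\cos(\pi/(8g-4))) \to \arccosh(\sqrt 2)$. Now $\arccosh(\sqrt 2) = \log(\sqrt 2 + 1)$, and numerically $\log(1+\sqrt2) \approx 0.8814$. More carefully, for large $g$ the quantity $\arccosh(\sqrt 2 \cos(\pi/(8g-4)))$ is slightly smaller than $\arccosh(\sqrt2)$, but it converges to it, so for any $\epsilon > 0$ and $g$ sufficiently large we have $\arccosh(\sqrt 2\cos(\pi/(8g-4))) \geqslant \log(1+\sqrt 2) - \epsilon$. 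Thus
\[
\frac{\frac12 P_{8g-4}}{\sys(X)} \geqslant \frac{(8g-4)(\log(1+\sqrt2)-\epsilon)}{2\log g + 2.409}.
\]
As $g\to\infty$, the right-hand side is asymptotic to $\frac{8\log(1+\sqrt2)}{2}\cdot\frac{g}{\log g} = 4\log(1+\sqrt2)\,\frac{g}{\log g}$, and $4\log(1+\sqrt 2) \approx 3.5255$. Hence for $g$ sufficiently large (absorbing the $\epsilon$, the $-4$ shift in $8g-4$, and the additive constant $2.409$ in the denominator into the asymptotics), the ratio exceeds $3.525\,\frac{g}{\log g}$, which is the claimed bound.

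I expect the only real obstacle to be making the asymptotic estimate quantitative enough to justify the constant $3.525$ rather than the limiting value $4\log(1+\sqrt2) = 3.5255\ldots$; this is just a matter of choosing $\epsilon$ small (say $\epsilon = 0.001$) and checking that the lower-order corrections from $8g-4$ versus $8g$, from the additive $2.409$, and from the difference between $\arccosh(\sqrt2\cos(\pi/(8g-4)))$ and $\log(1+\sqrt2)$ are each $o(1)$ relative to the main term, so that the strict inequality $3.5255\ldots > 3.525$ leaves enough room for all of them once $g$ is large. No genuinely new idea beyond Theorem \ref{thm:main} and the cited systole bound is needed; the corollary is a direct consequence.
\end{proof}
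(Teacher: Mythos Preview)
Your proposal is correct and follows exactly the paper's approach: the paper presents Corollary \ref{cor:kiss} as an immediate consequence of Corollary \ref{cor:sys}, the systole bound $\sys(X)\leqslant 2\log g+2.409$ from \cite{bourque2023linear}, and the formula $P_{8g-4}=2(8g-4)\arccosh(\sqrt{2}\cos(\pi/(8g-4)))$, with the constant arising from $4\log(1+\sqrt2)\approx 3.5255$. Your asymptotic analysis is precisely the intended justification.
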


Corollary \ref{cor:kiss} improves the best known lower bound $\pi \frac{g}{\log g}$ by Anderson-Parlier-Pettet \cite{anderson2011}. 

For other constructions of a filling geodesic with $2g-1$ intersections, see for example \cite{arettines2015geometry} and his construction relies on ribbon graph. 
For filling pairs, when $g>2$, several examples with $2g-1$ intersections are constructed by Aougab-Huang \cite{aougab2015minimally}, Chang-Menasco \cite{chang2023construction} and Nieland \cite{nieland2016connectedsum}. 

If the filling condition is removed, a non-simple geodesic with self-intersection number $k$ has a length at least $C\sqrt{k}$ \cite{basmajian2013universal}. The minimal length of non-simple geodesics behaves quite differently to filling geodesics, which has been widely studied, see 
\cite{yamada1982marden, hempel1984traces, basmajian1993stable, baribaud1999closed, basmajian2013universal, basmajian2024shortest, erlandsson2020short, shen2022minimal, shen2023nonsimple, vo2022short, wang2024length}.

The minimal length of filling multi-geodesics can be regarded as a function on the moduli space $\mathcal{M}_g$, which has some similarity with the systole function. When approaching the Deligne-Mumford boundary of the moduli space, the systole function approaches to $0$ while the minimal length of filling multi-geodesics approaches to $\infty$. The maximum of the systole function has many estimates (see for example \cite{brooks1988injectivity, buser1994period, katz2007logarithmic, liu2023random, katz2025logarithmic, bourque2023linear}) and only been calculated in genus $2$ \cite{jenni1984ersten, bavard1992systole, schaller1999systoles}.

The paper is organized as follows. In Section \ref{sec:subgraph}, we show that, for any filling multi-geodesic, there exists a filling geodesic graph with a smaller length so that every component of its complement is a polygon with at least five edges. In Section \ref{sec:iso}, the isoperimetric inequality for polygons with at least four edges is established. In Section \ref{sec:minimal}, we construct a filling geodesic for any genus $g\geqslant2$ whose complement is the regular right-angled $(8g-4)$-gon, and prove Theorem \ref{thm:main}.

\subsection*{Acknowledgement:} 
We would like to thank Professors Hugo Parlier, Yunhui Wu and Ying Zhang for their helpful discussions. 

The first named author is supported by NSFC grant 12301082. The second named author is partially supported by the National Key R\&D Program of China 2020YFA0712800 and NSFC grant 12131009.

\section{Filling graph from filling geodesics}\label{sec:subgraph}

We prove that for any filling multi-geodesic, we can find a filling triangle-free geodesic graph.

\begin{thm}\label{thm:filling_graph}
Let $X$ be a closed hyperbolic surface of genus $g$ and $\Gamma$ be a filling multi-geodesic. Then there exists a geodesic graph $G\subset X$ such that $X\setminus G$ consists of polygons $D_1, D_2,\cdots, D_k$ with edges numbers $m_1,m_2,\cdots,m_k$ respectively, satisfying the following 
\begin{equation}\label{eqn:subgraph_length_decreasing}
\ell(G)\leqslant\ell(\Gamma),
\end{equation}
\begin{equation}\label{eqn:subgraph_triangle_free}
m_i\geqslant 5,\quad i=1,2,\cdots,k,
\end{equation}
\and 
\begin{equation}\label{eqn:subgraph_complexity}
m_1+\cdots+m_k=8g-8+4k.
\end{equation}
\end{thm}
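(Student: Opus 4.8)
The plan is to start from the filling multi-geodesic $\Gamma$, viewed as a geodesic graph (vertices = intersection points, with multiplicity handled by taking the actual $4$- or higher-valent crossing points), and repeatedly simplify it while never increasing the total length and while preserving the property that it fills. The key quantity to control is the Euler characteristic bookkeeping: if $G$ is a filling graph with $V$ vertices, $E$ edges and complementary disks $D_1,\dots,D_k$ with $m_i$ edges each, then $V - E + k = 2 - 2g$, and since every vertex has degree at least $3$ we have $2E \geqslant 3V$, while $\sum m_i = 2E$. Combining these gives $\sum m_i = 2E = 2(V + k - 2 + 2g) \leqslant \frac{2}{3}\sum m_i + 2k - 4 + 4g$, i.e. $\sum m_i \leqslant 6k - 12 + 12g$; equation \eqref{eqn:subgraph_complexity} is the statement that equality holds, which forces every vertex to be trivalent. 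So the real content is twofold: (a) reduce to the case where every vertex is trivalent, and (b) remove all monogons, bigons and triangles from the complement, all without increasing length.

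For step (a), at a vertex of $G$ of valence $d \geqslant 4$ coming from several geodesic strands crossing, I would split the vertex: perturb the picture so the high-valence crossing becomes several transverse double points, then pull each new arc tight to a geodesic. Pulling tight cannot increase length (geodesics are length-minimizers in their homotopy class rel endpoints), and a small perturbation near an intersection point does not destroy the filling property since the complement regions only change near that point. One must check that no new disk becomes a monogon or bigon in a way that cannot be undone — but even if it does, step (b) will handle it. Alternatively, and more cleanly matching the ``hierarchy'' language in the introduction, one can cut the surface along the arcs of $G$ one at a time and track how the complementary polygons evolve.

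For step (b), the heart of the argument, I would eliminate low-complexity complementary faces by \emph{edge collapses / strand surgeries} that strictly decrease length or decrease $E$. If a complementary region is a monogon, the bounding edge is a null-homotopic loop at a trivalent vertex; its two ``other'' strands can be rerouted and the loop deleted, and geodesic tautening strictly shortens since a geodesic bounding an embedded disk on one side cannot exist — so in fact monogons cannot occur for a \emph{geodesic} graph at all, and similarly an innermost bigon bounded by two geodesic arcs with the same endpoints is impossible unless the two arcs coincide. The genuinely delicate case is the triangle: here I would use the presence of a triangle to perform a surgery (a ``Whitehead-type'' move or a ``flype'' that slides one side of the triangle across the opposite vertex), producing a new filling geodesic graph whose length is no larger — typically strictly smaller after tautening — and with strictly fewer triangles, or fewer edges. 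The filling property is preserved because the surgery is supported in a disk neighborhood of the triangle together with a small collar, so complementary regions elsewhere are unaffected. Iterating, since $\ell(G)$ is bounded below (by Mumford compactness / the collar lemma, as noted in the introduction) and the combinatorial complexity $(E, \#\{\text{bad faces}\})$ is a well-ordered invariant that strictly drops, the process terminates at a triangle-free trivalent filling geodesic graph, which satisfies \eqref{eqn:subgraph_triangle_free} and hence, by the Euler-characteristic identity above, also \eqref{eqn:subgraph_complexity}; and \eqref{eqn:subgraph_length_decreasing} holds since no step ever increased length.

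The main obstacle I anticipate is making the triangle-removal surgery both well-defined and length-non-increasing in full generality: a triangle in $X \setminus G$ may share edges or vertices with other small faces, two sides of the triangle may be arcs of the same geodesic, and the ``across the vertex'' move must be set up so that the resulting arcs are still homotopically nontrivial and so that tautening to geodesics does not merely recreate a triangle elsewhere. Handling the bookkeeping of which faces are created or destroyed — and proving that a suitable complexity strictly decreases so the induction closes — is where the argument inspired by $2$- and $3$-dimensional hierarchy theory will need to be carried out carefully, presumably by an innermost-triangle argument and a finite case analysis of how a triangle sits relative to its neighbors.
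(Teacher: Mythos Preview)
Your Euler-characteristic bookkeeping contains a decisive error. From $V-E+k=2-2g$, $\sum m_i=2E$, and $2E\geqslant 3V$ you correctly derive $\sum m_i\leqslant 12g-12+6k$, with equality exactly when every vertex is trivalent. But \eqref{eqn:subgraph_complexity} asserts $\sum m_i=8g-8+4k$, which is a \emph{different} number: it is what you get when every vertex is $4$-valent (then $V=E/2$, hence $E=4g-4+2k$ and $\sum m_i=2E=8g-8+4k$). So your plan of splitting all crossings down to trivalent vertices moves you \emph{away} from \eqref{eqn:subgraph_complexity}, not toward it. In the paper the identity comes not from valence alone but from the angle structure: the graph $G$ is built so that every vertex is either a transverse double point or a ``T'' with two collinear edges, and straight angles are not counted as polygon corners. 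At each such vertex the counted corner angles pair off to sum to $\pi$, so the average corner angle is $\pi/2$, and Gauss--Bonnet then gives \eqref{eqn:subgraph_complexity} directly. A generic trivalent resolution destroys this pairing.

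Beyond this, the paper's strategy is essentially the opposite of yours. Rather than starting from $\Gamma$ and surgically removing bad faces, it builds $G$ \emph{up} from a single cutting curve by adjoining one essential arc at a time (replacing arcs by shortest homotopic representatives when necessary to control endpoints, and updating the ambient filling graph $\Theta$ to keep $\ell$ non-increasing). Convexity of the intermediate $G_i$ and Lemma~\ref{lemma:noncontractible_component_geodesic} guarantee that essential arcs exist until $G$ fills, and the only way a new polygon can be created is when an essential arc cuts an annulus; that annulus already has at least one non-straight corner and the new arc contributes four more, yielding $m_i\geqslant 5$ for free. Your outline, by contrast, only proposes to eliminate monogons, bigons and triangles, which would at best give $m_i\geqslant 4$, not the $m_i\geqslant 5$ the theorem asserts; and the triangle-surgery you sketch (``slide one side across the opposite vertex'') is not shown to be length-nonincreasing, to preserve filling, or to terminate --- the obstacle you yourself flag is real and unresolved. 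The hierarchy-type build-up in the paper sidesteps all of these difficulties simultaneously.
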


When $\Gamma$ has only double intersection points, $G$ can be chosen to be a subgraph of $\Gamma$. When $\Gamma$ has multiple intersection points, $G$ is in general a modification of a subgraph of $\Gamma$ to satisfy \eqref{eqn:subgraph_complexity}. The proof of Theorem \ref{thm:main} only requires $m_i\geqslant4$.

\subsection{Geodesic graphs and convex surfaces}

We prove some preliminary results on geodesic graphs and hyperbolic surfaces with piecewise geodesic boundary.

A graph is \emph{finite} if it has finite vertices and edges. All graphs in the paper are finite. An embedded graph $G$ on $X$ is \emph{geodesic} if all edges are geodesic. The \emph{length} of a geodesic graph $G$ is the sum of the lengths of its edges, denoted by $\ell(G)$.

For a nonempty geodesic graph $G$ on $X$, $X\setminus G$ is a hyperbolic surface with piecewise geodesic boundary (we always neglect the vertices on the boundary with straight angles).

\begin{mydef}\label{def:cvx}
A hyperbolic surface with piecewise geodesic boundary is \emph{convex} if the inner angle of every vertex on its boundary is smaller than $\pi$. A nonempty geodesic graph $G$ on $X$ is \emph{convex} if all vertices of $G$ are 3- or 4-valent, and, as illustrated in Figure \ref{fig:subgraph_convexity}, $G$ satisfies
\begin{figure}[htbp]
\centering
\includegraphics[width=.5\textwidth]{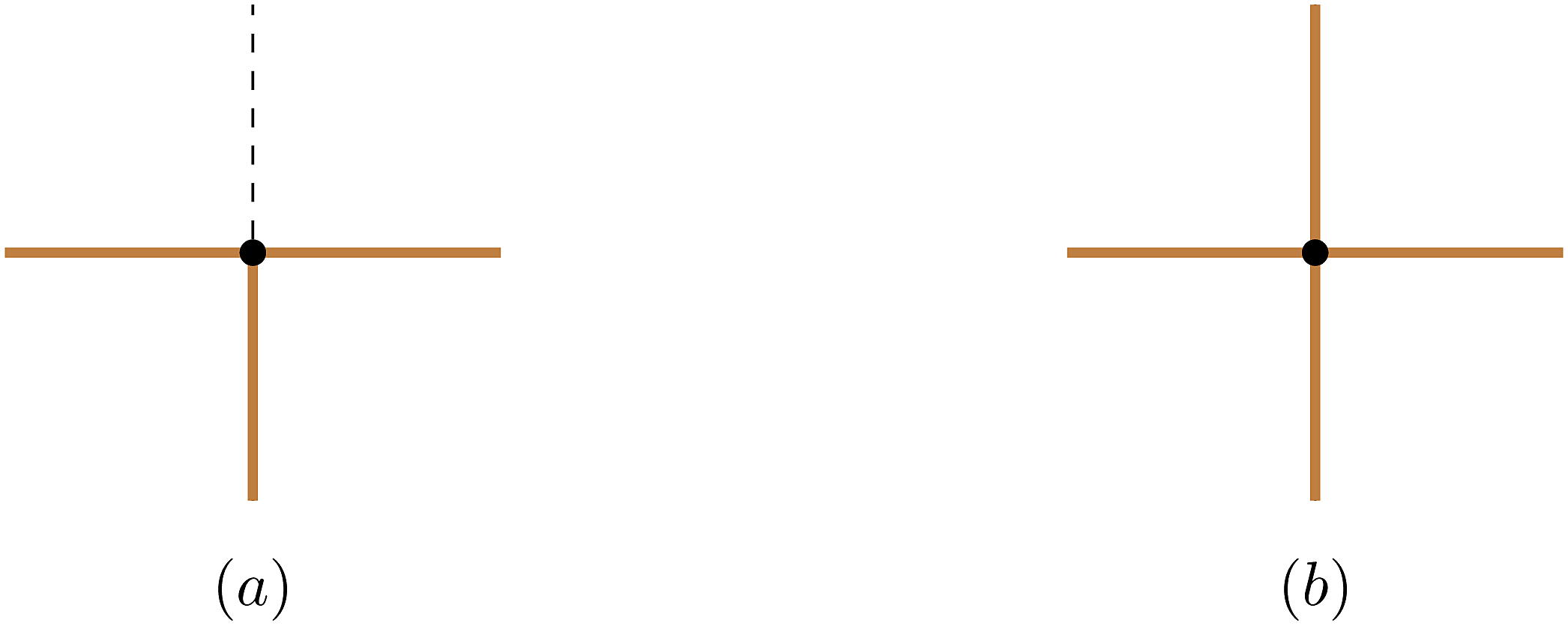}
\caption{Vertices of convex graphs}\label{fig:subgraph_convexity}
\end{figure}
\begin{enumerate}
\item every trivalent vertex contains two opposite edges;
\item every 4-valent vertex is the intersection of two geodesic segments.
\end{enumerate}
A convex graph $G$ is \emph{filling} if every connected component of $X\setminus G$ is a polygon. If $G\subset X$ is convex, then $X\setminus G$ is a convex surface.
\end{mydef}

\begin{lemma}\label{lemma:noncontractible_component_geodesic}
Let $X$ be a closed hyperbolic surface of genus $g$ and $G$ be a nonempty geodesic graph such that $X\setminus G$ is a convex hyperbolic surface. If a component $S$ of $X\setminus G$ is not contractible, then any boundary component $\alpha$ of $S$ cobounds an annulus with a simple closed geodesic $\gamma$ in $X$ such that the annulus $A$ has inner angles less than $\pi$ for all vertices on $\alpha$. (It follows that $\gamma\subset S$.)
\end{lemma}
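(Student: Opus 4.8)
The plan is to take a non-contractible component $S$ of $X\setminus G$ with piecewise-geodesic boundary, fix a boundary component $\alpha$, and produce the geodesic $\gamma$ by shrinking $\alpha$ within its free homotopy class in $X$. First I would argue that $\alpha$ is not null-homotopic in $X$: if it were, it would bound a disk, and since $S$ is on one side of $\alpha$ and is not a disk, the disk lies on the other side and is a union of some complementary components of $G$ together with part of $G$; pushing $G$ across this disk (or directly: the outermost argument) would contradict either that $G$ is a graph embedded in $X$ filling non-trivially, or simply that $X$ is not a sphere. So $\alpha$ represents a nontrivial free homotopy class, and there is a unique simple closed geodesic $\gamma$ freely homotopic to it in $X$.

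Next I would set up the annulus. Since $\alpha$ is a simple closed curve on the convex surface $S$ and $\gamma$ is its geodesic representative, I claim $\gamma$ is disjoint from $\alpha$ and cobounds an embedded annulus $A$ with $\alpha$. The key tool is convexity: the inner angle of $S$ at every vertex of $\alpha$ is less than $\pi$, so locally near $\alpha$ the surface $S$ curves \emph{away} from the region on the $\gamma$-side; combined with the fact that $\alpha$ is piecewise geodesic (hence geodesic away from finitely many vertices), one gets that $\alpha$ is weakly convex toward the side containing $\gamma$. A curve-shortening / annulus-uniqueness argument in the hyperbolic surface then shows the geodesic representative $\gamma$ lies strictly inside $S$ (on the $\gamma$-side of $\alpha$, which is $S$ since $\gamma$ is not freely homotopic into $G$), the region between them is an embedded annulus $A\subset S$, and $A$ contains no vertices of $G$ in its interior other than those on $\alpha$. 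The parenthetical conclusion $\gamma\subset S$ is then immediate.

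Finally I would verify the angle condition: for every vertex $v$ on $\alpha$, the inner angle of $A$ at $v$ is less than $\pi$. Since $A\subset S$ near $\alpha$ and the inner angle of $A$ at $v$ is at most the inner angle of $S$ at $v$ (the annulus $A$ sits inside the component $S$, possibly cut off by other edges of $G$ running into $\alpha$ from inside $S$), convexity of $S$ gives that this angle is $<\pi$. More carefully: at a vertex $v\in\alpha$, the two $\alpha$-edges emanate into $S$, and $A$ is the sub-wedge of $S$ at $v$ bounded by these two edges on the side of $\gamma$; because $S$ is convex at $v$ this wedge has angle $<\pi$.

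The main obstacle I anticipate is the second step — showing $\gamma$ actually lies \emph{inside} $S$ and that the region swept out is a genuine embedded annulus rather than something with self-tangencies or extra topology. The issue is that $\alpha$ bounds $S$ on one side but a possibly complicated union of other complementary polygons and arcs of $G$ on the other; one must rule out $\gamma$ escaping through $G$ on the far side. The clean way is to use that $\gamma$ is the \emph{unique} geodesic in its homotopy class and that convexity of $S$ forces the shortest-curve flow starting at $\alpha$ to move monotonically into $S$ (never crossing $G$, which would only be possible if $\alpha$ were non-convex toward $G$); alternatively, lift to the universal cover $\mathbb{H}^2$, where $\alpha$ lifts to a properly embedded convex piecewise-geodesic line invariant under a hyperbolic isometry whose axis is the lift of $\gamma$, and convexity places the axis on the $S$-side of the lifted line, with the strip between them projecting injectively to the desired annulus $A$.
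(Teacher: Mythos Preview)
Your overall architecture matches the paper's: show $\alpha$ is essential in $X$, take $\gamma$ to be the geodesic in its free homotopy class, show $\alpha$ and $\gamma$ are disjoint so they cobound an annulus, then pin down the angle condition. But two of your steps rest on arguments that do not go through as written, and the paper's fix in each case is the same tool: Gauss--Bonnet.

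\textbf{Step 1 has a genuine gap.} Your argument that $\alpha$ cannot bound a disk on the non-$S$ side is purely topological (``pushing $G$ across this disk'', ``$X$ is not a sphere''), and neither of those yields a contradiction. Topologically there is nothing wrong with a non-contractible $S$ having a boundary component that bounds a disk on the outside: take $S$ to be a one-holed torus and cap the hole with a disk containing some of $G$. The lemma has no filling hypothesis on $G$, so you cannot appeal to that either. What actually rules this out is geometry: if the disk $D$ lies outside $S$, then at each vertex the inner angle of $D$ is the complement of the inner angle of $S$, hence $>\pi$; Gauss--Bonnet then gives $\area(D)=\sum(\pi-A_i)-2\pi<0$, a contradiction. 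This is exactly the paper's argument.

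\textbf{Step 2 is a sketch where the paper gives a direct computation.} Your curve-shortening heuristic is not easy to make precise for piecewise-geodesic curves with corners, and the phrase ``convexity places the axis on the $S$-side'' hides the content. The paper instead supposes $\alpha\cap\gamma\neq\emptyset$, lifts to $\mathbb{D}$, and looks at two consecutive intersection points of $\widetilde\alpha$ with $\widetilde\gamma$: they bound a polygon whose $\widetilde\alpha$-vertices all have inner angle $>\pi$ (again because we are on the non-$S$ side), and Gauss--Bonnet forces negative area. So $\alpha\cap\gamma=\emptyset$. Then the paper does \emph{not} first prove $\gamma\subset S$ and deduce the angle condition, as you do; it argues the other way. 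Once $\alpha$ and $\gamma$ are disjoint and homotopic they cobound an annulus $A$, and the inner angles of $A$ along $\alpha$ are either all $<\pi$ or all $>\pi$; Gauss--Bonnet on $A$ kills the second option, and the angle conclusion then \emph{implies} $\gamma\subset S$.

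In short, the paper runs the same three-step outline but replaces each of your geometric heuristics with a one-line Gauss--Bonnet contradiction, exploiting that on the non-$S$ side of $\alpha$ every corner angle exceeds $\pi$.
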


\begin{proof}
Let $\alpha$ be a boundary component of $S$. Suppose that $\alpha$ bounds a disk $D$ on $X$. We cannot have $D\subset S$ since $S$ is not contractible. If $D$ lives outside $S$, then, as illustrated in Figure \ref{fig:disk_outside_S}, 
\begin{figure}[htbp]
\centering
\includegraphics[width=.25\textwidth]{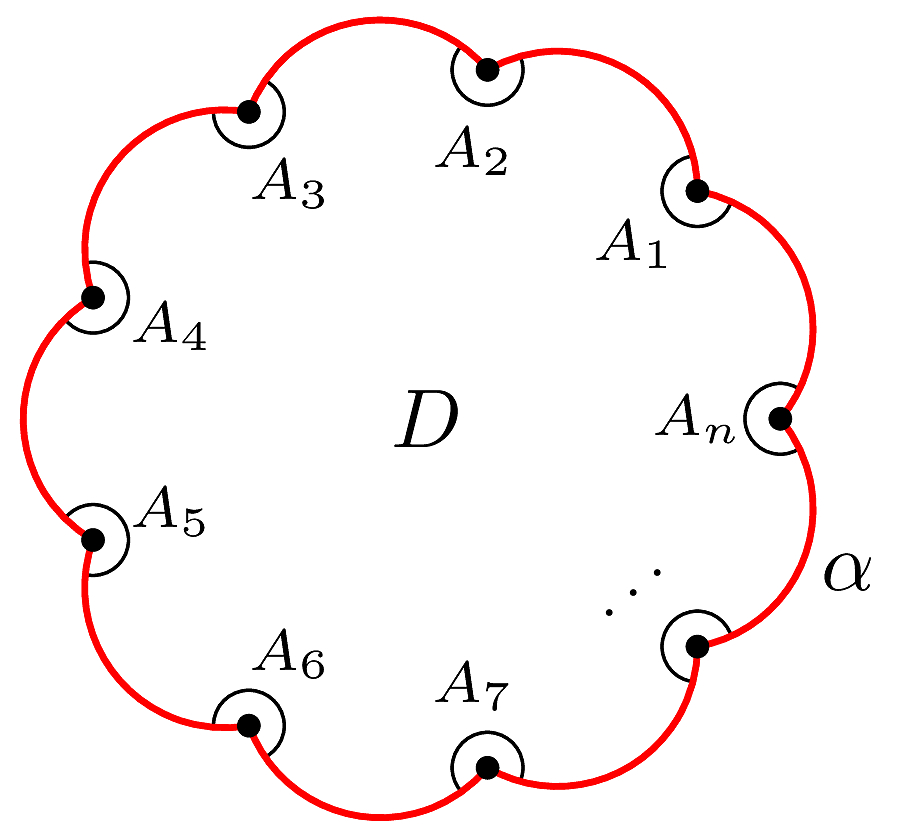}
\caption{A boundary component of $S$ bounding a disk outside $S$.}
\label{fig:disk_outside_S}
\end{figure}
the boundary $\alpha$ of $D$ is piecewise geodesic and has all inner angle $A_i>\pi$ and by the Gauss-Bonnet theorem, $\area(D)=\sum_{i=1}^n(\pi-A_i)-2\pi<0$, which is a contradiction. Hence $\alpha$ is essential in $X$.

Let $\gamma$ be the unique closed geodesic in $X$ in the homotopy class of $\alpha$. Suppose that $\alpha\cap\gamma\neq\emptyset$. For the universal covering $p:\bbd\to X$, as illustrated in Figure \ref{fig:geodesic_intersecting_boundary},
\begin{figure}[htbp]
\centering
\includegraphics[width=.8\textwidth]{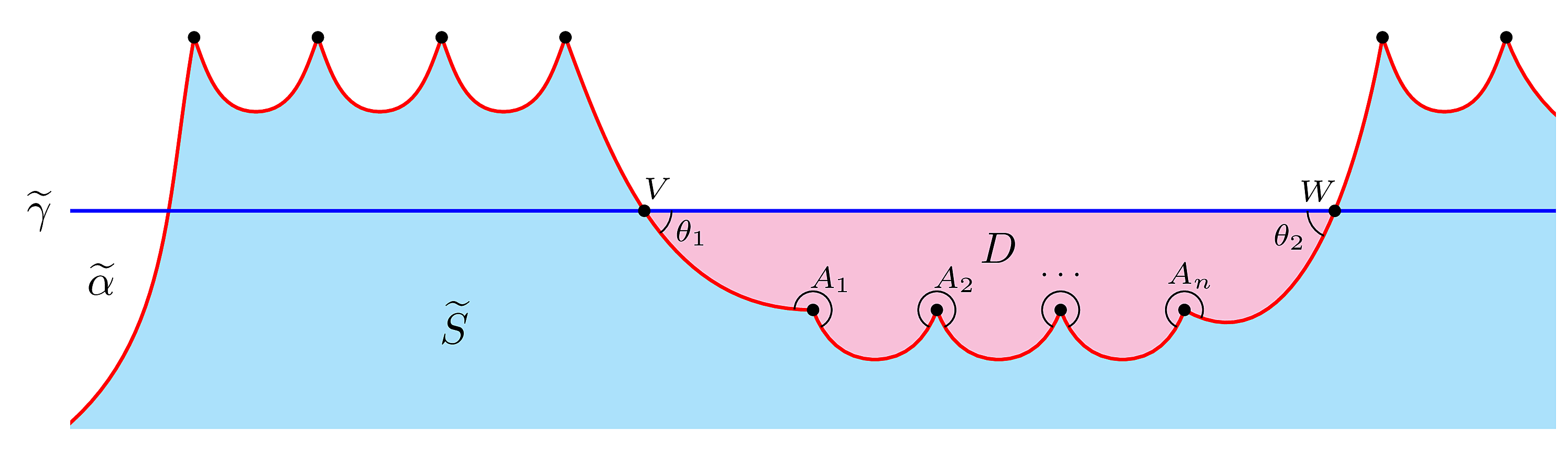}
\caption{$\widetilde{\gamma}$ intersects $\widetilde{\alpha}$}
\label{fig:geodesic_intersecting_boundary}
\end{figure}
a lift $\widetilde{\alpha}\subset\bbd$ of $\alpha$ intersects certain lift $\widetilde{\gamma}$ of $\gamma$. Let $\widetilde{S}$ be a lift of $S$ with $\widetilde{\alpha}$ as a boundary component. 
Since $S$ is convex, $\widetilde{S}$ has inner angles less than $\pi$ for vertices on $\widetilde{\alpha}$.   The deck transformation $g$ corresponding to $\class{\alpha}\in\pi_1(\Sigma)$ gives infinitely many intersection points of $\widetilde\alpha$ and $\widetilde\gamma$. Some adjacent two intersection points gives a polygon $D$ with $2$ vertices $V$ and $W$ on $\widetilde\gamma$, and $n$ vertices on $\widetilde{\alpha}$ (other than $V$ and $W$) with inner angles $A_1,A_2,\cdots,A_n>\pi$. $n$ cannot be zero since any two geodesics do not bound a bigon. Let $\theta_1$ and $\theta_2$ be the inner angles at $V$ and $W$ respectively. By the Gauss-Bonnett theorem, we have $\area(D)=(\pi-\theta_1)+(\pi-\theta_2)+\sum_{i=1}^n(\pi-A_i)-2\pi<0$. Hence $\alpha$ and $\gamma$ are disjoint.

Since $\alpha$ and $\gamma$ are homotopic and disjoint, they cobound an annulus $A$. As in Figure \ref{fig:annulus_alpha_gamma},
\begin{figure}[htbp]
\centering
\includegraphics[width=.6\textwidth]{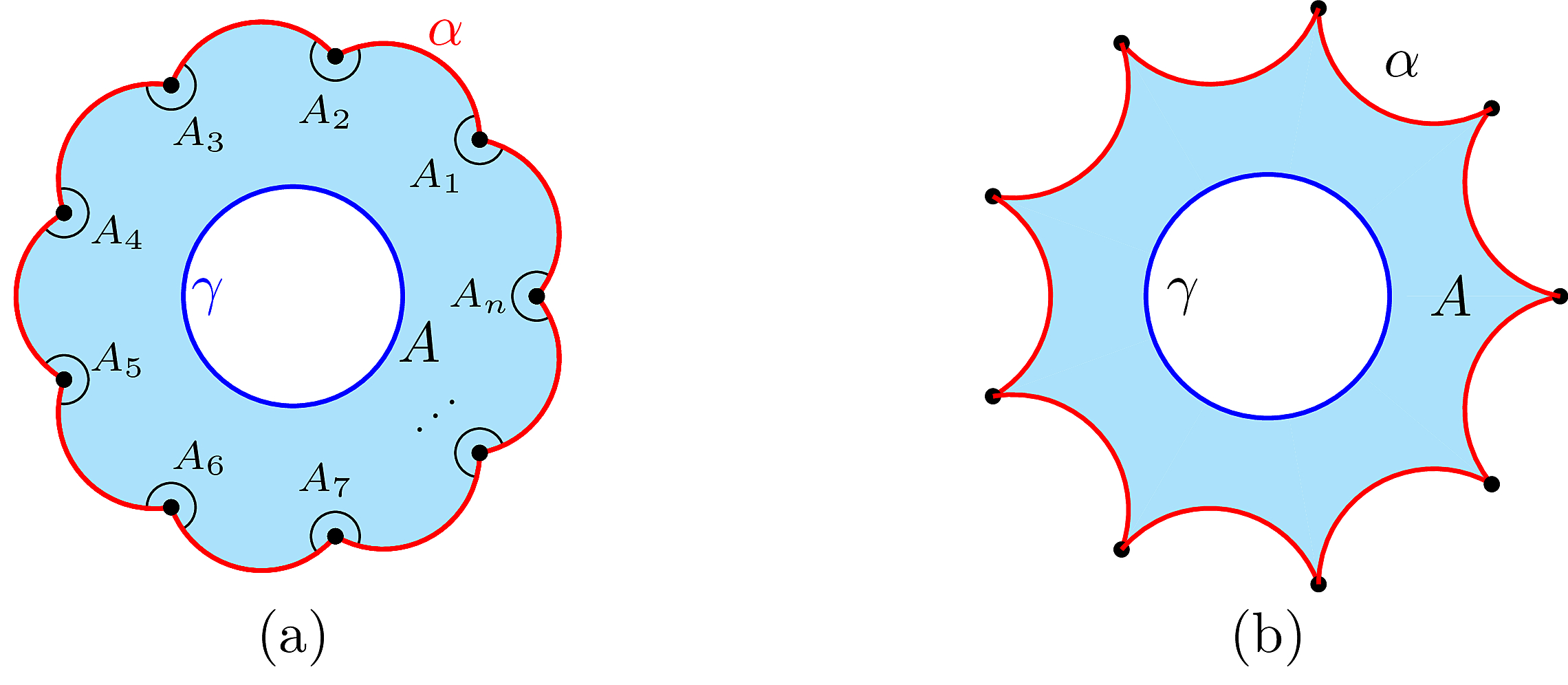}
\caption{An annulus between $\alpha$ and $\gamma$. Note that $\alpha$ does not bound a disk.}
\label{fig:annulus_alpha_gamma}
\end{figure}
either $A$ has inner angles bigger than $\pi$ for all vertices on $\alpha$ (Figure \ref{fig:annulus_alpha_gamma}(a)), or $A$ has inner angles less than $\pi$ for all vertices on $\alpha$ (Figure \ref{fig:annulus_alpha_gamma}(b)). In the first case, the Gauss-Bonnet Theorem implies that $\area(A) = \sum_{i=1}^n(\pi-\alpha_i)<0$. We get a contradiction since $\area(A) > 0$ and $\pi-\alpha_i < 0$ for $i = 1, 2, ...,n$. It follows that $A$ has inner angles (as Figure \ref{fig:annulus_alpha_gamma}(b)) and hence $\gamma\subset S$.
\end{proof}

\subsection{A simple case}
In this subsection, we prove Theorem \ref{thm:filling_graph} in the \emph{simple case} that $\Gamma$ consists of simple closed geodesics and $\Gamma$ has only double intersection points. 

\begin{proof}[Proof of Theorem \ref{thm:filling_graph} in the simple case]

$\Gamma$ is a 4-valent graph on $X$. Take a simple closed geodesic $c\in\Gamma$ and let $G_1=c$. $G_1$ is clearly convex since $X\setminus G_1$ has no vertices.

In $\Gamma\setminus G_1$, there are proper arcs with ends on $G_1$ since otherwise $\Gamma$ cannot be filling. Take an arc $s_1$ in $\Gamma\setminus G_1$ and let $G_2=G_1\cup s_1$. $G_2$ has a single vertex as Figure \ref{fig:subgraph_convexity}(b) if the two ends of $s_1$ coincide, and has two vertices as Figure \ref{fig:subgraph_convexity}(a) otherwise. Hence $G_2$ is convex. 

If $G_2$ is filling, let $G=G_2$ and the expected filling graph is constructed. Otherwise $G_2$ is convex and not filling. Then $X\setminus G_2$ has a non-contractible component $S$ and by Lemma \ref{lemma:noncontractible_component_geodesic}, there is a simple closed geodesic $\gamma$ cobounding an annulus $A$ with some boundary component $\alpha$ of $S$. Since $\Gamma$ is filling, there exists some geodesic $\beta\subset\Gamma$ intersecting $\gamma$ transversely. $\beta$ cannot bound a bigon in $A$ with $\gamma$, and $\beta$ has to intersect $\alpha$. Hence $\Gamma\setminus G_2$ has an arc $s_2\subset\beta$ with both ends on $G_2$. $s_2$ is an embedded arc viewed in $X\setminus G_2$. As in Figure \ref{fig:s2_not_boundary_parallel}, 
\begin{figure}[htbp]
\centering
\includegraphics[width=.55\textwidth]{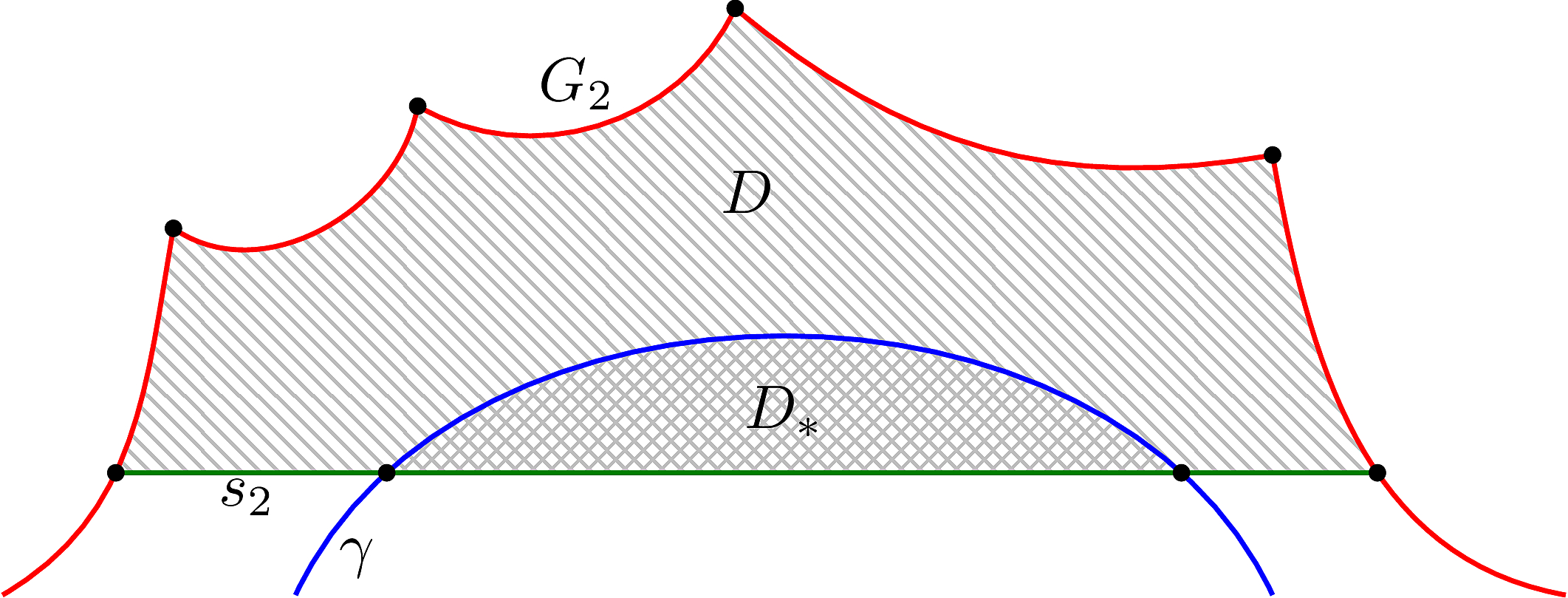}
\caption{If $s_2$ and $G_2$ bound the disk $D$, then $s_2$ and $\gamma$ will bound the bigon $D_*$ since $\gamma$ and $G_2$ are disjoint, where $D_*$ is part of $D$.}
\label{fig:s2_not_boundary_parallel}
\end{figure}
if $s_2$ cobounds a disk $D$ with $G_2$, then $D$ is embedded since $s_2$ is simple, and $s_2$ and $\gamma$ cobound a bigon $D_*$ which is impossible. Hence $s_2$ is \emph{essential} in the sense that it does not co-bound a disk with $G_2$. Let $G_3=G_2\cup s_2$. Since $G_2$ is convex and $\Gamma$ is $4$-valent, the new vertices of $G_3$ are either $3$-valent or $4$-valent and hence $G_3$ is convex.

If $G_3$ is filling, let $G=G_3$ and the expected filling graph is constructed; 
otherwise we repeat the above argument for $G_3$ to get $G_4$, and etc. Since $\Gamma$ is a finite filling graph, the repeating process will eventually stop to get a filling graph $G_n$. Let $G=G_n$. $X\setminus G$ consists of olygons $D_1, D_2, ..., D_k$.

It remains to show $G$ satisfies \eqref{eqn:subgraph_length_decreasing}, \eqref{eqn:subgraph_triangle_free} and \eqref{eqn:subgraph_complexity}. \eqref{eqn:subgraph_length_decreasing} holds since $G$ is a subgraph of $\Gamma$.

The filling graph $G$ is obtained from a simple closed geodesic by adding essential proper arcs one by one. Since the proper arc is essential, a polygon can be generated only when the essential arc cuts an annulus to get a polygon. The annulus cannot be bounded by two simple closed geodesics and it has at least 1 vertex. The essential arc produces $4$ new vertices and the resulting polygon has at least 5 vertices. Therefore every polygon $D_i$ has at least $5$ edges and \eqref{eqn:subgraph_triangle_free} holds. 

When we add an arc to $G$, we get two pairs of complementary inner angles, it follows that the average of the inner angles of $D_1, D_2, ..., D_k$ is $\frac\pi2$. Suppose that $D_1,\cdots,D_k$ are $m_1$-,...,$m_k$-gons respectively and the inner angles of $D_i$ are $A^i_1,\cdots,A^i_{m_i}$ ($i=1,\cdots,k$). Then the area of $D_i$ is $\area(D_i)=\sum_{j=1}^{m_i}(\pi-A^i_j)-2\pi$. 
Since the average of the inner angles is $\pi/2$, we have
$$4\pi(g-1)=\area(X)=\sum_{k=1}^k\area(D_i)=\sum_{i=1}^k\sum_{j=1}^{m_i}(\pi-A^i_j)-2k\pi=\sum_{i=1}^k \bracket{\frac{m_i\pi}2-2\pi}$$
and \eqref{eqn:subgraph_complexity} follows.
\end{proof}

\subsection{General case}

For general filling multi-geodesics, there can be multiple intersection points and the geodesics may have self-intersections. A filling subgraph of $\Gamma$ may fail for \eqref{eqn:subgraph_triangle_free} or \eqref{eqn:subgraph_complexity} and the graph $\Gamma$ need to be modified. The following notion of \emph{cutting curves} plays the same role as the beginning simple closed geodesic and essential arcs in the simple case.

Let $X$ be a closed hyperbolic surface and let $\Theta$ be a filling graph and $G$ be a connected subgraph of $\Theta$ which is possibly empty. When $G=\emptyset$ is empty, a \emph{cutting curve} for the pair $(\Theta, G)$ is a geodesic $\beta:[a,b]\to X$ in $\Theta$ such that
\begin{enumerate}
\item $\beta$ is a simple closed curve; or
\item $\beta$ is injective on $(a,b)$, but not injective on neither $(a,b]$ nor $[a,b)$.
\end{enumerate}
There are exactly four types of cutting curves for $(\Theta,\emptyset)$ as in Figure \ref{fig:cutting_curves_empty_subgraph}.
\begin{figure}[htbp]
\centering
\includegraphics[width=.9\textwidth]{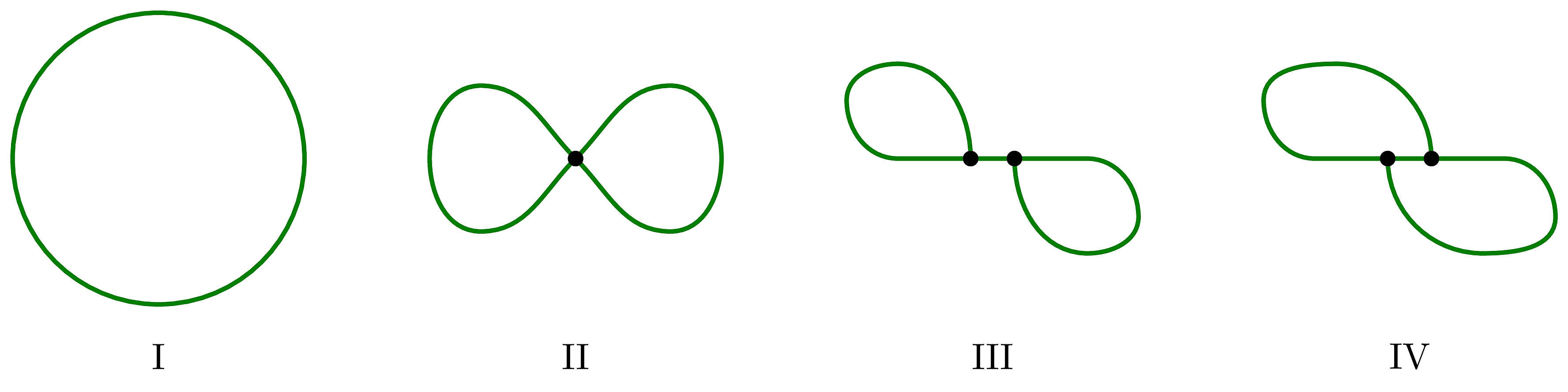}
\caption{Cutting curves for an empty subgraph.}
\label{fig:cutting_curves_empty_subgraph}
\end{figure}
When $G$ is nonempty, a \emph{cutting curve} for $(\Theta,G)$ is a geodesic $\beta:[a,b]\to X$ in $\Theta$ such that $\beta:(a,b)\to X\setminus G$ is injective, $\beta(a)\in G$ and $\beta(b)\in G\cup \beta((a,b))$. There are exactly two types of cutting curves when $G$ is nonempty as in Figure \ref{fig:cutting_curves_nonempty_subgraph}.
\begin{figure}[htbp]
\centering
\includegraphics[width=.7\textwidth]{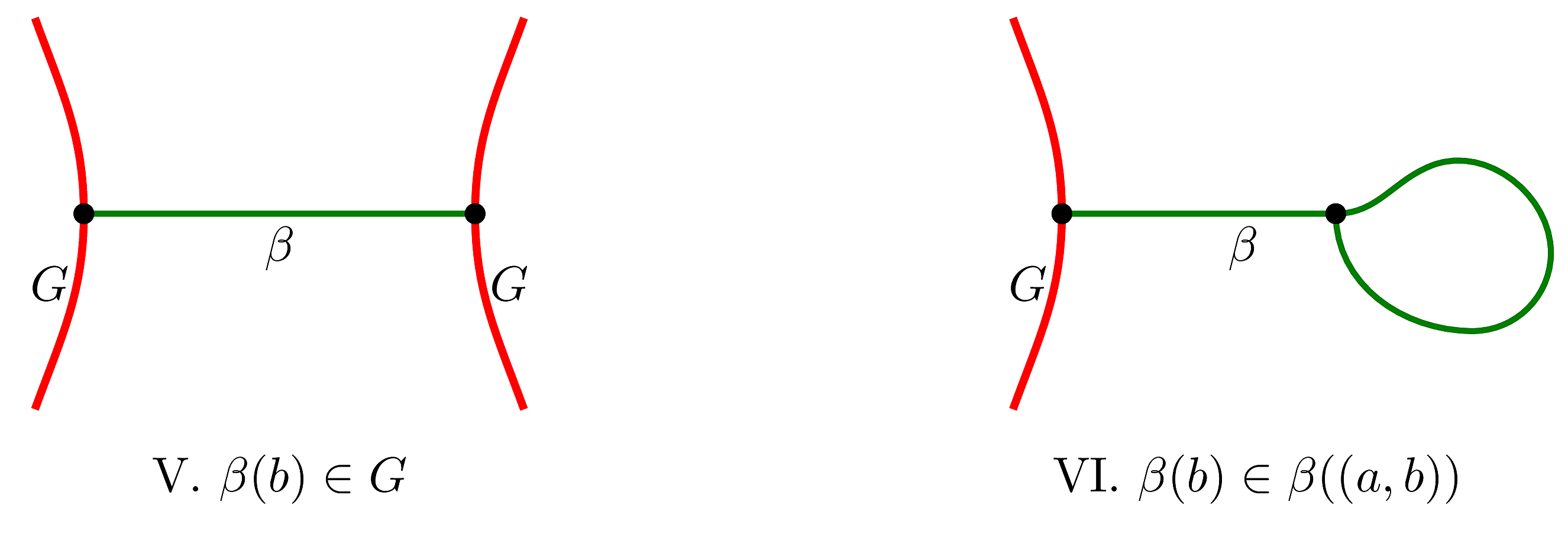}
\caption{Cutting curves for a nonempty subgraph.}
\label{fig:cutting_curves_nonempty_subgraph}
\end{figure}
By abuse of notation, we do not distinguish a cutting curve from its image. A cutting curve for $(\Theta, G)$ is \emph{$\partial$-parallel} if cobounds a disk with $G$, and \emph{essential} otherwise. A $\partial$-parallel cutting curve of type VI can be found in Figure \ref{figure:s2_rho_boundary_parallel_A}.

\begin{proof}[Proof of Theorem \ref{thm:filling_graph}]
We regard $\Gamma$ as a graph on $X$ and the intersection points of $\Gamma$ as vertices. $\Gamma$ is not $4$-valent in general. Let $\Theta_1=\Gamma$ and $G_1=\emptyset$.

Pick any open embedded geodesic segment $\beta:(s,t)\to X$ of $\Gamma$, and extend it along the increasing direction until either we get a simple closed curve, or we reach some point on $\beta$. In the latter case, we extend the curve in the decreasing direction until we reach some point on $\beta$. In both cases, we get a cutting curve $\beta_1:[a,b]\to X$ for $(\Theta_1,G_1)$. Let $\Theta_2=\Theta_1$ and $G_2=\beta_1$.  $G_2$ is convex. 

\begin{claim}\label{claim:no_essential_cutting_implies_filling}
If $G_2$ is not filling, then $(\Theta_2,G_2)$ has a non-$\partial$-parallel proper geodesic arc.
\end{claim}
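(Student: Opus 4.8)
The plan is to prove the contrapositive: assume every proper geodesic arc of $\Theta_2 \setminus G_2$ that has both endpoints on $G_2$ (or the relevant endpoint configuration of a cutting curve) is $\partial$-parallel, and deduce that $G_2$ is filling, i.e.\ every component of $X \setminus G_2$ is a disk. Suppose not, so some component $S$ of $X \setminus G_2$ is not contractible. Since $G_2 = \beta_1$ is convex, $X \setminus G_2$ is a convex hyperbolic surface, so Lemma \ref{lemma:noncontractible_component_geodesic} applies: a boundary component $\alpha$ of $S$ cobounds an annulus $A$ with a simple closed geodesic $\gamma \subset S$, where $A$ has all inner angles on $\alpha$ less than $\pi$.

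Next I would use that $\Gamma$ (hence $\Theta_2$) is filling to force $\Theta_2$ to meet $\gamma$ transversally in some geodesic $\beta \subset \Theta_2$: if $\Theta_2$ were disjoint from $\gamma$, then $\gamma$ would lie in a single complementary region, which is impossible for a filling graph (a filling graph cuts $X$ into disks, and $\gamma$ is essential). As in the simple case, $\beta$ cannot bound a bigon with $\gamma$ inside $A$, so $\beta$ must actually cross $\alpha$, and therefore $\Theta_2 \setminus G_2$ contains a proper geodesic sub-arc $s \subset \beta$ of the appropriate cutting-curve type (type V or VI) with endpoint(s) on $G_2$. By the standing assumption, $s$ is $\partial$-parallel, i.e.\ it cobounds a disk $D$ with $G_2$. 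Now I would run the same bigon argument as in Figure \ref{fig:s2_not_boundary_parallel}: since $\gamma$ is disjoint from $G_2 \subset X \setminus S^{c}$... more precisely since $\gamma$ lies in the interior of $S$ and is disjoint from $G_2$, the portion of $D$ cut off near $\alpha$ would produce a bigon between $s$ and $\gamma$ (or an embedded disk bounded by an arc of $\gamma$ and an arc of $s$), contradicting that two distinct geodesics bound no bigon, unless $D$ contains all of $\gamma$ — but $D$ is a disk and $\gamma$ is essential, so $\gamma$ cannot lie in $D$. This contradiction shows $S$ cannot be non-contractible, so $G_2$ is filling.

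I expect the main obstacle to be the bookkeeping around the two cutting-curve types for a nonempty subgraph (Figure \ref{fig:cutting_curves_nonempty_subgraph}) and making sure the extracted sub-arc $s$ of $\beta \cap (X\setminus G_2)$ genuinely is a cutting curve for $(\Theta_2, G_2)$ — in particular handling the case where $\beta$ re-enters $A$ several times, so that one must pick an \emph{innermost} sub-arc, and the case where one or both endpoints of $s$ land on $\beta$ itself rather than on the original $\beta_1$. A secondary subtlety is that $\partial$-parallelism is defined relative to $G_2$, whereas the bigon one wants is between $s$ and $\gamma$; the figure referenced (Figure \ref{fig:s2_not_boundary_parallel}) already packages exactly this deduction, so I would cite it, but I would want to state carefully why the disk $D$ is embedded (because $s$ is embedded in $X \setminus G_2$, being a sub-arc of a geodesic crossing $A$ once between consecutive intersections with $\alpha$) and why the innermost choice guarantees $\gamma$ meets $\partial D$ in an arc rather than in a closed curve. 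Once these points are pinned down, the Gauss–Bonnet/no-bigon contradiction closes the argument exactly as in the simple case.
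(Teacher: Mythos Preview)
Your strategy is the paper's strategy: apply Lemma~\ref{lemma:noncontractible_component_geodesic} to get $\gamma$, use that $\Theta_2$ is filling to produce a geodesic $\beta$ in $\Theta_2$ crossing $\gamma$, then obtain a bigon contradiction if $\beta$ were $\partial$-parallel. Two points where you diverge unnecessarily from the paper and create work for yourself:

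\textbf{(1)} The claim asks only for a non-$\partial$-parallel \emph{proper geodesic arc}, not a cutting curve. The paper simply takes $\beta$ to be the full component of $\Theta_2\setminus G_2$ (a proper, generally immersed, arc with both ends on $G_2$) and shows \emph{that} is not $\partial$-parallel. The extraction of a cutting curve happens only afterwards, \emph{after} replacing $\beta$ by its shortest homotopic representative $\beta'$. So all your worries about types V/VI, innermost sub-arcs, and endpoints landing on $\beta$ itself are premature here.

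\textbf{(2)} Your claim that ``$D$ is embedded because $s$ is embedded'' is not justified and the paper does not use it. Since $\beta$ can be immersed, the disk $D$ it cobounds with $G_2$ and the resulting bigon $D_*$ between $\beta$ and $\gamma$ are in general only \emph{immersed} (this is exactly Figure~\ref{fig:s2_not_boundary_parallel_2}). The contradiction is obtained by passing to the universal cover, where a lift of $\beta$ and a lift of $\gamma$ would bound an honest bigon --- impossible for distinct geodesics in $\mathbb{D}$. Once you allow immersed $D$ and argue in the cover, the innermost/embeddedness bookkeeping evaporates.
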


Since $G_2$ is not filling, $X\setminus G_2$ has a non-contractible component $S$ and by Lemma \ref{lemma:noncontractible_component_geodesic}, there is a simple closed geodesic $\gamma$ disjoint from $G_2$, such that $\gamma$ and $G_2$ cobound an annulus $A$ and $A$ has inner angles $<\pi$ for all vertices. Let $\alpha$ be the other boundary of $A$. Since $\Theta_2$ is filling, there exists some geodesic $\beta$ in $X\setminus G_2$ intersecting $\gamma$ transversely. $\beta$ cannot bound a bigon in $A$ with $\gamma$, and $\beta$ has to intersect $G_2$. Hence $\beta$ cannot be a closed geodesic and $\beta$ is a geodesic arc in $\Gamma\setminus G_2$ with both ends on $G_2$. $\beta$ is in general an immersed arc. As in Figure \ref{fig:s2_not_boundary_parallel} (the embedded case) and \ref{fig:s2_not_boundary_parallel_2} (the immersed case),
\begin{figure}[htbp]
\centering
\subfloat{\includegraphics[width=0.32\textwidth]{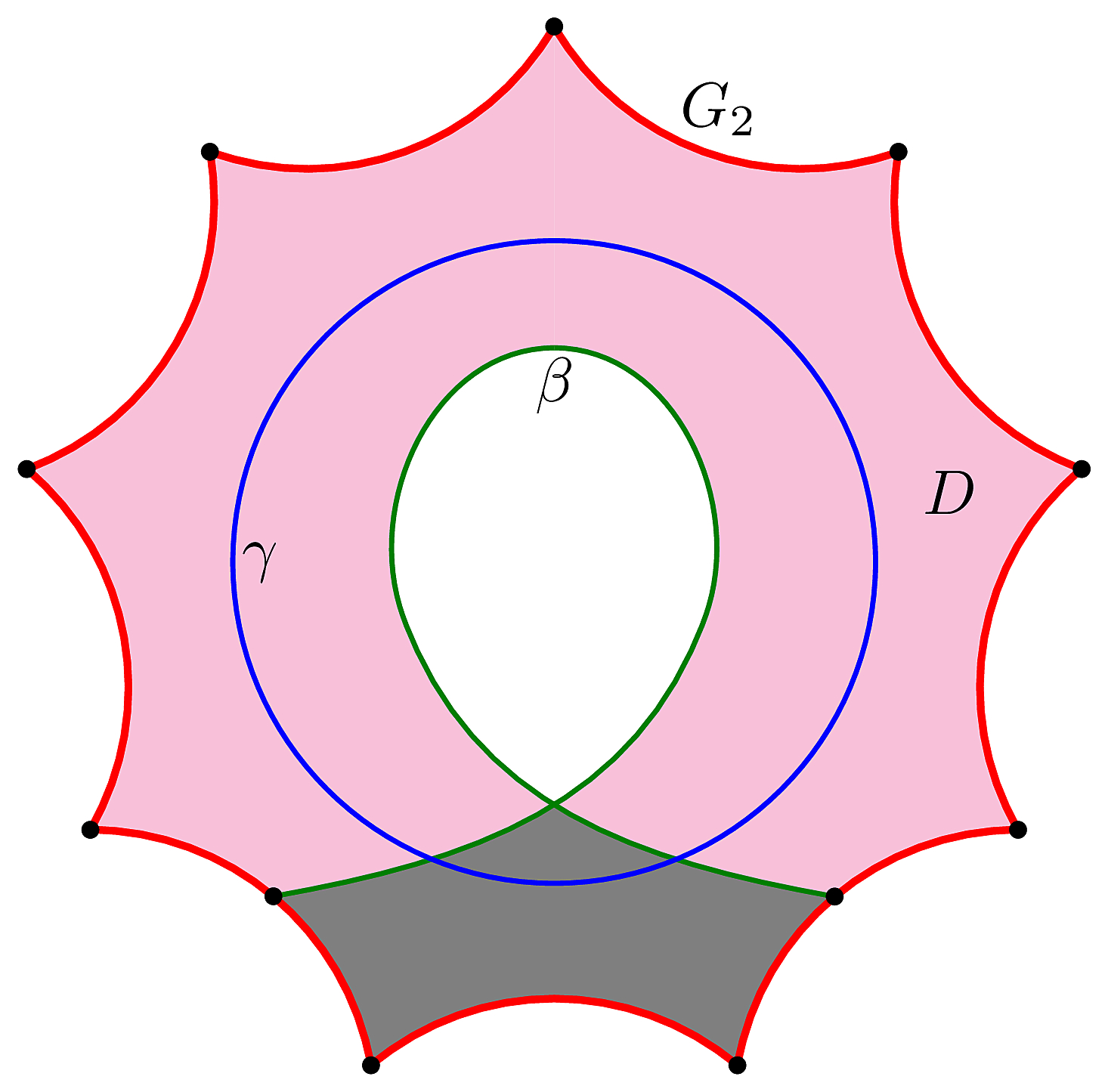}}
\hspace{20pt}
\subfloat{\includegraphics[width=0.32\textwidth]{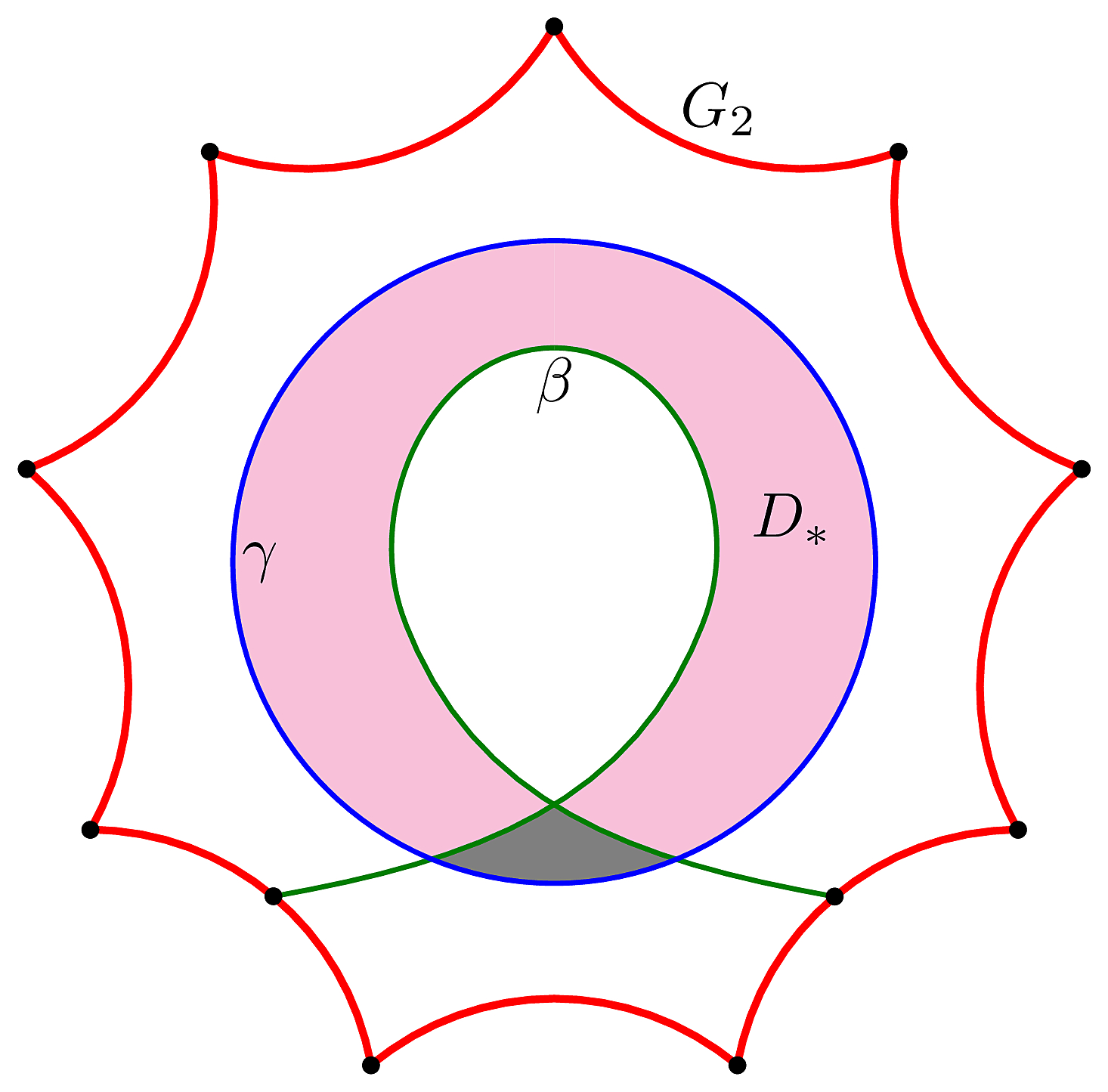}}
\caption{If $s_2$ and $G_2$ bound the immersed disk $D$ (left), then $s_2$ and $\gamma$ will bound the immersed bigon $D_*$ (right). The deeply shadowed regions are used twice.}
\label{fig:s2_not_boundary_parallel_2}
\end{figure}
if $\beta$ is $\partial$-parallel, then $\beta$ cobounds an (immersed) disk $D$ with $G_2$ and hence $\beta$ and $\gamma$ cobound an (immersed) bigon $D_*$ since $\beta\cap\gamma\neq\emptyset$ and $\gamma\cap G_2=\emptyset$. (This is more clear if viewed in the universal cover). It follows that any $\partial$-parallel proper arc for $(\Theta_2,G_2)$ is disjoint from $\gamma$ and $\beta$ is not $\partial$-parallel. Claim \ref{claim:no_essential_cutting_implies_filling} follows.

Since $\Gamma$ may have multiple intersection points, the endpoints of $\beta$ can be the vertices of $G_2$, and the two endpoints of $\beta$ can coincide as a proper arc in $X\setminus G_2$.
Let $\beta^\prime$ be the shortest proper geodesic arc in $X\setminus G_2$ that is homotopic to $\beta$ relative to $\partial(X\setminus G_2)$. The existence of $\beta^\prime$ is guaranteed by the Arzela-Ascoli theorem. $\beta^\prime$ is not necessarily unique. We let $\beta^\prime=\beta$ if $\ell(\beta)=\ell(\beta^\prime)$. Since $\beta^\prime$ is shortest, $\beta^\prime$ is perpendicular to $G_2$. Thus the two possible anomalies for $\beta$ can be avoided: the two endpoints of $\beta^\prime$ are distinct when viewed as a proper arc in $X\setminus G_2$, and disjoint from the vertices of $G_2$.
$\Theta_2\setminus G_2$ consists of proper geodesic arcs and closed geodesics in $X\setminus G_2$. Since $G_2$ is convex, every component of $X\setminus(\Theta_2\setminus \beta)$ is convex.

Let $\Theta_3=(\Theta_2\setminus\beta)\cup \beta^\prime$. Since $\ell\bracket{\beta^\prime}\leqslant\ell\bracket{\beta}$, we have $\ell(\Theta_3)\leqslant\ell(\Theta_2)$.

We show that $\Theta_3$ is filling. Suppose that $\Theta_3$ is not filling, then there exists an essential simple closed curve $\gamma$ in $X$ so that $\gamma\cap\Theta_3=\emptyset$. Since $\Theta_2$ is filling, $\gamma\cap\Theta_2\neq\emptyset$ and hence $\gamma\cap\beta\neq\emptyset$. For the universal covering $p:\bbd\to X$, a lift $\widetilde{\beta}$ in $\bbd$ of $\beta$ and some lift $\widetilde{\beta^\prime}$ in $\bbd$ of $\beta^\prime$ look like Figure \ref{figure:essential_arc_homotopy} $(A)$ or $(B)$, where $\Xi=p^{-1}\bracket{\Theta_2\setminus\beta}=p^{-1}\bracket{\Theta_3\setminus\beta^\prime}$.
\begin{figure}[htbp]
\centering
\includegraphics[width=0.82\textwidth]{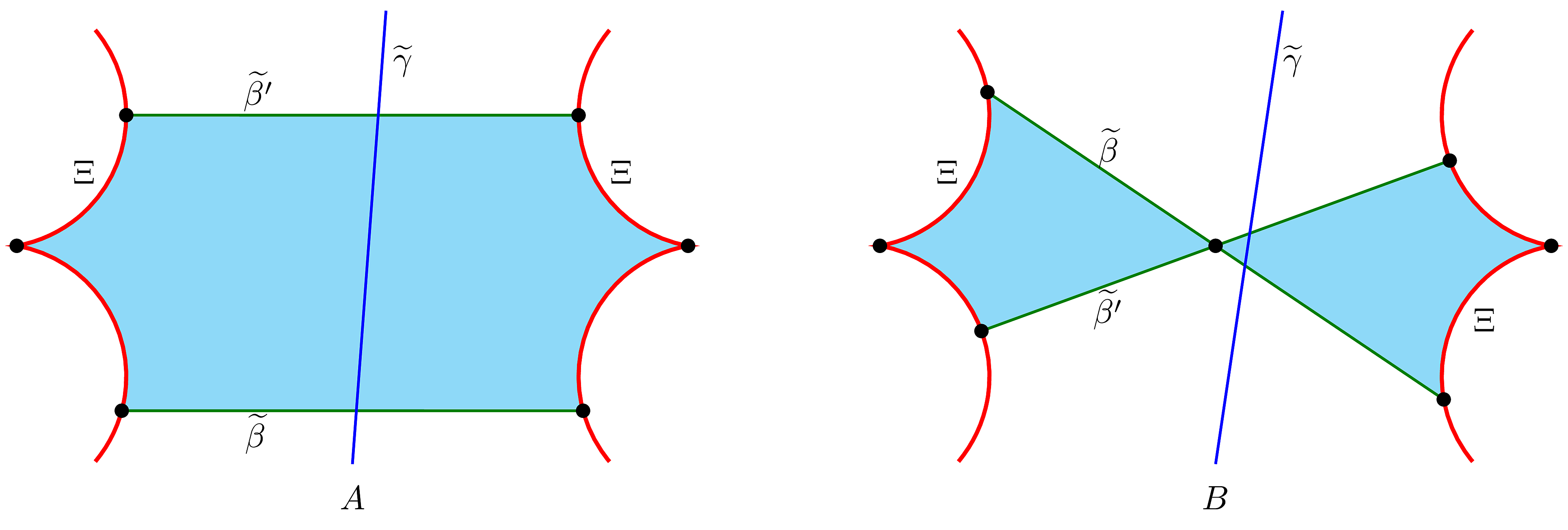}
\caption{The intersection of $\beta$ and $\gamma$ in the universal cover.}
\label{figure:essential_arc_homotopy}
\end{figure}
Since $\gamma\cap\beta\neq\emptyset$, for some lift $\widetilde{\gamma}$ of $\gamma$, we have $\widetilde{\gamma}\cap\widetilde{\beta}\neq\emptyset$ and then $\widetilde{\gamma}\cap\widetilde{\beta^\prime}\neq\emptyset$. Hence $\gamma\cap\beta^\prime\neq\emptyset$ and $\gamma\cap\Theta_3\neq\emptyset$. Therefore $\Theta_3$ is filling.

Starting from one end, the proper geodesic arc $\beta^\prime$ gives a cutting curve $s_2$.

We show that $s_2$ is essential. 
If $\beta^\prime$ is simple, then $s_2=\beta^\prime$ is of type V and essential. If $\beta^\prime$ is not simple, then $s_2$ is of type VI. If $s_2$ is not essential, then $s_2$ are $\partial$-parallel as in Figure \ref{figure:s2_rho_boundary_parallel_A}.
\begin{figure}[htbp]
\centering
\includegraphics[width=.28\textwidth]{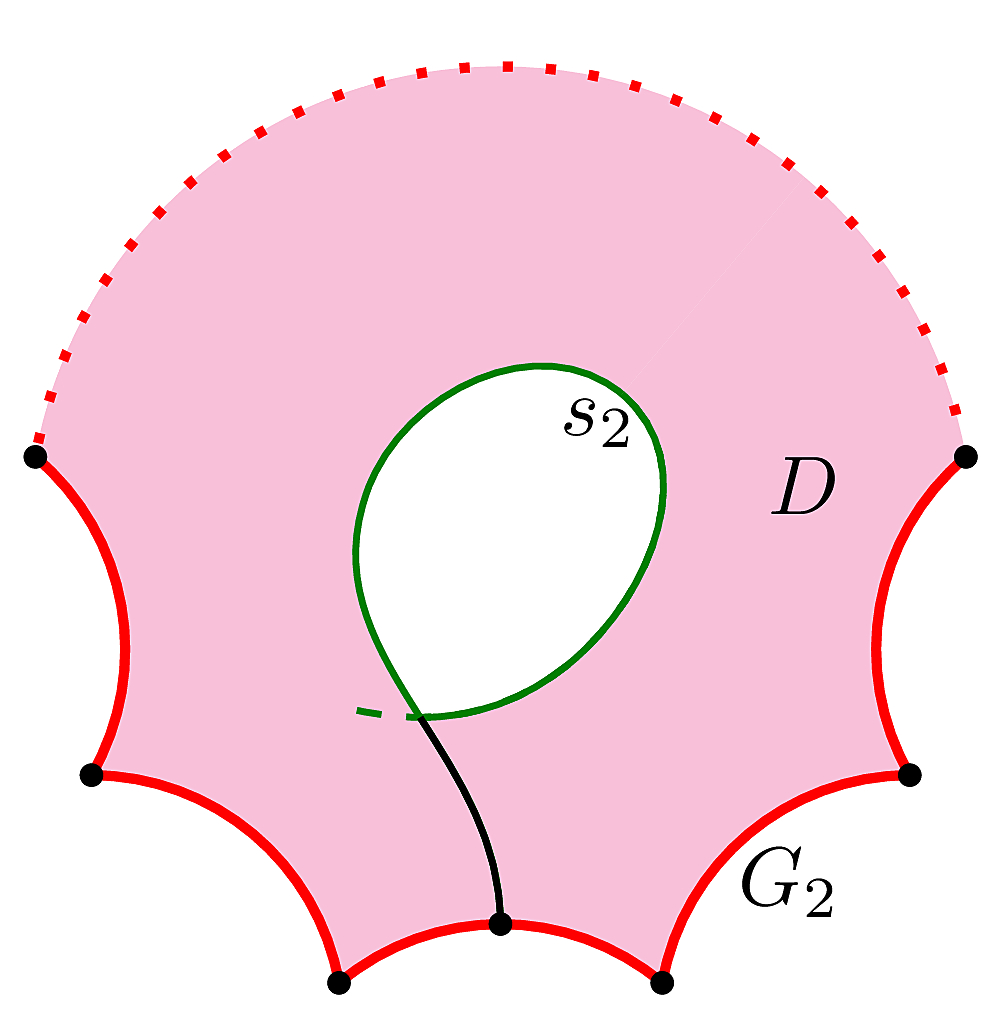}
\caption{A $\partial$-parallel cutting curve of type VI. The green part of $s_2$ will be referred to as the ``circle" part.}
\label{figure:s2_rho_boundary_parallel_A}
\end{figure}
$s_2$ is part of the essential geodesic proper arc $\beta^\prime$ for $X\setminus G_2$. 
As in Figure \ref{figure:s2_rho_boundary_parallel_B},
\begin{figure}[htbp]
\centering
\includegraphics[width=.9\textwidth]{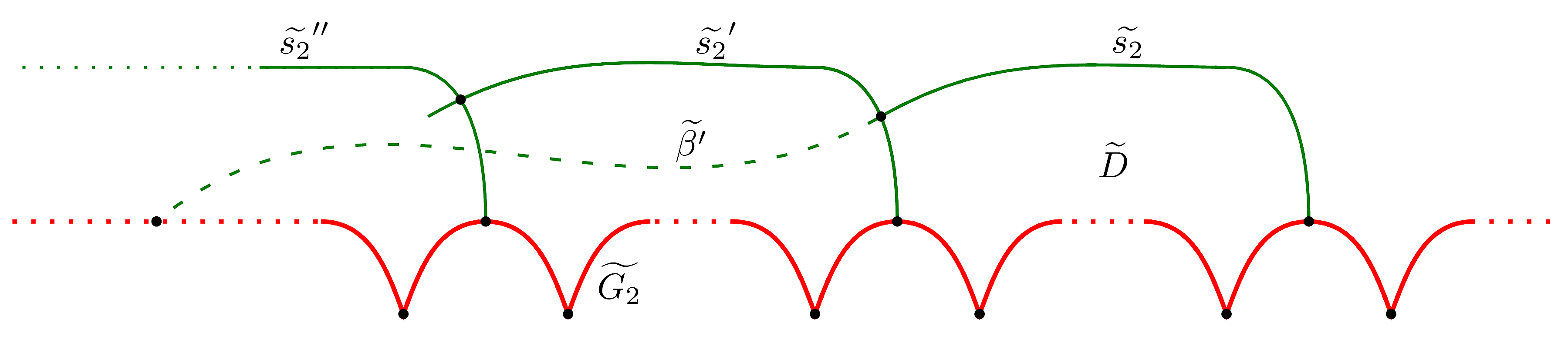}
\caption{A $\partial$-parallel cutting curve in $\bbd$.}
\label{figure:s2_rho_boundary_parallel_B}
\end{figure}
in the universal cover $\bbd$, a lift $\widetilde{s_2}$ of $s_2$ is the starting part of a lift $\widetilde{\beta^\prime}$ of $\beta^\prime$. The rest part of $\widetilde{\beta^\prime}$ will not intersect with any lift of the ``circle" part of $s_2$ since there will be a bigon between different lifts of $\beta^\prime$. Thus $\widetilde{\beta^\prime}$ and $\widetilde{G_2}$ will cobound a disk and $\beta^\prime$ and $G_2$ cobound an immersed disk, which contradicts to the essencity of $\beta^\prime$. Hence $s_2$ is essential.

Let $G_3=G_2\cup s_2$. Then $G_3$ is a convex subgraph of $\Theta_3$.

If $G_3$ is filling, let $G=G_3$ and the expected filling graph is constructed; otherwise we repeat the above argument for $(\Theta_3,G_3)$ to get $(\Theta_4, G_4)$, and etc. Since $\Gamma$ is a finite filling graph, the repeating process will eventually stop to get a filling graph $G_n$. Let $G=G_n$ and suppose that $X\setminus G$ consists of polygons $D_1, D_2, ..., D_k$.

It remains to show $G$ satisfies \eqref{eqn:subgraph_length_decreasing}, \eqref{eqn:subgraph_triangle_free} and \eqref{eqn:subgraph_complexity}. \eqref{eqn:subgraph_length_decreasing} holds since 
$$\ell(G_n)\leqslant \ell(\Theta_n)\leqslant\ell(\Theta_{n-1})\leqslant\cdots\leqslant\ell(\Theta_1)=\ell(\Gamma).$$

To show \eqref{eqn:subgraph_triangle_free}, observe that a polygon is produced only when an essential cutting curve is added to $G$. A cutting curve of type I, II, III or IV is added only once and no polygon is produced. For an essential cutting curve $s$ of type VI, since $s$ does not bound a disk with $G_2$ and the ``circle" part is not contractible, no polygon is produced. Hence a polygon is produced only when an essential cutting curve $s$ of type V is added. In this situation, since $s$ is essential, a polygon is produced only when $s$ cuts an annulus to get a polygon. The annulus cannot be bounded by two simple closed geodesics and it has at least 1 vertex. The cutting curve produces $4$ new vertices, and the new polygon has at least $5$ vertices. Hence every polygon $D_i$ has at least $5$ edges and \eqref{eqn:subgraph_triangle_free} holds.

The filling graph $G$ is obtained by adding essential cutting curves one by one. Two pairs of complementary inner angles are obtained when adding an essential cutting curve of II, III, IV, V, VI, and it follows that the average of the inner angles of $D_1, D_2, ..., D_k$ is $\frac\pi2$. Suppose that $D_1,\cdots,D_k$ are $m_1$-,...,$m_k$-gons respectively and the inner angles of $D_i$ are $A^i_1,\cdots,A^i_{m_i}$ ($i=1,\cdots,k$). Then the area of $D_i$ is $\area(D_i)=\sum_{j=1}^{m_i}(\pi-A^i_j)-2\pi$. 
Since the average of the inner angles is $\pi/2$, we have
$$4\pi(g-1)=\area(X)=\sum_{k=1}^k\area(D_i)=\sum_{i=1}^k\sum_{j=1}^{m_i}(\pi-A^i_j)-2k\pi=\sum_{i=1}^k \bracket{\frac{m_i\pi}2-2\pi}$$
and \eqref{eqn:subgraph_complexity} follows.
\end{proof}

\section{An isoperimetric inequality}\label{sec:iso}

In this section, we provide an isoperimetric inequality, which is a generalization of Sanki-Vadnere's \cite[Theorem 1.1]{sanki2021conjecture}. 

\begin{thm}\label{thm:isoperim}
Let $D_1,...,D_k$ be hyperbolic $m_i$-gons with $m_i\geqslant 4$. Let $D$ be a regular $m$-gon with inner angle $\theta\geqslant\frac\pi2$. If 
\begin{enumerate}
\item\label{item:thm_isoperim_gauss} $m-4 = \sum_{i=1}^k m_i -4k$;
\item\label{item:thm_isoperim_area} $\area(D) = \sum_{i=1}^k \area(D_i)$, 
\end{enumerate}
then 
$$\perim(D)\leqslant \sum_{i=1}^k \perim(D_i).$$
The equality holds if and only if $k=1$ and $D_1$ is a regular $m$-gon with inner angle $\theta\geqslant\frac\pi2$.
\end{thm}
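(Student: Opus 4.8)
The plan is to reduce everything to the single‐polygon case ($k=1$) and then prove a sharp isoperimetric statement for one polygon with at least four edges. For the reduction step I would argue that if $k\geqslant 2$, one can replace the family $D_1,\dots,D_k$ by a single polygon with strictly smaller perimeter while preserving the two constraints \eqref{item:thm_isoperim_gauss} (total edge number, i.e.\ total area via Gauss--Bonnet, since $\area(D_i)=(m_i-2)\pi-\sum_j A^i_j$ means the edge-count constraint and the area constraint together pin down the total interior angle) and \eqref{item:thm_isoperim_area}. Concretely, given two polygons $D_i,D_j$ one glues them along an edge (or a suitable geodesic arc), or one ``merges'' them into an $(m_i+m_j-2)$-gon, checking that the combined object still has the right number of effective edges and the same total area; the new boundary is shorter than $\perim(D_i)+\perim(D_j)$ because one discards the two glued edges (or, in the hyperbolic setting, because concatenating and then straightening geodesic sides can only decrease length). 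Iterating collapses $k$ to $1$, so it suffices to treat $k=1$.

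For the single-polygon case I would first reduce to the \emph{regular} polygon. Fix $m\geqslant 4$ and fix the area $\area(D_1)=\mathcal A$ (equivalently the angle sum $\Sigma = (m-2)\pi - \mathcal A$, with each angle in $(0,\pi)$ so that $D_1$ is convex — note the hypothesis $m-4=m_1-4$ forces $m=m_1$ here). The claim becomes: among all hyperbolic $m$-gons of fixed area, the regular one minimizes perimeter, and the regular $m$-gon with the prescribed area has perimeter $\geqslant \perim(D)$ where $D$ is the reference regular $m$-gon — but since $D$ itself is a regular $m$-gon, and $\theta\geqslant\pi/2$ forces via the angle-sum/area relation that $\mathcal A \leqslant (m-4)\pi$, comparing the two regular $m$-gons of areas $\mathcal A$ and $\area(D)$ reduces to monotonicity of perimeter in area for the one-parameter family of regular $m$-gons. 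The perimeter-minimizing property of the regular polygon at fixed area I would prove by a symmetrization / variational argument: if $D_1$ is not regular, some two adjacent sides have unequal length or two angles differ, and a length-decreasing area-preserving deformation exists (this is where I would lean on the technique of Sanki--Vadnere, adapted from even-edge polygons to arbitrary $m\geqslant 4$). The ``best possible'' caveat ($m\geqslant 4$, failing at $m=3$, cf.\ Example~\ref{emp:triangle_fail}) signals that the variational move needs a free angle to adjust, which a triangle does not have.

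The main obstacle I expect is the merging/gluing step when $k\geqslant 2$: one must ensure that after combining two polygons the result is still an honest convex hyperbolic polygon with all interior angles $<\pi$ and with at least $4$ edges, so that the single-polygon case applies; degenerate configurations (a merged vertex acquiring angle $\geqslant\pi$, or the count dropping below $4$) have to be ruled out or handled by first performing an angle-balancing deformation within the family before gluing. A secondary technical point is making the variational argument in the single-polygon case fully rigorous in the hyperbolic metric — computing $\partial(\text{perimeter})/\partial(\text{angle})$ at fixed area and showing it has the right sign away from the regular configuration, and checking that the equality case forces $k=1$ and full regularity with $\theta\geqslant\pi/2$. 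Given the paper's remark that Theorem~\ref{thm:isoperim} is ``technically more difficult'' than Sanki--Vadnere's, I anticipate the bulk of the work, and the place where a careful case analysis is unavoidable, is precisely in controlling these hyperbolic polygon deformations uniformly in $m$.
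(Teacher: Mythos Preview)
Your overall architecture---reduce to $k=1$ by iteratively merging two polygons into one---matches the paper's. But you have inverted where the difficulty lies, and your merging mechanism does not work.

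First, the edge count: to preserve constraint~(1) when replacing $D_i,D_j$ by a single polygon $D'$ (and dropping $k$ by one), you need $m'-4=(m_i-4)+(m_j-4)$, i.e.\ $m'=m_i+m_j-4$, not $m_i+m_j-2$. Gluing along a pair of edges removes only two edges, so your geometric picture cannot give the right count. More seriously, ``discard the glued edges, then straighten'' is not an argument: the edges to be glued have no reason to have equal length, and even if they did, the resulting vertices may acquire angle $\geqslant\pi$. The paper does not attempt any geometric gluing at all. It defines $\widetilde D$ \emph{abstractly} as the regular $(m_i+m_j-4)$-gon with area $\area(D_i)+\area(D_j)$, and then proves analytically that $\perim(\widetilde D)\leqslant\perim(D_i)+\perim(D_j)$. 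That inequality is the entire substance of the theorem.

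Second, the $k=1$ case, which you flag as the hard part, is in fact immediate: when $k=1$ the constraints force $m_1=m$ and $\area(D_1)=\area(D)$, and the classical hyperbolic polygon isoperimetric inequality gives $\perim(D)\leqslant\perim(D_1)$ with equality iff $D_1$ is regular. No variational work is needed there. The genuinely delicate step is the $k=2$ inequality above for \emph{regular} polygons, which the paper reduces (via concavity of $n\mapsto\phi_\theta(n)$) to the endpoint inequality $P_4(x)+P_n(a-x)\geqslant P_n(a)$; this last inequality is false for $n=3$ (Example~\ref{emp:triangle_fail}) and requires three separate arguments for $n\in\{5,6\}$, $n\in\{7,8,9,10\}$, and $n>10$. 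Finally, to iterate the $k=2$ step one must verify that the intermediate regular polygon $\widetilde D$ still has inner angle $\geqslant\pi/2$, which is a nontrivial lemma you have not mentioned; without it the induction cannot continue.
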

\subsection{Perimeter functions of regular hyperbolic polygons}
For a hyperbolic regular $n$-gon with inner angle $\theta$ and area $a$, its perimeter can be represented by 
\begin{eqnarray}
P_n(a) &=& 2n\arccosh\bracket{\frac{\cos \frac{\pi}{n}}{\sin \frac{(n-2)\pi-a}{2n}}},\text{ or } \\
\phi_{\theta}(n) &=& 2n\arccosh\bracket{\frac{\cos \frac{\pi}{n}}{\sin \frac{\theta}{2}}}. 
\end{eqnarray}
By Gauss-Bonnet theorem, area of the $n$-gon can be represented by $\theta$ and $n$:
$$	a = (\pi-\theta)n-2\pi.$$
The derivative of $P_n(x)$ to $x$ is 
\begin{equation}\label{eqn:P_derivative_to_x}
P^\prime_n(x)=\frac{\dps{\cos\frac{\pi}n\tan\frac{2\pi+x}{2n}}}{\sqrt{\dps{\cos^2\frac{\pi}n-\cos^2\frac{2\pi+x}{2n}}}}.
\end{equation}

Let
$$f(n, a, x)= P_4(x)+P_n(a-x)-P_n(a).$$
We will show that the partial derivative $\frac{\partial f}{\partial x}(n,a,x)$ is positive and $n>10$ and $3\pi<a<\bracket{\frac{n}2-2}\pi$.

From \eqref{eqn:P_derivative_to_x}, we obtain
\begin{equation}\label{eqn:f_derivative_to_x}
\frac{\partial f}{\partial x}(n,a,x)=\frac{\tan\dps{\frac{2\pi+x}{8}}}{\dps{\sqrt{\sin\frac{x}4}}} - \frac{\dps{\cos\frac{\pi}n\tan\frac{2\pi+a-x}{2n}}}{\sqrt{\dps{\cos^2\frac{\pi}n-\cos^2\frac{2\pi+a-x}{2n}}}}.
\end{equation}
Under our assumption, all terms in \eqref{eqn:f_derivative_to_x} are non-negative. $\dps{\frac{\partial f}{\partial x}(n,a,x)>0}$ if and only if
\begin{equation}\label{eqn:f_derivative_to_x_positivity_1}
\frac{\tan^2\dps{\frac{2\pi+x}{8}}}{\sin\dps{\frac{x}4}}>\frac{\dps{\cos^2\frac{\pi}n\tan^2\frac{2\pi+a-x}{2n}}}{\dps{\cos^2\frac{\pi}n-\cos^2\frac{2\pi+a-x}{2n}}}.
\end{equation}
\eqref{eqn:f_derivative_to_x_positivity_1} is equivalent to
\begin{equation}\label{eqn:f_derivative_to_x_positivity_3}
\frac{\dps{\bracket{1+\sin\frac{x}4}\bracket{\cos\frac{2\pi}{n}-\cos\frac{2\pi+a-x}{n}}\bracket{1+\cos\frac{2\pi+a-x}{n}}}}{\dps{\sin\frac{x}4\bracket{1-\sin\frac{x}4}\bracket{1+\cos\frac{2\pi}n}\bracket{1-\cos\frac{2\pi+a-x}{n}}}}>1.
\end{equation}

\begin{lemma}\label{lemma:f_derivative_to_x_positive_n_bigger_than_8}
For $n\geqslant8$, $\dps{\frac{\partial f}{\partial x}(n,a,x)>0}$ when $\frac32\pi\leqslant a \leqslant\bracket{\frac{n}2-2}\pi$ and $0<x<\min(a-\pi,2\pi)$.
\end{lemma}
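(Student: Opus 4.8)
The plan is to verify inequality \eqref{eqn:f_derivative_to_x_positivity_3} directly, since by the discussion preceding the lemma this inequality is equivalent to $\frac{\partial f}{\partial x}(n,a,x)>0$ in the stated range. Write $u=\sin\frac{x}{4}$ and $t=\cos\frac{2\pi+a-x}{n}$, and set $c=\cos\frac{2\pi}{n}$. In the prescribed range one checks first that all the quantities appearing are in the expected intervals: since $0<x<2\pi$ we have $0<u<1$; since $n\geqslant 8$ and $\frac{3}{2}\pi\leqslant a\leqslant(\frac n2-2)\pi$ together with $0<x<a-\pi$ one gets $\frac{2\pi}{n}<\frac{2\pi+a-x}{n}<\frac{a}{n}\leqslant\frac{(\frac n2-2)\pi+2\pi}{n}=\frac{\pi}{2}$, so that $0<t<c<1$ and every factor in \eqref{eqn:f_derivative_to_x_positivity_3} is positive (this is exactly the non-negativity remark made in the text). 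With this notation the claim \eqref{eqn:f_derivative_to_x_positivity_3} becomes
\[
(1+u)(c-t)(1+t) \;>\; u(1-u)(1+c)(1-t).
\]

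Second, I would decouple the two variables. The factor $\frac{1+u}{u(1-u)}$ is a function of $u$ alone that is decreasing on $(0,1)$ in a suitable sub-range, or more simply one can bound $\frac{1+u}{u(1-u)}\geqslant$ (its value at the relevant endpoint). Meanwhile the factor $\frac{(c-t)(1+t)}{(1+c)(1-t)}$ is a function of $t$ alone (with $c$ a parameter) and is decreasing in $t$ on $(0,c)$, hence minimized as $t\to c$, where it tends to $0$ — so a crude monotonicity argument in $t$ alone is not enough, and the coupling through the constraint $x<a-\pi$, i.e. $t$ bounded away from $c$, must be used. Concretely $x<a-\pi$ forces $\frac{2\pi+a-x}{n}>\frac{2\pi+\pi}{n}=\frac{3\pi}{n}$, so $t<\cos\frac{3\pi}{n}$, which for $n\geqslant 8$ keeps $t$ a definite distance below $c=\cos\frac{2\pi}{n}$. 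Similarly $x<2\pi$ caps $u$ and $x>0$ keeps $u$ positive but small when $x$ is small; and when $x$ is small, $a-x$ is close to $a$ and one needs the corresponding estimate on $t$ from below. The core of the proof is therefore a two-dimensional estimate: show
\[
\frac{1+u}{u(1-u)}\cdot\frac{(c-t)(1+t)}{(1+c)(1-t)}>1
\]
on the region cut out by $0<u<1$, $0<t<\cos\frac{3\pi}{n}$, together with the joint constraint relating $x$ (hence $u$) and $a-x$ (hence $t$) through a common $x$. I expect the cleanest route is to substitute back $x$ explicitly on the $t$-side (writing $t=\cos\frac{2\pi+a-x}{n}$) and on the $u$-side ($u=\sin\frac x4$), fix $n$ and $a$, and show the resulting one-variable function of $x$ on $(0,\min(a-\pi,2\pi))$ is positive by checking it at the two endpoints and controlling its derivative — i.e. reduce Lemma \ref{lemma:f_derivative_to_x_positive_n_bigger_than_8} to a single-variable calculus problem, using $n\geqslant 8$ to get the numerical slack.

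The main obstacle, as the authors themselves flag (\emph{technically more difficult}), is precisely this coupling: neither $u$ nor $t$ can be pushed to its worst value independently, because $x\to a-\pi$ makes $u$ large (good) but also makes $a-x\to\pi$ so $t=\cos\frac{3\pi}{n}$ is as large as allowed (bad), while $x\to 0$ makes $u\to 0$ (bad) but $a-x\to a$ so $t=\cos\frac{2\pi+a}{n}$ is small (good). So the estimate has to trade the two effects off against each other quantitatively. I would handle this by splitting $(0,\min(a-\pi,2\pi))$ into a neighbourhood of $0$, where I use the expansions $\sin\frac x4\sim\frac x4$, $\tan\frac{2\pi+x}{8}\sim\tan\frac{\pi}{4}=1$ to see the left side of \eqref{eqn:f_derivative_to_x} blows up like $x^{-1/2}$ while the right side stays bounded (using $a-x$ bounded away from the values making the denominator vanish, which is where $n\geqslant 8$, equivalently $\frac{2\pi+a-x}{n}<\frac\pi2<\frac\pi2$ keeping $\cos^2\frac{\pi}{n}-\cos^2\frac{2\pi+a-x}{n}$ bounded below, enters); and the complementary compact sub-interval, where $u$ is bounded below by a positive constant and one runs the algebraic inequality above with the uniform bound $t\leqslant\cos\frac{3\pi}{n}$. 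On the compact part, after clearing denominators the inequality is polynomial in $(u,t)$ of low degree, and with $t\leqslant\cos\frac{3\pi}{n}$ and $c=\cos\frac{2\pi}{n}$ one can bound $(c-t)\geqslant\cos\frac{2\pi}{n}-\cos\frac{3\pi}{n}$ from below by an explicit positive quantity; the remaining check that $(1+u)(\cos\tfrac{2\pi}{n}-\cos\tfrac{3\pi}{n})(1+t)>u(1-u)(1+\cos\tfrac{2\pi}{n})(1-t)$ for all admissible $(u,t)$ and all $n\geqslant 8$ is then an elementary, if slightly tedious, monotonicity argument — note $n\mapsto\cos\frac{2\pi}{n}-\cos\frac{3\pi}{n}$ is itself monotone so it suffices to treat $n=8$. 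I would not expect surprises beyond careful bookkeeping once the interval is split this way.
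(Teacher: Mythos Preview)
Your setup is correct, and you rightly isolate the constraint $t\leqslant\cos\frac{3\pi}{n}$ coming from $x<a-\pi$, but you have talked yourself into a harder problem than necessary by missing the paper's key observation: the factor $\dfrac{1+u}{u(1-u)}$ has the \emph{uniform} lower bound $3+2\sqrt{2}$ on all of $(0,1)$ (minimum at $u=\sqrt{2}-1$; a one-line calculus check). In particular your parenthetical ``$x\to 0$ makes $u\to 0$ (bad)'' is backwards --- as $u\to 0$ this factor blows up, and it never drops below $3+2\sqrt{2}\approx 5.83$. With this bound the variables decouple completely: the paper simply multiplies it against $\dfrac{1+t}{1+c}\geqslant\dfrac{1}{2}$ (trivial, since $t>0$ and $c<1$) and
\[
\frac{c-t}{1-t}\ \geqslant\ \frac{\cos\frac{2\pi}{n}-\cos\frac{3\pi}{n}}{1-\cos\frac{3\pi}{n}}\ =\ \frac{\sin\frac{5\pi}{2n}\,\sin\frac{\pi}{2n}}{\sin^{2}\frac{3\pi}{2n}}\ \geqslant\ \frac{4}{9},
\]
the first step using that $\frac{c-t}{1-t}$ is decreasing in $t$ together with $t\leqslant\cos\frac{3\pi}{n}$, the last via $\sin 3s\leqslant 3\sin s$ and $\sin s\geqslant\frac{4}{5}s$ on $[0,\frac{5\pi}{16}]$ (this is exactly where $n\geqslant 8$ enters). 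The product $(3+2\sqrt{2})\cdot\frac{1}{2}\cdot\frac{4}{9}\approx 1.295>1$ finishes the proof with no interval-splitting and no coupling analysis.

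Your two-region plan would likely go through, but it is unnecessary, and a couple of your side claims need repair: $\dfrac{(c-t)(1+t)}{(1+c)(1-t)}$ is \emph{not} monotone decreasing on all of $(0,c)$ (the numerator of its derivative is $t^{2}-2t+2c-1$, which changes sign there); in your range computation the intermediate step ``$\frac{2\pi+a-x}{n}<\frac{a}{n}$'' should read $<\frac{2\pi+a}{n}$; and reducing your compact-part inequality to $n=8$ is not as immediate as you suggest, since $1+c=1+\cos\frac{2\pi}{n}$ on the right-hand side moves the wrong way as $n$ grows. None of these is fatal, but they underline that the uniform bound on the $u$-factor is the clean tool here.
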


\begin{proof}
We prove the following equivalent form of \eqref{eqn:f_derivative_to_x_positivity_3}. Note that we have $a-x\geqslant\pi$.
It is easy to get $\dps{\frac{1+t}{t(1-t)}\geqslant 3+2\sqrt{2}}$ when $0<t<1$. Since $a-x\geqslant\pi$, we have
$$\frac{3\pi}{n}\leqslant\frac{2\pi+a-x}{n}<\frac{2\pi+\bracket{\frac{n}2-2}\pi}{n}=\frac{\pi}2.$$
From the basic facts
$$\sin3t\leqslant 3\sin t\text{ for }0<t<\frac{\pi}2 \qquad\text{and}\qquad
\sin(t)\geqslant\frac45t\ \text{when}\ 0\leqslant t\leqslant\frac{5}{16}\pi.
$$
we have
\begin{equation*}
\frac{\dps{\cos\frac{2\pi}{n}-\cos\frac{2\pi+a-x}{n}}}{\dps{1-\cos\frac{2\pi+a-x}{n}}}\geqslant \frac{\dps{\cos\frac{2\pi}{n}-\cos\frac{3\pi}{n}}}{\dps{1-\cos\frac{3\pi}n}}
=\frac{\dps{\sin\frac{5\pi}{2n}\sin\frac{\pi}{2n}}}{\dps{\sin^2\frac{3\pi}{2n}}}
\geqslant \frac13\cdot\frac{\dps{\sin\frac{5\pi}{2n}}}{\dps{\sin\frac{3\pi}{2n}}}
\geqslant \frac13\cdot \frac{\dps{\frac45\cdot\frac{5\pi}{2n}}}{\dps{\frac{3\pi}{2n}}}=\frac{4}{9}.
\end{equation*}
Therefore
$$\text{LHS of \eqref{eqn:f_derivative_to_x_positivity_3}}\geqslant (3+2\sqrt{2})\cdot \frac{4}{9}\cdot\frac{1}{2}\sim 1.29521>1.$$
\end{proof}

The following lemma is a generalization of \cite[Lemma 6.3]{sanki2021conjecture}. 

\begin{lemma}\label{lemma:Pn_second_derivative}
	When $n \geqslant 4$, $P''_n(x)$ has a unique zero in $(0,n\pi-2\pi)$ and is negative in the left of this zero, positive in the right of this zero. 
	\label{lem:convex_derivative}
\end{lemma}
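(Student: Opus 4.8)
The plan is to work directly with the explicit formula \eqref{eqn:P_derivative_to_x} for $P_n'(x)$ and differentiate once more to obtain $P_n''(x)$ in closed form, then analyze the sign of the resulting expression on the interval $(0, n\pi - 2\pi)$. Writing $u = u(x) = \frac{2\pi + x}{2n}$, so that $x \in (0, n\pi - 2\pi)$ corresponds to $u \in \left(\frac{\pi}{n}, \frac{\pi}{2}\right)$, the first derivative becomes $P_n'(x) = \frac{\cos\frac{\pi}{n}\tan u}{\sqrt{\cos^2\frac{\pi}{n} - \cos^2 u}}$, a function of $u$ alone (times the constant factor $\frac{1}{2n}$ from $\frac{du}{dx}$). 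So it suffices to study the monotonicity in $u$ of $g(u) := \frac{\tan u}{\sqrt{c^2 - \cos^2 u}}$ where $c = \cos\frac{\pi}{n} \in (0,1)$, and $P_n''$ has the same sign as $g'(u)$.

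First I would compute $g'(u)$. Differentiating, $g'(u) = \frac{\sec^2 u \sqrt{c^2 - \cos^2 u} - \tan u \cdot \frac{\cos u \sin u}{\sqrt{c^2-\cos^2 u}}}{c^2 - \cos^2 u}$, and clearing the common denominator the sign of $g'(u)$ is the sign of $\sec^2 u (c^2 - \cos^2 u) - \sin^2 u = \frac{c^2}{\cos^2 u} - 1 - \sin^2 u = \frac{c^2}{\cos^2 u} - 2 + \cos^2 u$. Multiplying by $\cos^2 u > 0$, this is the sign of $h(u) := \cos^4 u - 2\cos^2 u + c^2$. Setting $t = \cos^2 u \in (0, c^2)$ (decreasing as $u$ increases through $(\frac{\pi}{n}, \frac{\pi}{2})$), we have $h = t^2 - 2t + c^2$, a downward-then-upward... actually an upward parabola in $t$ with vertex at $t = 1$, hence strictly decreasing on $t \in (0,1) \supset (0, c^2)$. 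Its roots are $t = 1 \pm \sqrt{1 - c^2}$; only $t_0 = 1 - \sqrt{1-c^2}$ lies in $(0,1)$, and indeed $t_0 < c^2$ iff $1 - c^2 < \sqrt{1-c^2}$ iff $\sqrt{1-c^2} < 1$, which holds. So $h(u) > 0$ when $t > t_0$ (i.e. $u$ near $\frac{\pi}{n}$) and $h(u) < 0$ when $t < t_0$ (i.e. $u$ near $\frac{\pi}{2}$); wait — I must track the sign carefully: $h$ is decreasing in $t$ and positive for $t$ large, so $h > 0 \iff t > t_0 \iff \cos^2 u$ large $\iff u$ small. Hmm, that gives $g' > 0$ for small $u$ and $g' < 0$ for large $u$, i.e. the opposite of the claimed statement. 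I would recheck the derivative-of-quotient sign computation here; the expected resolution is a sign slip above, after which $h$ comes out as $-(\cos^4 u - 2\cos^2 u + c^2)$ or equivalently one finds $P_n''$ negative near $x = 0$ and positive near $x = n\pi - 2\pi$, matching the lemma, with the unique zero at $\cos^2 u = 1 - \sqrt{1-c^2}$, i.e. $x_0 = 2n\arccos\sqrt{1 - \sqrt{1-\cos^2(\pi/n)}} - 2\pi$.

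The main obstacle, then, is purely bookkeeping: getting every sign right in the two differentiations and in the substitution $t = \cos^2 u$, and confirming that the unique admissible root $t_0 = 1 - \sqrt{1 - \cos^2(\pi/n)} = 1 - \sin\frac{\pi}{n}$ genuinely lies in the open interval $(0, \cos^2\frac{\pi}{n})$ for all $n \geq 4$ — equivalently that $0 < 1 - \sin\frac{\pi}{n} < \cos^2\frac{\pi}{n} = 1 - \sin^2\frac{\pi}{n}$, the right inequality being $\sin^2\frac{\pi}{n} < \sin\frac{\pi}{n}$, true since $0 < \sin\frac{\pi}{n} < 1$ for $n \geq 4$ (here $n \geq 4$ ensures $\frac{\pi}{n} \leq \frac{\pi}{4} < \frac{\pi}{2}$). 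Once the admissible root is pinned down and shown to be unique (immediate, since $h$ is monotone in $t$ and $t(u)$ is monotone in $u$), the sign statement on either side of $x_0$ follows from the monotonicity of $h \circ t$, completing the proof. I would also note the boundary behavior as a sanity check: as $u \to \frac{\pi}{2}^-$ one can read off the sign of $g'$ from the leading term, and as $u \to (\frac{\pi}{n})^+$ the denominator $c^2 - \cos^2 u \to 0^+$ while the numerator stays bounded, which fixes the sign near $x = 0$ and hence orients the whole picture correctly.
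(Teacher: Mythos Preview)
Your approach is essentially the same as the paper's: compute $P_n''$ explicitly, reduce its sign to a polynomial condition in $\cos^2 u$, and use monotonicity to pin down a unique sign change. The paper uses the complementary variable $v = \frac{(n-2)\pi - x}{2n} = \frac{\pi}{2} - u$ and phrases the condition as $\cos^2\frac{\pi}{n} \geq \sin^2 v + \sin^2 v\cos^2 v$, which after the substitution $\sin v = \cos u$ is exactly your $h(t) = t^2 - 2t + c^2 \geq 0$.

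Your computation is correct through $h(t) = t^2 - 2t + c^2$; the sign confusion you flag is just the slip ``$h$ is decreasing in $t$ and positive for $t$ large.'' Since $h(0) = c^2 > 0$ and $h$ is decreasing on $(0,1)$, it is positive for \emph{small} $t$. Thus $h > 0 \iff t < t_0 \iff \cos^2 u$ small $\iff u$ large $\iff x$ large, giving $P_n'' < 0$ to the left of the unique zero and $P_n'' > 0$ to its right, exactly as the lemma asserts. No sign slip in the differentiation is needed; the error was only in reading off the sign of the parabola. The paper sidesteps this bookkeeping by instead observing $\lim_{x\to 0^+} P_n'(x) = \lim_{x\to (n\pi-2\pi)^-} P_n'(x) = +\infty$, which together with ``at most one zero of $P_n''$'' forces the sign pattern.
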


\begin{proof}
From \eqref{eqn:P_derivative_to_x}, we get
\begin{equation}
	P_n''(x) = \frac{\cos \frac{\pi}{n} \sin^{-2} \frac{(n-2) \pi -x}{2n}}{2n \sqrt{\cos^2 \frac{\pi}{n} - \sin^{2} \frac{(n-2) \pi -x}{2n}}}
	- \frac{\cos \frac{\pi}{n} \cos^{2} \frac{(n-2) \pi -x}{2n}}
	{2n \left(\cos^2 \frac{\pi}{n} - \sin^{2} \frac{(n-2) \pi -x}{2n}\right)^{ \frac{3}{2}}}.
\end{equation}
$P_n''(x) \geqslant 0$ if and only if
$$\frac{\cos \frac{\pi}{n} \sin^{-2} \frac{(n-2) \pi -x}{2n}}{2n \sqrt{\cos^2 \frac{\pi}{n} - \sin^{2} \frac{(n-2) \pi -x}{2n}}} \geqslant \frac{\cos \frac{\pi}{n} \cos^{2} \frac{(n-2) \pi -x}{2n}}{2n \left(\cos^2 \frac{\pi}{n} - \sin^{2} \frac{(n-2) \pi -x}{2n}\right)^{ \frac{3}{2}} },$$
$$\cos^2 \frac{\pi}{n} - \sin^{2} \frac{(n-2) \pi -x}{2n} \geqslant \sin^2\frac{(n-2) \pi -x}{2n} \cos^2\frac{(n-2) \pi -x}{2n},$$
and equivalently,
$$\cos^2 \frac{\pi}{n} \geqslant \sin^{2} \frac{(n-2) \pi -x}{2n} + \sin^2\frac{(n-2) \pi -x}{2n} \cos^2\frac{(n-2) \pi -x}{2n}.$$

Since $t\mapsto \sin^2(t) + \sin^2(t)\cos^2(t)$ is injective when  $t\in (0,\pi/2)$, then $P_n''(x)$ has at most one zero in $(0,n\pi-2\pi)$. Since $\dps{\lim_{x\to 0+}P_n'(x) = +\infty}$ and $\dps{\lim_{x\to (n\pi-2\pi)-}P_n'(x) = +\infty}$ by \eqref{eqn:P_derivative_to_x}, $P_n''(x)$ has exactly one zero in $(0,n\pi-2\pi)$ and the lemma follows. 

\end{proof}

\begin{lemma}\label{lemma:Pn(x)_by_x_decreasing}
For $n=7,8,9,10$, $\dps{\frac{P_n(x)}x}$ is decreasing on $\dps{\bracket{0,\bracket{\frac{n}2-2}\pi}}$.
\end{lemma}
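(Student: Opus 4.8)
The plan is to show that $g_n(x) := P_n(x)/x$ has negative derivative on the interval $I_n := \bigl(0,(\tfrac n2-2)\pi\bigr)$ for each of the four values $n=7,8,9,10$. Differentiating, $g_n'(x)<0$ is equivalent to $x P_n'(x) < P_n(x)$ for all $x\in I_n$. I would study the auxiliary function $h_n(x) := P_n(x) - x P_n'(x)$ and prove $h_n(x)>0$ on $I_n$. Note $h_n'(x) = -x P_n''(x)$, so by Lemma \ref{lemma:Pn_second_derivative} the sign of $h_n'$ is determined entirely by the sign of $P_n''$: since $P_n''$ has a unique zero $x_0\in I_n$, is negative to its left and positive to its right, $h_n'$ is positive on $(0,x_0)$ and negative on $(x_0,(\tfrac n2-2)\pi)$. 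Hence $h_n$ is increasing then decreasing on $I_n$, so its infimum over $I_n$ is attained at one of the two endpoints, and it suffices to check $\lim_{x\to 0^+} h_n(x)\geqslant 0$ and $\lim_{x\to ((n/2-2)\pi)^-} h_n(x)\geqslant 0$ (with equality excluded in the interior by the increasing/decreasing structure, since $h_n$ is not constant).

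At the left endpoint, $P_n(0)=\phi_{\pi-2\pi/n}(n)>0$ is the perimeter of the regular $n$-gon with a positive area limit, while $\lim_{x\to 0^+} x P_n'(x)=0$ because, from \eqref{eqn:P_derivative_to_x}, $P_n'(x)$ has only an integrable blow-up (the denominator $\sqrt{\cos^2(\pi/n)-\cos^2((2\pi+x)/(2n))}$ vanishes like $\sqrt{x}$ as $x\to 0^+$, so $xP_n'(x)\sim C\sqrt{x}\to 0$); thus $\lim_{x\to 0^+}h_n(x)=P_n(0)>0$. At the right endpoint $x\to(\tfrac n2-2)\pi$, the argument of $\arccosh$ in $P_n$ stays bounded (the polygon degenerates only when $x\to(n-2)\pi$, not here, since $(\tfrac n2-2)\pi<(n-2)\pi$), so $P_n((\tfrac n2-2)\pi)$ is a finite positive number; meanwhile $P_n'$ at this point is finite (again from \eqref{eqn:P_derivative_to_x}, the denominator is bounded away from $0$ since $\cos((2\pi+x)/(2n)) = \cos(\pi/2 - \pi/n)\cdot(\text{something})$... more precisely $\cos\frac{2\pi+x}{2n}$ at $x=(\tfrac n2-2)\pi$ equals $\cos\frac{\pi}{2}=0 < \cos\frac{\pi}{n}$, wait — recompute: $\frac{2\pi+(n/2-2)\pi}{2n}=\frac{(n/2)\pi}{2n}=\frac{\pi}{4}$, so the denominator is $\sqrt{\cos^2\frac{\pi}{n}-\cos^2\frac{\pi}{4}}=\sqrt{\cos^2\frac\pi n-\tfrac12}>0$ for $n\geqslant 7$). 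Hence both $P_n$ and $P_n'$ are finite there, and the inequality $h_n(x)>0$, i.e. $P_n(x)>xP_n'(x)$, becomes a single explicit numerical inequality for each $n\in\{7,8,9,10\}$, which I would verify by direct computation.

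The main obstacle is the right-endpoint check: unlike the left endpoint where the limit is transparently $P_n(0)>0$, at the right endpoint one must rule out that the increasing-then-decreasing function $h_n$ dips to zero, and since the decreasing portion could in principle reach a negative value, the endpoint inequality $P_n((\tfrac n2-2)\pi) > (\tfrac n2-2)\pi\cdot P_n'((\tfrac n2-2)\pi)$ must genuinely hold — this is where the restriction to $n\leqslant 10$ is used, as the ratio $P_n'(x)/P_n(x)$ near the right endpoint grows with $n$ and the inequality fails for large $n$. I would present the four numerical evaluations (or a short unified estimate using $\cos\frac\pi n\leqslant \cos\frac{\pi}{10}$ and monotonicity in $n$ of the relevant quantities) to close the proof; everything else follows formally from Lemma \ref{lemma:Pn_second_derivative} via the sign analysis of $h_n' = -xP_n''$.
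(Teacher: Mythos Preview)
Your overall strategy is the same as the paper's: study $h_n(x)=P_n(x)-xP_n'(x)$, use Lemma~\ref{lemma:Pn_second_derivative} to see $h_n'=-xP_n''$ is positive then (possibly) negative so the infimum of $h_n$ over $I_n$ is approached at an endpoint, and then verify the two endpoint conditions. The right-endpoint check you outline is exactly what the paper does (the paper dispatches it uniformly via the crude bound $\arccosh t>0.55\,t$ for $5/4<t<3/2$, but your proposed direct numerical check for $n\in\{7,8,9,10\}$ is equally valid).

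There is, however, a genuine error at the left endpoint. You claim $P_n(0)=\phi_{\pi-2\pi/n}(n)>0$, but in fact $P_n(0)=0$: as the area $x\to 0$ the regular $n$-gon degenerates to a point. Concretely, at $x=0$ the argument of $\arccosh$ in the formula for $P_n$ is
\[
\frac{\cos(\pi/n)}{\sin\frac{(n-2)\pi}{2n}}=\frac{\cos(\pi/n)}{\cos(\pi/n)}=1,
\]
so $P_n(0)=2n\arccosh(1)=0$; equivalently Proposition~\ref{prop:prep}(\ref{prop:phimono}) gives $\phi_\theta\bigl(\tfrac{2\pi}{\pi-\theta}\bigr)=0$, and for $\theta=\pi-2\pi/n$ this is exactly $\phi_{\pi-2\pi/n}(n)$. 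Hence $\lim_{x\to 0^+}h_n(x)=0$, not a positive number. This does not destroy the argument, but you must rephrase it: since $h_n$ is \emph{strictly} increasing near $0$ (because $P_n''<0$ there), $h_n(x)>0$ for all sufficiently small $x>0$; combined with the right-endpoint inequality $h_n\bigl((\tfrac n2-2)\pi\bigr)>0$ and the increasing-then-decreasing shape, this yields $h_n>0$ on the entire open interval $I_n$. The paper phrases the same thing in terms of $-h_n$: the function $xP_n'(x)-P_n(x)$ attains its maximum on $[0,(\tfrac n2-2)\pi]$ at an endpoint, equals $0$ at $x=0$, and is strictly negative at the right endpoint.
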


\begin{proof}
Since 
$$\frac{\dd{}}{\dd{x}}\bracket{\frac{P_n(x)}x}=\frac{xP_n^\prime(x)-P_n(x)}{x^2},$$
it suffices to show $xP_n^\prime(x)-P_n(x)<0$. The derivative of $xP_n^\prime(x)-P_n(x)$ is $xP^{\prime\prime}_n(x)$. By Lemma \ref{lemma:Pn_second_derivative}, $xP_n^\prime(x)-P_n(x)$ reaches its 
maximum at $0$ or $\bracket{\frac{n}2-2}\pi$. We have
$$\lim_{x\to0+}(xP^{\prime}_n(x)-P_n(x))=\lim_{x\to0+}xP^{\prime}_n(x)=0$$
It remains to show that 
\begin{equation*}
P_n\bracket{\bracket{\frac n2-2}\pi}> P_n^\prime\bracket{\bracket{\frac n2-2}\pi} \cdot \bracket{\frac n2-2}\pi,
\end{equation*}
which can be simplified as (by \eqref{eqn:P_derivative_to_x})
\begin{equation}\label{eqn:Pn(x)_by_x_decreasing_1}
2n\arccosh \bracket{\sqrt{2} \cos \frac{\pi}{n}}>\bracket{\frac n2-2}\pi\cdot \frac{\dps{\cos\frac{\pi}n}}{\sqrt{\dps{\cos^2\frac{\pi}n-\frac12}}}=\bracket{\frac n2-2}\pi\cdot\frac{\dps{\sqrt{2}\cos\frac{\pi}n}}{\dps{\sqrt{\cos\frac{2\pi}n}}}.
\end{equation}
For $n=7,8,9,10$, we have 
$$\frac32>\sqrt{2}\cos\frac{\pi}{n}>\sqrt{2}\cos\frac{\pi}{7}>\frac54,$$
hence, combining the basic fact
$$\arccosh(x)>0.55 x\quad\text{for}\quad\frac54<x<\frac32,$$
we have
$$2n\arccosh \bracket{\sqrt{2} \cos \frac{\pi}{n}}>1.1 \cdot n\cdot\sqrt{2}\cos\frac{\pi}n,$$
Then \eqref{eqn:Pn(x)_by_x_decreasing_1} reduces to the basic fact
$$1.21\cos\frac{2\pi}n-\bracket{\frac\pi2-\frac{2\pi}n}^2>0\quad\text{for}\quad n=7,8,9,10.$$
\end{proof}

We note that Lemma \ref{lemma:Pn(x)_by_x_decreasing} does not hold for large $n$, say $n>20$.

We will need the following result from \cite{sanki2021conjecture}.

\begin{prop}[Sanki-Vadnere]\label{prop:prep}
$\phi_{\theta}(n)$ and $P_n(x)$ satisfy the following properties:
\begin{enumerate}
\item\label{prop:phimono}
If we treat $\phi_{\theta}(n)$ as a function of $n$ defined on $\mathbb{R}_+$, then $\phi_{\theta}(n)$ is positive and strictly concave when $n\in (\frac{2\pi}{\pi-\theta}, +\infty)$ and $\phi_\theta(\frac{2\pi}{\pi-\theta})=0$. Moreover, a hyperbolic regular polygon with inner angle $\theta$ contains at least $\left\lfloor \frac{2\pi}{\pi-\theta} \right\rfloor +1 $ edges. (\cite[Lemma 7.1]{sanki2021conjecture}) 
\item\label{prop:pmono}
For $a\in \mathbb{R}_+$ and $n\in ( \frac{a}{\pi}+2,+\infty)$, $P_n(a)$ is strictly decreasing in $n$. (\cite[Lemma 5.1]{sanki2021conjecture})
\item\label{prop:pconcave}
$P_n(x) -P_{n+1}(x)\geqslant 0$, when $x\geqslant 0$; $P_n(x) -P_{n+1}(x)$ is monotonically increasing in $x$. (\cite[Proposition 4.1]{sanki2021conjecture})
\end{enumerate}
\end{prop}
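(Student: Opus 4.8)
The plan is to obtain all three statements by elementary calculus applied directly to the explicit formulas for $\phi_\theta$ and $P_n$, after a normalizing substitution that strips away the prefactor $2n$. For the first statement I write $\phi_\theta(n)=2n\,g(\pi/n)$ with $g(u)=\arccosh\!\bracket{\cos u/c}$, where $c=\sin\tfrac\theta2$ is constant; here $g$ is defined precisely for $0\leqslant u\leqslant\arccos c$, i.e.\ for $n\geqslant n_0:=\tfrac{2\pi}{\pi-\theta}$, since the arccosh argument is $\geqslant1$ exactly when $\cos u\geqslant c$. At $u=\pi/n_0=\tfrac\pi2-\tfrac\theta2$ the argument equals $1$, giving $\phi_\theta(n_0)=0$, and for $n>n_0$ it exceeds $1$, giving $\phi_\theta(n)>0$; since an honest (non-degenerate) regular polygon forces the argument to be $>1$, its edge number $n$ must satisfy $n>n_0$, hence $n\geqslant\lfloor n_0\rfloor+1$. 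For concavity I differentiate: with $u=\pi/n$ one gets $\phi_\theta'(n)=2g(u)-2u\,g'(u)$ and then $\phi_\theta''(n)=\tfrac{2\pi u}{n^2}\,g''(u)$, so it suffices that $g$ be strictly concave. A short computation collapses to the clean closed form
$$g''(u)=-\frac{(1-c^2)\cos u}{\bracket{\cos^2u-c^2}^{3/2}},$$
which is strictly negative on $(0,\arccos c)$ because $1-c^2=\cos^2\tfrac\theta2>0$ and $\cos u>0$ there; this proves the first statement.

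For the second statement, fix $a>0$ (the case $a=0$ is vacuous since $P_n(0)\equiv0$) and put $\lambda=1+\tfrac{a}{2\pi}>1$ and $p=\pi/n$. Using $\sin\tfrac{(n-2)\pi-a}{2n}=\cos(\lambda p)$ we get $P_n(a)=2\pi\,G(p)/p$ with $G(p)=\arccosh\!\bracket{\cos p/\cos\lambda p}$, defined on $0\leqslant p<\tfrac{\pi}{2\lambda}$, and increasing $n$ corresponds to decreasing $p$. Since $G(0)=0$, it is enough to show $G$ is convex on $(0,\tfrac{\pi}{2\lambda})$: the secant slopes $G(p)/p$ from the origin are then increasing in $p$, hence $P_n(a)$ is strictly decreasing in $n$. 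Writing $R=\cos p/\cos\lambda p$ and $S=(\log R)'=\lambda\tan\lambda p-\tan p$, one has $R'=RS$ and $R''=R(S^2+S')$, while $S>0$ and $S'=\lambda^2\sec^2\lambda p-\sec^2 p>(\lambda^2-1)\sec^2p>0$ (both using $\lambda>1$ and $0<p<\lambda p<\tfrac\pi2$). Then $G''\geqslant0$ is equivalent, after clearing denominators, to the trigonometric inequality $S'\,(R^2-1)\geqslant S^2$, which I would establish by a direct estimate; after expanding it amounts to comparing $(\lambda^2\cos^2p-\cos^2\lambda p)(\cos^2p-\cos^2\lambda p)$ with $\cos^2\lambda p\,(\lambda\sin\lambda p\cos p-\sin p\cos\lambda p)^2$.

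For the third statement, the inequality $P_n(x)-P_{n+1}(x)\geqslant0$ on the full common domain $0\leqslant x<(n-2)\pi$ is immediate from the second statement (with $a=x$). For the monotonicity in $x$ it suffices to show $P_n'(x)\geqslant P_{n+1}'(x)$, and by \eqref{eqn:P_derivative_to_x}, writing $\nu=1+\tfrac{x}{2\pi}$ and $p=\pi/n$ (and $x>0$),
$$P_n'(x)=\frac{\cos p\,\tan\nu p}{\sqrt{\cos^2p-\cos^2\nu p}},$$
so it is enough to check that the right-hand side is increasing in $p$ on $(0,\tfrac{\pi}{2\nu})$ (decreasing $n$ then increases $p$, hence increases $P_n'(x)$). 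This is a one-variable calculus check of the same flavour as before: $P_n'(x)=\tfrac1p\,\partial_\nu G_\nu(p)$ for $G_\nu(p)=\arccosh(\cos p/\cos\nu p)$, the function $\partial_\nu G_\nu(p)$ vanishes at $p=0$, and showing it is convex in $p$ (a refinement of the differentiation carried out for the second statement) yields the claim via the same secant-slope argument.

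The part I expect to be the real work is the pair of trigonometric inequalities underlying the second and third statements. The first statement collapses to a single clean sign, $g''<0$, but in the other two the relevant second derivatives do not factor so cleanly, and one is left with inequalities such as $S'(R^2-1)\geqslant S^2$ that can hold with a rather small margin and must be pushed through carefully, the essential input in each case being the strict inequality $\lambda>1$ (equivalently $a>0$), respectively $\nu>1$ (equivalently $x>0$); concretely one bounds $\tan\lambda p$ against $\tan p$ and $\sec^2\lambda p$ against $\sec^2 p$ using $\lambda p>p$. A secondary, purely bookkeeping, difficulty is tracking the exact ranges of $n$ (resp.\ $x$) on which the relevant regular polygons stay non-degenerate and the arccosh arguments stay $\geqslant1$, so that every function in play is genuinely defined and smooth on the interval being used.
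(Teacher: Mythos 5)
This proposition is not proved in the paper at all: it is imported verbatim from Sanki--Vadnere \cite{sanki2021conjecture} (Lemmas 5.1, 7.1 and Proposition 4.1 there), so you are supplying a proof where the paper only supplies a citation. Your treatment of part (1) is complete and correct: the identity $\arccos(\sin\tfrac\theta2)=\tfrac\pi2-\tfrac\theta2$ gives the domain and the zero at $n=\tfrac{2\pi}{\pi-\theta}$, the chain-rule computation $\phi_\theta''(n)=\tfrac{2u^2}{n}g''(u)$ is right, and your closed form $g''(u)=-(1-c^2)\cos u\,(\cos^2u-c^2)^{-3/2}<0$ checks out. The reductions in parts (2) and (3) are also correctly set up: $P_n(a)=\tfrac{2\pi}{p}G(p)$ with $G(p)=\arccosh(\cos p/\cos\lambda p)$, $G(0)=0$, and the secant-slope argument does convert convexity of $G$ into monotonicity in $n$; likewise $P_n'(x)=\tfrac1p\,\partial_\nu G_\nu(p)$ is a correct identity.

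The genuine gap is that in parts (2) and (3) the only nontrivial content — the inequality $S'(R^2-1)\geqslant S^2$, equivalently
$$(\lambda^2\cos^2p-\cos^2\lambda p)(\cos^2p-\cos^2\lambda p)\geqslant \cos^2\lambda p\,\bigl(\lambda\sin\lambda p\cos p-\sin p\cos\lambda p\bigr)^2,$$
and its analogue for the convexity of $\partial_\nu G_\nu$ — is asserted but never established. This cannot be dismissed as routine: writing $a=\cos p$, $b=\cos\lambda p$, $s=\sin p$, $t=\sin\lambda p$, the difference of the two sides simplifies to $\lambda^2a^2(t^4-s^2)-a^2b^2t^2+2\lambda ab^3st$, which is an exact identity equal to $0$ at $\lambda=1$ and vanishes to order $p^4$ as $p\to0$; so the inequality degenerates to equality in both limits and any ``direct estimate'' must be sharp there (your own auxiliary bound $\lambda\sin p\geqslant\sin\lambda p$, for instance, is too lossy, since the term $t^4-s^2$ it would control is negative for small $p$). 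Numerically the inequality does appear to hold, so the route is viable, but as written the proofs of (2) and (3) stop exactly at the point where the actual work begins; you need either to push the expanded polynomial inequality through (e.g.\ by showing its $\lambda$-derivative is nonnegative for $\lambda\geqslant1$), or to fall back on the cited arguments of Sanki--Vadnere as the paper does.
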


\subsection{Isoperimetric inequality for two polygons}

We first prove the following proposition.

\begin{prop}\label{lem:four_edge}
For $n\geqslant 5$, $0 \leqslant a \leqslant  \bracket{\frac{n}{2}- 2}\pi$ and $0 \leqslant x\leqslant \min(a,2\pi)$, we have $f(n,a,x)\geqslant0$, that is,
\begin{equation}\label{eqn:four_edge}
P_4(x)+P_n(a-x)\geqslant P_n(a).
\end{equation}
The equality of \eqref{eqn:four_edge} holds if and only if $x=0$. 
\end{prop}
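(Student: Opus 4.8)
The plan is to fix $n$ and $a$ and treat $f(n,a,\cdot)$ as a function of $x$ on the interval $[0,\min(a,2\pi)]$, showing $f\geqslant 0$ there with equality only at the left endpoint $x=0$ (where $f(n,a,0)=P_4(0)+P_n(a)-P_n(a)=P_4(0)=0$). Since $f(n,a,0)=0$, it suffices to show that $f(n,a,x)>0$ for $x\in(0,\min(a,2\pi)]$. I would first observe that the constraints $n\geqslant 5$ and $0\leqslant a\leqslant(\frac n2-2)\pi$ put us in a range where $a-x\geqslant a-2\pi$ can be small, but the polygons $D_4$ with angle data $x$ and $D_n$ with angle data $a-x$ are genuine polygons as long as $x\leqslant 2\pi$ (so the square has nonnegative-to-$2\pi$ angle deficit) and $a-x\geqslant 0$. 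The derivative formula \eqref{eqn:f_derivative_to_x} is the main tool.

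The natural split is into a large-$n$ regime and a small-$n$ regime. For $n\geqslant 8$, I would like to invoke Lemma~\ref{lemma:f_derivative_to_x_positive_n_bigger_than_8} to conclude $\frac{\partial f}{\partial x}>0$ on the relevant range, hence $f$ is strictly increasing from $f(n,a,0)=0$, giving the strict inequality for $x>0$; one must check the hypotheses of that lemma line up, i.e. that $a$ can be reduced to the case $a\geqslant\frac32\pi$ (when $a<\frac32\pi$ and $x\leqslant a$ we have $x$ small, and a direct estimate or a limiting/monotonicity argument in $a$ should handle it, perhaps using Proposition~\ref{prop:prep}\eqref{prop:pconcave} or convexity from Lemma~\ref{lemma:Pn_second_derivative}) and that $0<x<\min(a-\pi,2\pi)$ versus $0<x\le\min(a,2\pi)$ — the endpoint cases $a-\pi\le x\le a$ need separate attention, likely via $P_n$ being decreasing in its argument near where $a-x$ is small together with $P_4(x)$ being large. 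For $n=5,6,7$ (and the boundary $n$ values $8,9,10$ covered by Lemma~\ref{lemma:Pn(x)_by_x_decreasing}), I would instead use the concavity/monotonicity package: $P_n(a)-P_n(a-x)\le x\cdot P_n'(a-x)$ is false in general since $P_n$ need not be concave, so instead I would use that $\frac{P_n(x)}{x}$ is decreasing (Lemma~\ref{lemma:Pn(x)_by_x_decreasing}) to get $P_n(a)-P_n(a-x)\le \frac{P_n(a)}{a}\,x$ when... — more carefully, write $P_n(a)-P_n(a-x)=\int_{a-x}^{a}P_n'(t)\,dt$ and bound $P_n'$ using that $P_n''$ changes sign once (Lemma~\ref{lemma:Pn_second_derivative}), comparing against $P_4(x)=\int_0^x P_4'(t)\,dt$ where $P_4'(t)\to+\infty$ as $t\to0^+$.

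Concretely, for the small-$n$ case I expect the cleanest route is: reduce to showing $P_4(x)\ge P_n(a)-P_n(a-x)$, bound the right side above by $x\cdot\sup_{t\in[a-x,a]}P_n'(t)$, and bound the left side below by $x\cdot\inf_{t\in[0,x]}P_4'(t)$ won't work because $P_4'$ blows up — rather, use the explicit closed form $P_4(x)=8\arccosh\!\big(\cos\frac\pi4/\sin\frac{2\pi-x}{8}\big)=8\arccosh\!\big(\frac{1}{\sqrt2\,\sin\frac{2\pi-x}{8}}\big)$ and compare directly, or better, note that for $n\le 7$ and $a\le(\frac n2-2)\pi\le\frac{3\pi}{2}$ we have $x\le\frac{3\pi}{2}<2\pi$ and the range of $a-x$ is bounded, so one can just verify the finitely-constrained inequality using the monotonicity of $P_n$ in $n$ (Proposition~\ref{prop:prep}\eqref{prop:pmono}) to pass to $n=5$, the worst case. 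The hard part will be stitching the two regimes together cleanly and handling the corner of parameter space where $a-x$ is near $0$ (so $P_n(a-x)$ is near its minimum and the polygon $D_n$ degenerates toward the regular ideal-ish polygon) simultaneously with $x$ near its max $2\pi$ — there $P_4(x)$ is large but one needs a quantitative lower bound matching $P_n(a)$; I anticipate this forces the careful case analysis on the relation between $\min(a,2\pi)$ and the threshold $\min(a-\pi,2\pi)$ appearing in Lemma~\ref{lemma:f_derivative_to_x_positive_n_bigger_than_8}.
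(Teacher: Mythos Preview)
Your regime split and list of tools are close to the paper's, but two steps are not pinned down and the paper resolves them differently than you suggest.

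First, for $n=5,6$ you dismiss concavity of $P_n$ too quickly. On the restricted interval $\bigl[0,(\tfrac{n}{2}-2)\pi\bigr]$ the function $P_n$ \emph{is} strictly concave when $n\leqslant 6$: writing $(P_n')^2=\frac{\kappa(1-y)}{y(\kappa-y)}$ with $y=\cos^2\tfrac{2\pi+x}{2n}$ and $\kappa=\cos^2\tfrac{\pi}{n}$, the relevant $y$-range is $[\tfrac12,\kappa)$, on which this expression is increasing in $y$ exactly when $1-\sqrt{1-\kappa}\leqslant\tfrac12$, i.e.\ $n\leqslant 6$. Then $P_4(x)+P_n(a-x)\geqslant P_n(x)+P_n(a-x)\geqslant P_n(0)+P_n(a)$. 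For $n=7,8,9,10$, Lemma~\ref{lemma:Pn(x)_by_x_decreasing} gives $P_n(x)\geqslant\tfrac{x}{a}P_n(a)$ and $P_n(a-x)\geqslant\tfrac{a-x}{a}P_n(a)$, and adding these already yields $P_n(x)+P_n(a-x)\geqslant P_n(a)$; you point at this lemma but do not record this two-line conclusion, and your attempt to reduce to $n=5$ via Proposition~\ref{prop:prep}(\ref{prop:pmono}) would not work since both sides of \eqref{eqn:four_edge} move in the same direction with $n$.

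Second, for large $n$ you correctly note that Lemma~\ref{lemma:f_derivative_to_x_positive_n_bigger_than_8} only controls $0<x<\min(a-\pi,2\pi)$, leaving the strip $a-\pi\leqslant x\leqslant\min(a,2\pi)$ uncovered whenever $a<3\pi$. Your suggested patches (endpoint estimates where ``$P_4(x)$ is large'', or monotonicity in $a$) are not developed, and it is unclear they can be made to close the gap. The paper sidesteps this region entirely. For $a\geqslant 3\pi$ one has $\min(a-\pi,2\pi)=2\pi=\min(a,2\pi)$, so the derivative lemma already covers the full $x$-range, as you say. For $a\leqslant 3\pi$ the paper instead \emph{inducts on $n$ from the base case $n=10$} (whose domain is exactly $a\leqslant 3\pi$, handled by the previous paragraph). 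The induction step is precisely Proposition~\ref{prop:prep}(\ref{prop:pconcave}): since $P_n(t)-P_{n+1}(t)$ is increasing in $t$, one has $P_{n+1}(a-x)-P_n(a-x)\geqslant P_{n+1}(a)-P_n(a)$, and adding this to the inequality for $n$ yields it for $n+1$ at the same $(a,x)$. You mention Proposition~\ref{prop:prep}(\ref{prop:pconcave}) only for the sub-case $a<\tfrac{3}{2}\pi$, but the point is that this induction handles all of $a\leqslant 3\pi$ uniformly once $n=10$ is established, making any separate endpoint analysis unnecessary.
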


We make the following induction on $n$.

\begin{lemma}\label{cor:inequality_induction}
For $n\geqslant5$, if $a\in\gauss{0, \bracket{\frac{n}{2}- 2}\pi}$ and $x\in \gauss{0, \min(a,2\pi)}$ satisfies
$$P_4(x)+P_n(a-x)\geqslant P_n(a),$$
then 
$$P_4(x)+P_{n+1}(a-x)\geqslant P_{n+1}(a).$$
\end{lemma}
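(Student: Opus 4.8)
The plan is to induct on $n$ by comparing the two inequalities term-by-term, using the monotonicity statement of Proposition~\ref{prop:prep}\eqref{prop:pconcave}. Fix $n\geqslant 5$, $a\in[0,(\frac n2-2)\pi]$ and $x\in[0,\min(a,2\pi)]$, and assume $P_4(x)+P_n(a-x)\geqslant P_n(a)$. I want to deduce $P_4(x)+P_{n+1}(a-x)\geqslant P_{n+1}(a)$. Subtracting, it suffices to show
\[
P_n(a)-P_{n+1}(a)\;\geqslant\;P_n(a-x)-P_{n+1}(a-x),
\]
because then $P_4(x)+P_{n+1}(a-x)=P_4(x)+P_n(a-x)-\bigl(P_n(a-x)-P_{n+1}(a-x)\bigr)\geqslant P_n(a)-\bigl(P_n(a)-P_{n+1}(a)\bigr)=P_{n+1}(a)$, using the inductive hypothesis in the first step. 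So the whole lemma reduces to a monotonicity statement about the function $g(y):=P_n(y)-P_{n+1}(y)$, evaluated at $y=a$ versus $y=a-x$.

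Now invoke Proposition~\ref{prop:prep}\eqref{prop:pconcave}: $P_n(y)-P_{n+1}(y)$ is nonnegative and monotonically increasing in $y$ for $y\geqslant 0$. Since $x\geqslant 0$ we have $a-x\leqslant a$, and both $a$ and $a-x$ lie in $[0,\infty)$ (indeed $a-x\geqslant 0$ because $x\leqslant\min(a,2\pi)\leqslant a$), so monotonicity gives exactly $g(a)\geqslant g(a-x)$, which is the displayed inequality. One should double check the domain compatibility: $P_n$ is defined for arguments in $[0,n\pi-2\pi)$ and $P_{n+1}$ for arguments in $[0,(n+1)\pi-2\pi)$; since $a\leqslant(\frac n2-2)\pi<(n-2)\pi$ for $n\geqslant 5$, and $a-x\leqslant a$, both evaluations are legitimate, and likewise $a,a-x$ lie in the range where $(n+1)$-gons of the corresponding area exist. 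The argument $x$ of $P_4$ satisfies $x\leqslant 2\pi=4\pi-2\pi$, so $P_4(x)$ is also well defined.

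This is genuinely all there is to the induction step; the real content is packaged into Proposition~\ref{prop:prep}\eqref{prop:pconcave} (Sanki--Vadnere's Proposition 4.1), which one is allowed to quote. Consequently there is no serious obstacle in Lemma~\ref{cor:inequality_induction} itself — the only thing to be careful about is the bookkeeping of the intervals on which the $P_n$ are defined and making sure the monotonicity of $P_n-P_{n+1}$ is applied on the correct range, namely all of $[0,\infty)$ as stated. The harder work lies elsewhere: establishing the base case of the induction (some small value of $n$, presumably handled via Lemma~\ref{lemma:f_derivative_to_x_positive_n_bigger_than_8}, Lemma~\ref{lemma:Pn_second_derivative} and Lemma~\ref{lemma:Pn(x)_by_x_decreasing}) and, for Proposition~\ref{lem:four_edge}, verifying the equality case $x=0$. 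For the latter: when $x=0$ the inequality \eqref{eqn:four_edge} reads $P_4(0)+P_n(a)\geqslant P_n(a)$, and since $P_4(0)=2\cdot 4\cdot\arccosh(\cos(\pi/4)/\sin(\pi/4))=8\arccosh(1)=0$ it is an equality; conversely if $x>0$ then $P_4(x)>0$ strictly (the integrand in \eqref{eqn:P_derivative_to_x} is positive for $n=4$ on $(0,2\pi)$), and combined with the strict version of the above comparison one gets strict inequality in \eqref{eqn:four_edge}, so that the induction propagates strictness for $x>0$ as well.
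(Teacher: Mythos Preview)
Your proof is correct and is essentially identical to the paper's argument: both add and subtract $P_n(a-x)$, invoke the hypothesis $P_4(x)+P_n(a-x)\geqslant P_n(a)$, and then use Proposition~\ref{prop:prep}\eqref{prop:pconcave} (monotonicity of $P_n-P_{n+1}$) to compare the correction terms at $a$ and $a-x$. The only difference is cosmetic ordering of the two steps; your additional domain bookkeeping is fine and the tangential remarks about strictness pertain to Proposition~\ref{lem:four_edge} rather than to this lemma.
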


\begin{proof}
\begin{equation*}
\begin{aligned}
 & P_4(x)+P_{n+1}(a-x) \\
=& P_4(x) + P_n(a-x) + (P_{n+1}(a-x) -P_n(a-x) )\\
\ge& P_4(x) + P_n(a-x) + (P_{n+1}(a) -P_n(a) )  & \bracket{\text{By Proposition \ref{prop:prep}} (\ref{prop:pconcave})}\\
\ge& P_4(0) + P_n(a)+ (P_{n+1}(a) -P_n(a) ) & \bracket{\text{By induction}}\\ 
=& P_{n+1}(a). 
\end{aligned}
\end{equation*}
\end{proof}

\begin{proof}[Proof of Proposition \ref{lem:four_edge}]
By Proposition \ref{prop:prep}, $P_4(x)+P_m(a-x)\geqslant P_m(x)+P_m(a-x)$. 
$P_n(x)$ satisfies
\begin{equation*}
(P^\prime_n(x))^2=\frac{\dps{\cos^2\frac{\pi}n\tan^2\frac{2\pi+x}{2n}}}{\dps{\cos^2\frac{\pi}n-\cos^2\frac{2\pi+x}{2n}}}=\frac{\dps{\cos^2\frac{\pi}{n}\cdot\bracket{1-\cos^2\frac{2\pi+x}{2n}}}}{\dps{\cos^2\frac{2\pi+x}{2n}\cdot\bracket{\cos^2\frac{\pi}n-\cos^2\frac{2\pi+x}{2n}}}}.
\end{equation*}
Let $y=\cos^2\frac{2\pi+x}{2n}$ and $\kappa=\cos^2\frac{\pi}{n}$, then $y\in\left[\frac12,\kappa\right)$. The function $\dps{\frac{\kappa(1-t)}{t(\kappa-t)}}$ is strictly increasing on $(1-\sqrt{1-\kappa}, \kappa)$. It follows that $P_n^\prime(x)$ is decreasing and $P_n^{\prime\prime}(x)<0$ on $\left(0, \bracket{\frac{n}2-2}\pi\right)$ and  $P_n(x)$ is strictly concave on $\left[0, \bracket{\frac{n}2-2}\pi\right]$ when $n=5,6$. We remark that $1-\sqrt{1-\kappa}\leqslant\frac12$ requires $n\leqslant6$ and the concavity does not work for $n>6$. Therefore, for $n=5,6$ we have
$$P_n(x)+P_n(a-x)\geqslant P_n(0)+P_n(a) = P_n(a).$$ 

For $n=7,8,9,10$, by Lemma \ref{lemma:Pn(x)_by_x_decreasing}, we have
$$\frac{P_n(x)}{x}\geqslant \frac{P_n(a)}{a},\quad\text{and} \quad \frac{P_n(a-x)}{a-x}\geqslant \frac{P_n(a)}{a}.$$
Hence
$$P_n(x)+P_n(a-x)\geqslant \frac{x}{a}\cdot P_n(a)+\frac{a-x}{a}\cdot P_n(a)=P_n(a),$$
and \eqref{eqn:four_edge} follows from $P_4(x)\geqslant P_n(x)$ for $n>4$.

For $n>10$, when $a\leqslant3\pi$, the conclusion comes from the result for $n=10$ and induction on $n$ by Lemma \ref{cor:inequality_induction}. When $a\geqslant 3\pi$, the lemma follows from $f(n,a,0)\equiv0$ and $\frac{\partial f}{\partial x}(n,a,x)>0$ when $0<x<2\pi$ (Lemmas \ref{lemma:f_derivative_to_x_positive_n_bigger_than_8}).

In all of the three cases, the concerned functions are strictly increasing or strictly decreasing or strictly concave. Thus the equality holds only when $x=0$. 
\end{proof}

Now we are ready to prove the isoperimetric theorem when $k=2$. 

\begin{prop}
Let $D_1$, $D_2$ be regular hyperbolic $m_1$- and $m_2$-gons with positive areas. Let $D$ be a regular hyperbolic $m$-gon with inner angle $\theta \geqslant \frac{\pi}{2}$. If $m+4 = m_1+m_2$ and $\area(D)=\area(D_1)+\area(D_2)$, then 
$$\perim(D)<\perim(D_1) + \perim(D_2).$$

	\label{prop:k=2}
\end{prop}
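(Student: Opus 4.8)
The plan is to reduce the two‑polygon inequality to the already‑established one‑variable estimate in Proposition \ref{lem:four_edge} by ``splitting off edges'' from one polygon into a square at a time. Write $a_i = \area(D_i)$ and $a = a_1 + a_2$, and recall that a regular $m_i$‑gon of positive area forces $m_i \geqslant 5$ (by Proposition \ref{prop:prep}(\ref{prop:phimono}) with $\theta < \pi - \tfrac{2\pi}{m_i}$, i.e. $m_i > \tfrac{2\pi}{\pi-\theta_i}$); so both $m_1, m_2 \geqslant 5$ and $m = m_1 + m_2 - 4 \geqslant 6$. Moreover $\perim(D_i) = P_{m_i}(a_i)$ and $\perim(D) = P_m(a)$, and the hypothesis $\theta \geqslant \tfrac\pi2$ on $D$ translates, via $a = (\pi-\theta)m - 2\pi$, into $a \leqslant (\tfrac m2 - 2)\pi$, which is exactly the range in which Proposition \ref{lem:four_edge} applies. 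I would also record $a_i \leqslant (\tfrac{m_i}{2}-2)\pi$ for each $i$ (each $D_i$ is a genuine hyperbolic polygon so its inner angle is $\geqslant 0$).

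\medskip
\emph{Step 1: the case $m_1 = m_2 = 5$, i.e. $m = 6$.} Here I want $P_6(a) < P_5(a_1) + P_5(a_2)$. By Proposition \ref{prop:prep}(\ref{prop:pconcave}), $P_5 \geqslant P_6$, so it suffices to prove $P_6(a_1 + a_2) < P_6(a_1) + P_6(a_2)$ — i.e. strict superadditivity of $P_6$ on the relevant interval — and then note that at least one of the inequalities $P_5(a_i) \geqslant P_6(a_i)$ is strict (it is strict whenever $a_i > 0$, by the strict monotonicity in $n$ from Proposition \ref{prop:prep}(\ref{prop:pmono})). Superadditivity of $P_6$ follows from $P_6(0) = 0$ together with the concavity of $P_6$ on $[0,(\tfrac62-2)\pi] = [0,\pi]$ established inside the proof of Proposition \ref{lem:four_edge} (the computation there shows $P_n'' < 0$ for $n = 5,6$): a concave function vanishing at $0$ is superadditive. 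One must check the domains line up — since $a_i \leqslant (\tfrac{m_i}{2}-2)\pi = \tfrac\pi2$, we have $a = a_1 + a_2 \leqslant \pi$, inside the interval of concavity — which is exactly the constraint coming from $\theta \geqslant \tfrac\pi2$.

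\medskip
\emph{Step 2: induction peeling off squares.} For the general case I induct on $m_1 + m_2$. If $m_2 > 5$ (say), apply Proposition \ref{lem:four_edge} with $n = m_2$, total area $a_2$, and split parameter $x = a_2'$ for a suitable $0 \leqslant a_2' \leqslant \min(a_2, 2\pi)$, to get
\begin{equation*}
P_{m_2}(a_2) \leqslant P_4(a_2') + P_{m_2}(a_2 - a_2').
\end{equation*}
Choosing $a_2'$ so that the ``new'' configuration $D_1, D_2', D_{\square}$ (a regular $(m_2-1)$-gon of area $a_2 - a_2'$ hmm — wait, $P_4$ is the perimeter of a square of area $a_2'$) still satisfies the Gauss–Bonnet bookkeeping: replace the pair $(D_1, D_2)$ by the triple whose edge count is $m_1 + 4 + m_2$ and whose total area is $a$, so that $m = (m_1) + (4) + (m_2) - 8 = m_1 + m_2 - 4$ is unchanged. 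Then combine with the $k=2$ sub‑result applied to the square and one of the pieces. The cleanest route is probably a \emph{direct} induction: from $P_{m_2}(a_2) \leqslant P_4(a_2') + P_{m_2-1}((a_2 - a_2'))$ — using Proposition \ref{prop:prep}(\ref{prop:pconcave}) to drop $P_{m_2}$ to $P_{m_2-1}$ on the second term — we reduce $(m_1,m_2) \rightsquigarrow (m_1, m_2-1)$ plus an extra square, and the square contributes a $P_4$ term which can be absorbed using $P_4 \geqslant P_{m}$ (Proposition \ref{prop:prep}(\ref{prop:pconcave}) again) against the corresponding increment in $P_m(a)$ coming from $m \to m-1$. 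Iterating down to $m_1 = m_2 = 5$ and invoking Step 1 closes the induction, and strictness is preserved throughout because each application either uses a strict instance of monotonicity in $n$ (Proposition \ref{prop:prep}(\ref{prop:pmono}), strict for positive area) or the strict concavity in Step 1.

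\medskip
\emph{Main obstacle.} The delicate point is the bookkeeping of \emph{areas} when peeling: I must verify that the intermediate split parameter $x = a_2'$ can always be chosen in the admissible range $[0,\min(a_2,2\pi)]$ while keeping every intermediate polygon a \emph{bona fide} regular hyperbolic polygon (positive area, inner angle in $[0,\pi)$) \emph{and} keeping the total area $\leqslant (\tfrac m2 - 2)\pi$ for whatever $m$ currently plays the role of the target. Concretely: after $j$ peels the target polygon has $m - j$ fewer... no, the target $D$ is fixed; rather the ``source'' side has more pieces, and I need $a \leqslant (\tfrac{m_{\text{cur}}}{2}-2)\pi$ at each stage — but $m_{\text{cur}}$ for the non‑square pieces only \emph{decreases}, which tightens the constraint, so I should instead peel squares off while \emph{increasing} effective edge count, or equivalently run Proposition \ref{lem:four_edge} in the direction that replaces $P_n(a)$ by $P_4 + P_n(a-x)$ so the large polygon keeps its edge count. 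Getting this direction and the resulting area inequalities consistent — in particular ensuring $a_2 - a_2' \geqslant 0$ and that one never needs $x > 2\pi$ (handled by splitting $a_2$ into at most $\lceil a_2/2\pi\rceil$ pieces) — is the one genuinely fiddly part; everything else is assembly of Propositions \ref{prop:prep} and \ref{lem:four_edge}.
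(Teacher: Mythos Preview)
Your Step 2 does not work, and the problem is not the area bookkeeping you flag as the ``main obstacle'' but the \emph{direction} of the inequalities. You write
\[
P_{m_2}(a_2)\ \leqslant\ P_4(a_2') + P_{m_2-1}(a_2-a_2'),
\]
and this is indeed what Proposition~\ref{lem:four_edge} together with $P_{m_2}\leqslant P_{m_2-1}$ gives. But $P_{m_2}(a_2)$ sits on the \emph{right-hand side} of the target inequality $\perim(D)<\perim(D_1)+\perim(D_2)$, so bounding it \emph{above} is useless. Your attempted absorption, ``$P_4\geqslant P_m$ against the increment $P_m(a)$ coming from $m\to m-1$'', amounts to needing $P_{m_2-1}(a_2-a_2')+P_4(a_2')\leqslant P_{m_2}(a_2)$, which is the \emph{reverse} of what you can prove. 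Moreover, even invoking Proposition~\ref{lem:four_edge} with $n=m_2$ and total area $a_2$ already assumes $a_2\leqslant(\tfrac{m_2}{2}-2)\pi$, i.e.\ $\theta_2\geqslant\tfrac\pi2$, which is not part of the hypotheses (only $\theta\geqslant\tfrac\pi2$ for the target $D$ is). A smaller but related slip: positive area does \emph{not} force $m_i\geqslant5$; a regular $4$-gon with $\theta_i<\tfrac\pi2$ has positive area, so your base case must include $m_i=4$.

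The paper avoids all of this by a different reduction. It fixes the inner angles $\theta_1,\theta_2$ and the target edge count $m$, and regards $F(m_1)=\phi_{\theta_1}(m_1)+\phi_{\theta_2}(m+4-m_1)$ as a function of a \emph{real} variable $m_1$. By Proposition~\ref{prop:prep}(\ref{prop:phimono}) this is strictly concave, so its minimum over the admissible interval $[\max(4,\tfrac{2\pi}{\pi-\theta_1}),\,\min(m,\,m+4-\tfrac{2\pi}{\pi-\theta_2})]$ is attained at an endpoint. At the endpoint $m_1=4$ one lands exactly on Proposition~\ref{lem:four_edge}; at the degenerate endpoint $m_1=\tfrac{2\pi}{\pi-\theta_1}$ the polygon $D_1$ has zero area and perimeter, and the conclusion follows from strict monotonicity of $P_n(a)$ in $n$ (Proposition~\ref{prop:prep}(\ref{prop:pmono})). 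No induction on edge counts, no peeling, and no need for $\theta_i\geqslant\tfrac\pi2$. Your Step~1 argument for $m_1=m_2=5$ via concavity of $P_6$ is correct and pleasant, but it does not generalize: by Lemma~\ref{lemma:Pn_second_derivative}, $P_n$ is \emph{not} concave on the whole relevant interval once $n\geqslant7$, which is exactly why the paper switches from varying the area to varying the edge count.
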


\begin{proof}
Let $\theta_1$ and $\theta_2$ be the inner angles of $D_1$ and $D_2$ respectively.
By Gauss-Bonnet theorem, $\area(D)=\area(D_1)+\area(D_2)$ is equivalent to 
\begin{equation}\label{equ:area}
	(\pi-\theta)m - 2\pi = (\pi-\theta_1)m_1 - 2\pi + (\pi-\theta_2)(m+4-m_1) - 2\pi. 
\end{equation}

	Consider the function
	\begin{equation}
		F(m_1) = \phi_{\theta_1}(m_1)+\phi_{\theta_2}(m_2) = \phi_{\theta_1}(m_1)+\phi_{\theta_2}(m+4-m_1).
		\label{equ:phi}
	\end{equation}
	
	We treat $\theta_1, \theta_2, m$ as constants, then (\ref{equ:phi}) is a function in $m_1$. Our aim is to prove the minimum of this function in its domain is greater or equal to $\phi_\theta(m)$. 

	By Proposition \ref{prop:prep}, $\phi_{\theta_1}(m_1)+\phi_{\theta_2}(m+4-m_1)$ is defined on $\{m_1|m_1 \geqslant \max(4,\frac{2\pi}{\pi-\theta_1}), m+4-m_1 \geqslant \max(4,\frac{2\pi}{\pi-\theta_2}) \}$, namely $m_1\in [\max(4,\frac{2\pi}{\pi-\theta_1}),\min(m, m+4 - \frac{2\pi}{\pi-\theta_2})] $, and is strictly concave in this domain. 
	Therefore, the minimum of $\phi_{\theta_1}(m_1)+\phi_{\theta_2}(m+4-m_1)$ is obtained at the domain end points $\max(4,\frac{2\pi}{\pi-\theta_1})$ or $\min(m, m+4 - \frac{2\pi}{\pi-\theta_2})$. 

	First we consider the case $m_1 = \max(4,\frac{2\pi}{\pi-\theta_1})$. 

	If $\max(4,\frac{2\pi}{\pi-\theta_1}) = 4$, we consider $F(4) = \phi_{\theta_1}(4) + \phi_{\theta_2}(m)$. We let $x = area(D_1)$,  $a = area(D)$ and $a-x = area(D_2)$. Then $\phi_{\theta_1}(4) = P_4(x)$, $\phi_{\theta_2}(m) = P_m(a-x)$ and $\phi_\theta(m) = P_m(a)$. By Proposition \ref{lem:four_edge}, $P_4(x) + P_m(a-x) \geqslant P_m(a)$, namely $F(4) = \phi_{\theta_1}(4) + \phi_{\theta_2}(m) \geqslant \phi_\theta(m)$. 

	If $\max(4,\frac{2\pi}{\pi-\theta_1}) = \frac{2\pi}{\pi-\theta_1}$, we consider $F(\frac{2\pi}{\pi-\theta_1})$. Then $\phi_{\theta_1}(\frac{2\pi}{\pi-\theta_1}) = 0$, $F(\frac{2\pi}{\pi-\theta_1}) = 0 + \phi_{\theta_2}(m+4 - \frac{2\pi}{\pi-\theta_1})$. 
	By (\ref{equ:area}), let $m_1 = \frac{2\pi}{\pi-\theta_1}$, then 
	\[
	(\pi-\theta)m - 2\pi =   (\pi-\theta_2)\left(m+4-\frac{2\pi}{\pi-\theta_1}\right) - 2\pi. 
	\]
Let $a = 
(\pi-\theta)m - 2\pi =   (\pi-\theta_2)\left(m+4-\frac{2\pi}{\pi-\theta_1}\right) - 2\pi$. Then $\phi_\theta(m) = P_m(a)$ and   $F(\frac{2\pi}{\pi-\theta_1}) = 0 + \phi_{\theta_2}(m+4 - \frac{2\pi}{\pi-\theta_1}) = P_{m+4 - \frac{2\pi}{\pi-\theta_1}}(a)$. Since $\max(4,\frac{2\pi}{\pi-\theta_1}) = \frac{2\pi}{\pi-\theta_1}$, $m+4 - \frac{2\pi}{\pi-\theta_1} < m$. Therefore by Proposition \ref{prop:prep}(\ref{prop:pmono}), $ P_{m+4 - \frac{2\pi}{\pi-\theta_1}}(a) > P_m(a)$, namely $F(\frac{2\pi}{\pi-\theta_1}) > \phi_\theta(m)$. 

If $m_1 = \min(m, m+4 - \frac{2\pi}{\pi-\theta_2})$, the proof is exactly the same as the case $m_1 = \max(4,\frac{2\pi}{\pi-\theta_1})$. 
\end{proof}

\begin{cor}
Let $D$ be a regular hyperbolic $m$-gon with inner angle $\theta\geqslant\frac\pi2$, and $D_1$ and $D_2$ be hyperbolic $m_1$- and $m_2$-gons respectively, $m_1+m_2 = m+4$. If $area(D_1)+ area(D_2) = area(D)$, $perim(D_1) + perim(D_2) = perim(D)$, $m_1 \geqslant m_2$, then $D_1 =D$, $D_2$ is a degenerated polygon with area and perimeter $0$. 
	\label{cor:equal}
\end{cor}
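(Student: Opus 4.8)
The plan is to reduce the Corollary to the strict inequality of Proposition~\ref{prop:k=2} by replacing $D_1$ and $D_2$ with the regular polygons having the same numbers of edges and the same areas. The one external input I use is the classical isoperimetric inequality for hyperbolic polygons: every hyperbolic $n$-gon $E$ with $\area(E)=a>0$ satisfies $\perim(E)\geqslant P_n(a)$, with equality exactly when $E$ is the regular $n$-gon (and $P_n(0)=0$, so a degenerate polygon is consistent with perimeter $0$). Throughout I keep the standing hypothesis $m_1,m_2\geqslant 4$. First I would prove that $\area(D_1)$ or $\area(D_2)$ is $0$. If not, then $0<\area(D_i)<(m_i-2)\pi$, so the regular $m_i$-gon $D_i^{\ast}$ of area $\area(D_i)$ exists and $\perim(D_i^{\ast})\leqslant\perim(D_i)$. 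Since $D_1^{\ast},D_2^{\ast}$ are regular with positive areas, $m_1+m_2=m+4$, and $\area(D_1^{\ast})+\area(D_2^{\ast})=\area(D)$, Proposition~\ref{prop:k=2} gives $\perim(D)<\perim(D_1^{\ast})+\perim(D_2^{\ast})\leqslant\perim(D_1)+\perim(D_2)$, contradicting $\perim(D_1)+\perim(D_2)=\perim(D)$. Hence some $\area(D_i)=0$; that factor degenerates to a point with perimeter $0$, and the other factor $D_j$ has $\area(D_j)=\area(D)$ and $\perim(D_j)=\perim(D)$.

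Next I would show that the degenerate factor has exactly four edges. Suppose it had $m_i\geqslant 5$ edges, so $m_j=m+4-m_i\leqslant m-1<m$. Since $\theta\geqslant\frac\pi2$, we have $\area(D)=(\pi-\theta)m-2\pi\leqslant(\frac m2-2)\pi$, whereas $m_1\geqslant m_2$ forces $m_j\geqslant\frac{m+4}{2}=\frac m2+2>\frac{\area(D)}{\pi}+2$; as $m>m_j$ as well, Proposition~\ref{prop:prep}(\ref{prop:pmono}) applies and $P_{m_j}(\area(D))>P_m(\area(D))$. But then the polygon isoperimetric inequality gives $\perim(D_j)\geqslant P_{m_j}(\area(D_j))=P_{m_j}(\area(D))>P_m(\area(D))=\perim(D)$, contradicting $\perim(D_j)=\perim(D)$. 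Hence the degenerate factor is a (degenerate) $4$-gon and $m_j=m$.

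It then remains to identify $D_j$ with $D$. The polygon $D_j$ is a hyperbolic $m$-gon with $\area(D_j)=\area(D)$ and $\perim(D_j)=\perim(D)=P_m(\area(D))$, so it realizes the least perimeter among hyperbolic $m$-gons of area $\area(D)$; by the equality case of the polygon isoperimetric inequality, $D_j$ is the regular $m$-gon of that area, i.e.\ $D_j=D$. If $m>4$, then the degenerate $4$-gon is not the factor labelled $D_1$ (because $m_1\geqslant m_2$), so $D_1=D$ and $D_2$ is the degenerate polygon; if $m=4$, then $m_1=m_2=4$ and what we have shown is that one of $D_1,D_2$ is $D$ and the other a point, which up to relabelling is again the asserted conclusion.

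The step I expect to carry the weight is the reduction in the first paragraph: replacing an arbitrary hyperbolic polygon by the regular one of the same area without increasing the perimeter. If the hyperbolic polygon isoperimetric inequality, together with its equality case, is invoked as known, this is immediate; otherwise that strict inequality would itself require proof, and that is the genuinely nontrivial point. The remaining issues are bookkeeping: confirming that a zero-area factor forces perimeter $0$ and exactly four edges, and checking that the hypothesis $m_j>\frac{\area(D)}{\pi}+2$ of Proposition~\ref{prop:prep}(\ref{prop:pmono}) holds in the second paragraph — which is precisely where the assumptions $\theta\geqslant\frac\pi2$ and $m_1\geqslant m_2$ are used.
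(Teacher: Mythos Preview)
Your overall strategy is sound and is a genuine alternative to the paper's. The paper argues by going back into the proof of Proposition~\ref{prop:k=2}: the function $F(m_1)=\phi_{\theta_1}(m_1)+\phi_{\theta_2}(m+4-m_1)$ is strictly concave, so equality in $F(m_1)\geqslant\phi_\theta(m)$ can only occur at an endpoint of the admissible interval, and chasing the endpoint cases yields $m_1=m$, $m_2=4$; then the equality clause of Proposition~\ref{lem:four_edge} gives $x=0$. You instead use the \emph{strict} inequality of Proposition~\ref{prop:k=2} as a black box, which forces one area to vanish immediately, and then settle the combinatorics ($m_j=m$, $m_i=4$) with Proposition~\ref{prop:prep}(\ref{prop:pmono}) and identify $D_j$ with $D$ via the equality case of the polygon isoperimetric inequality. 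Both routes depend on the classical hyperbolic polygon isoperimetric inequality to pass from arbitrary $D_i$ to regular ones; you make that step explicit and correctly flag it as the substantive external input, while the paper leaves it implicit.

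There is one slip in your second paragraph. The claim ``$m_1\geqslant m_2$ forces $m_j\geqslant\frac{m+4}{2}$'' is only valid when $j=1$, but at that stage you have not yet determined which factor is degenerate; if it were $D_1$ that degenerated, you would have $m_j=m_2\leqslant\frac{m+4}{2}$. Fortunately the bound you actually need, $m_j>\frac{\area(D)}{\pi}+2$, follows straight from Gauss--Bonnet applied to $D_j$ itself: since $D_j$ is a hyperbolic $m_j$-gon with $\area(D_j)=\area(D)>0$, one has $\area(D)<(m_j-2)\pi$. With that correction Proposition~\ref{prop:prep}(\ref{prop:pmono}) applies on the interval $(m_j,m)$ as you intend, and the rest of your argument goes through unchanged.
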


\begin{proof}
	By the proof of Proposition \ref{prop:k=2}, if $\phi_{\theta_1}(m_1)+ \phi_{\theta_2}(m_2) = \phi_{\theta}(m)$, then $m_1=m$ and $m_2 =4$. By Proposition \ref{lem:four_edge}, if $P_4(x)+P_m(a-x) = P_m(a)$, then $x=0$ and this corollary follows. 
\end{proof}

\subsection{General case}
We prove the isoperimetric theorem in this subsection by induction. 

\begin{lemma}
	Let $D_i$ be regular hyperbolic $m_i$-gons with inner angle $\theta_i$, $i = 1,...,k$. $\theta_1 \geqslant \theta_2 \geqslant ... \geqslant \theta_k$. Let $D$ be a regular hyperbolic $m$-gon with inner angle $\theta \geqslant \frac{\pi}{2}$. If 
	\begin{enumerate}
		\item $\sum_{i=1}^{k}\area(D_i) =\area(D)$ ; \label{cond:area}
		\item $m -4 =\sum_{i=1}^{k} m_i -4k $. \label{cond:euler}
	\end{enumerate}
	Then 
	\begin{enumerate}
		\item $\theta_1 \geqslant \frac{\pi}{2}$ ;
		\item $\theta_k \leqslant \theta$. 
	\end{enumerate}
	\label{lem:induction}
\end{lemma}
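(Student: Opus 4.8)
The plan is to extract both inequalities from the two hypotheses by exploiting the Gauss–Bonnet relation together with the concavity of $\phi_\theta$ in its first argument and monotonicity facts from Proposition \ref{prop:prep}. Write $a_i = \area(D_i) = (\pi - \theta_i)m_i - 2\pi$ and $a = \area(D) = (\pi-\theta)m - 2\pi$. Condition \eqref{cond:euler} says $m - 4 = \sum_i (m_i - 4)$, and condition \eqref{cond:area} says $a = \sum_i a_i$. Adding these two, after multiplying the Euler relation by a suitable factor, one gets a single affine identity relating the $\theta_i$, $m_i$ to $\theta$, $m$; the point is that $a + 2\pi(\text{something}) $ combines cleanly. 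Concretely, since $a_i + 2\pi = (\pi-\theta_i)m_i$ and $a + 2\pi = (\pi-\theta)m$, summing \eqref{cond:area} gives $(\pi-\theta)m - 2\pi = \sum_i\big[(\pi-\theta_i)m_i - 2\pi\big]$, i.e. $(\pi-\theta)m = \sum_i (\pi-\theta_i)m_i - 2\pi(k-1)$. Using \eqref{cond:euler} in the form $m = \sum_i m_i - 4(k-1)$, subtract $(\pi - \tfrac{\pi}{2})$ times it — actually the clean move is to eliminate $m$ and compare weighted averages.

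For the first claim, $\theta_1 \geqslant \frac{\pi}{2}$: suppose for contradiction that every $\theta_i < \frac{\pi}{2}$ (since $\theta_1$ is the largest, $\theta_1 < \frac\pi2$ forces this). Then $\pi - \theta_i > \frac{\pi}{2}$ for all $i$, so $(\pi-\theta)m = \sum_i(\pi-\theta_i)m_i - 2\pi(k-1) > \frac{\pi}{2}\sum_i m_i - 2\pi(k-1) = \frac{\pi}{2}\big(m + 4(k-1)\big) - 2\pi(k-1) = \frac{\pi}{2}m$, whence $\pi - \theta > \frac{\pi}{2}$, i.e. $\theta < \frac{\pi}{2}$, contradicting the hypothesis $\theta \geqslant \frac{\pi}{2}$. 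So $\theta_1 \geqslant \frac{\pi}{2}$.

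For the second claim, $\theta_k \leqslant \theta$: again use $(\pi-\theta)m = \sum_i(\pi-\theta_i)m_i - 2\pi(k-1)$. Since $\theta_k$ is the smallest, $\pi - \theta_i \leqslant \pi - \theta_k$ for all $i$, so $(\pi-\theta)m \leqslant (\pi-\theta_k)\sum_i m_i - 2\pi(k-1) = (\pi-\theta_k)\big(m + 4(k-1)\big) - 2\pi(k-1)$. Thus $(\pi-\theta)m \leqslant (\pi-\theta_k)m + \big(4(\pi-\theta_k) - 2\pi\big)(k-1) = (\pi-\theta_k)m + \big(2\pi - 4\theta_k\big)(k-1)$. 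If $k = 1$ this gives $\theta_k \leqslant \theta$ immediately. For $k \geqslant 2$ I need $2\pi - 4\theta_k \leqslant 0$, i.e. $\theta_k \geqslant \frac{\pi}{2}$; but that need not hold, so this crude bound is insufficient and the genuine argument must instead use that each $D_i$ is a regular polygon with $m_i \geqslant 4$ edges, hence by Proposition \ref{prop:prep}\eqref{prop:phimono} its area $a_i = (\pi-\theta_i)m_i - 2\pi$ is positive and in fact $m_i > \frac{2\pi}{\pi-\theta_i}$; combined with $a = \sum a_i \geqslant a_k$ and the monotonicity of the regular-polygon area in $\theta$ for fixed edge count, one compares $D$ with $D_k$ directly. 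The main obstacle I anticipate is precisely this second part: the naive averaging argument does not close the gap when some $\theta_i$ exceed $\frac\pi2$, so one must feed in the constraint $m_i \geqslant 4$ (equivalently $m_i \geqslant \lfloor \frac{2\pi}{\pi-\theta_i}\rfloor + 1$) and possibly argue by first reducing to the two-polygon case via Proposition \ref{prop:k=2} and induction on $k$, tracking how $\theta$ changes when two polygons are merged.
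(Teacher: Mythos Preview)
Your argument for $\theta_1\geqslant\frac{\pi}{2}$ is correct and is exactly the paper's proof in contrapositive form: both use the identity $(\pi-\theta)m=\sum_i(\pi-\theta_i)m_i-2\pi(k-1)$ (equivalently $(2k-2)\pi+m\theta=\sum_i m_i\theta_i$) and bound by the extremal angle.

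For $\theta_k\leqslant\theta$ there is a genuine gap, but the fix is far simpler than the machinery you propose. The missing ingredient is that you never invoked the hypothesis $\theta\geqslant\frac{\pi}{2}$ in this part. The paper's argument is a single line: from $(2k-2)\pi+m\theta=\sum_i m_i\theta_i$, bound the right side below by $\theta_k\sum_i m_i=(m+4k-4)\theta_k$, and bound the left side above by $(4k-4)\theta+m\theta=(m+4k-4)\theta$, where the inequality $(2k-2)\pi\leqslant(4k-4)\theta$ is exactly $\theta\geqslant\frac{\pi}{2}$. This gives $(m+4k-4)\theta\geqslant(m+4k-4)\theta_k$ and hence $\theta_k\leqslant\theta$.

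In your own contradiction style the same idea reads: suppose $\theta_k>\theta$, so \emph{every} $\theta_i>\theta$ and thus $\pi-\theta_i<\pi-\theta$; then your identity gives
\[
(\pi-\theta)m<(\pi-\theta)\bigl(m+4(k-1)\bigr)-2\pi(k-1),
\]
i.e.\ $0<(k-1)(2\pi-4\theta)$, which contradicts $\theta\geqslant\frac{\pi}{2}$ for $k\geqslant2$ (and $k=1$ is trivial). Your error was comparing against $\theta_k$ instead of $\theta$ in the bounding step. No concavity of $\phi_\theta$, no constraint $m_i\geqslant4$, and no induction via Proposition~\ref{prop:k=2} are needed here; the lemma is pure weighted-average arithmetic, symmetric to what you already did for the first claim.
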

\begin{proof}
	By Gauss-Bonnet theorem, $\area(D_i) = (\pi-\theta_i)m_i - 2\pi$. Therefore, the condition (\ref{cond:area}) is equivalent to 
	\[
		\pi\sum_{i=1}^{k} m_i - \sum_{i=1}^{k} m_i \theta_i - 2k\pi = (\pi-\theta)m - 2\pi. 
	\]
	Using the condition (\ref{cond:euler}), we get 
	\[
		(2k-2)\pi + m\theta = \sum_{i=1}^{k} m_i \theta_i. 
	\]
Then 
\begin{eqnarray*}
		(4k-4+m)\frac{\pi}{2} =	(2k-2)\pi + m \frac{\pi}{2} 
				      \leqslant (2k-2)\pi + m\theta 
				      = \sum_{i=1}^{k} m_i \theta_i 
				     \leqslant  \sum_{i=1}^{k} m_i \theta_1 = (m+4k-4)\theta_1.
\end{eqnarray*}
Hence $\theta_1 \geqslant \frac{\pi}{2}$. On the other hand
	\begin{eqnarray*}
		(4k-4+m)\theta =	(4k-4)\theta + m \theta 
				      \geqslant (2k-2)\pi + m\theta 
				      = \sum_{i=1}^{k} m_i \theta_i 
				     \geqslant  \sum_{i=1}^{k} m_i \theta_k = (m+4k-4)\theta_k.
	\end{eqnarray*}
	Hence $\theta_k \leqslant \theta$. 
\end{proof}
Now we are ready to prove the isoperimetric theorem
\begin{proof}[Proof of Theorem \ref{thm:isoperim}]
	Without loss of generality, we assume the inner angle of $D_i$ is $\theta_i$ and $\theta_1 \geqslant \theta_2 \geqslant ... \geqslant \theta_k$. We consider a series of hyperbolic regular polygons $ \widetilde{D}_1, \widetilde{D}_2,..., \widetilde{D}_k$. 
	$\widetilde{D}_j$ is defined as the polygon with $ \widetilde{m}_j:= \sum_{i=1}^{j} m_i -4j+4$ edges and area equals to $\sum_{i=1}^{j}\area(D_i)$. 
	\begin{lemma}
		The inner angle of $ \widetilde{D}_j $ (denoted as $ \widetilde{\theta_j}$) is greater or equal than $ \frac{\pi}{2}$. 
		\label{lem:angle}
	\end{lemma}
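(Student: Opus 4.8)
The plan is to translate the claim $\widetilde\theta_j\geqslant\pi/2$ into an elementary inequality among the angles $\theta_i$ and edge-numbers $m_i$, and then to deduce it from the monotonicity $\theta_1\geqslant\cdots\geqslant\theta_k$ together with the balance identity already extracted in the proof of Lemma \ref{lem:induction}.

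First I would set $\widetilde m_j=\sum_{i=1}^j m_i-4(j-1)$ (so $\widetilde m_j\geqslant 4$, since each $m_i\geqslant 4$) and apply Gauss-Bonnet, writing $\area(D_i)=(\pi-\theta_i)m_i-2\pi$ and $\area(\widetilde D_j)=(\pi-\widetilde\theta_j)\widetilde m_j-2\pi$. Substituting these into $\area(\widetilde D_j)=\sum_{i=1}^j\area(D_i)$ and simplifying gives $\widetilde\theta_j\,\widetilde m_j=\sum_{i=1}^j m_i\theta_i-2(j-1)\pi$, so the desired inequality $\widetilde\theta_j\geqslant\pi/2$ is equivalent to
\[
S_j:=\sum_{i=1}^j m_i\bracket{\theta_i-\frac{\pi}{2}}\geqslant 0.
\]
As a byproduct, $S_j\geqslant 0$ forces $\sum_{i=1}^j m_i\theta_i\geqslant\frac{\pi}{2}\sum_{i=1}^j m_i\geqslant 2j\pi>2(j-1)\pi$, i.e.\ $\widetilde\theta_j>0$; this is precisely what guarantees that $\widetilde D_j$ exists as a genuine regular polygon of the prescribed area, so proving $S_j\geqslant 0$ also makes the statement meaningful.

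Next I would record the endpoint case $j=k$: the computation already performed in the proof of Lemma \ref{lem:induction} gives $\sum_{i=1}^k m_i\theta_i=(2k-2)\pi+m\theta$ and $\sum_{i=1}^k m_i=m+4k-4$, so that $S_k=m\theta-\frac{\pi}{2}m=m\bracket{\theta-\frac{\pi}{2}}\geqslant 0$ because $\theta\geqslant\pi/2$. For a general $j$ I would then split into two cases. If $\theta_j\geqslant\pi/2$, monotonicity gives $\theta_i\geqslant\theta_j\geqslant\pi/2$ for every $i\leqslant j$, so every summand of $S_j$ is non-negative and $S_j\geqslant 0$. If $\theta_j<\pi/2$, monotonicity gives $\theta_i\leqslant\theta_j<\pi/2$ for every $i>j$, so $m_i(\theta_i-\frac{\pi}{2})\leqslant 0$ for $i=j+1,\dots,k$; hence $S_k=S_j+\sum_{i=j+1}^k m_i\bracket{\theta_i-\frac{\pi}{2}}\leqslant S_j$, and therefore $S_j\geqslant S_k\geqslant 0$. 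In either case $S_j\geqslant 0$, which is the claim.

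I do not expect a serious obstacle; the argument is short once the reformulation $S_j\geqslant 0$ is in hand. The one point requiring care is the direction of the inequality in the second case: it is the monotonicity of the $\theta_i$ that forces the tail $\sum_{i>j}$ to only decrease the running sum, so that non-negativity of the total $S_k$ --- which is where the hypothesis $\theta\geqslant\pi/2$ enters --- is inherited by every partial sum $S_j$. No use of the two-polygon isoperimetric inequality is needed here; on the contrary, Lemma \ref{lem:angle} is exactly the ingredient that will subsequently license applying Proposition \ref{prop:k=2} to each consecutive merge producing $\widetilde D_j$ from $\widetilde D_{j-1}$ and $D_j$.
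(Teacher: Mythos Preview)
Your proof is correct and takes a genuinely different route from the paper's. The paper proves the lemma by backward induction on $j$, starting from $\widetilde D_k=D$ and passing from $\widetilde D_j$ to $\widetilde D_{j-1}$ via two applications of Lemma~\ref{lem:induction} (once to $D_1,\dots,D_j$ to get $\theta_j\leqslant\widetilde\theta_j$, and once to the pair $\widetilde D_{j-1},D_j$), together with a short contradiction argument using the balance identity $\widetilde m_j\widetilde\theta_j+2\pi=\widetilde m_{j-1}\widetilde\theta_{j-1}+m_j\theta_j$. You instead bypass induction entirely by rewriting the claim as $S_j=\sum_{i\leqslant j}m_i(\theta_i-\frac\pi2)\geqslant 0$ and observing that $S_k=m(\theta-\frac\pi2)\geqslant 0$; then the monotonicity $\theta_1\geqslant\cdots\geqslant\theta_k$ makes the sequence of increments $m_i(\theta_i-\frac\pi2)$ change sign at most once, so every partial sum is bounded below by $\min(0,S_k)=0$. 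This is shorter and more transparent: it isolates exactly where the hypothesis $\theta\geqslant\frac\pi2$ enters (namely $S_k\geqslant 0$) and shows that Lemma~\ref{lem:induction} is not actually needed for Lemma~\ref{lem:angle}. The paper's inductive route, on the other hand, hews closer to the merge structure $\widetilde D_{j-1}\cup D_j\to\widetilde D_j$ that drives the subsequent application of Proposition~\ref{prop:k=2}. Your side remark that $S_j\geqslant 0$ also yields $\widetilde\theta_j>0$, hence the existence of $\widetilde D_j$, is a welcome bonus the paper leaves implicit.
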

	\begin{proof}
		We prove it by induction. 
		We observe that $\area(\widetilde{D}_k) = \sum_{i=1}^{k}\area(D_i)=\area(D)$ and $ \widetilde{m}_k = \sum_{i=1}^{k} m_i -4k+4 = m$. Therefore $\widetilde{D}_k$ is isometric to  $D$, hence $ \widetilde{\theta}_k = \theta \geqslant \frac{\pi}{2}$. 

		Assume $ \widetilde{\theta}_j \geqslant \frac{\pi}{2}$, then it is sufficient to prove that $ \widetilde{\theta}_{j-1} \geqslant \frac{\pi}{2}$. Applying Lemma \ref{lem:induction} to $D_1,D_2,...,D_j$, we have $\theta_j \leqslant \widetilde{\theta}_j$. 
		Then we consider $\widetilde{D}_{j-1}$, $D_j$ and $\widetilde{D}_j$. 
		We mention that from the definition of $ \widetilde{m}_j$ and $\area(\widetilde{D}_j) = \area(\widetilde{D}_{j-1})+\area(D_{j})$, we have 
		\begin{equation}
			\widetilde{m}_j +4 = \widetilde{m}_{j-1}+ m_j ; \widetilde{m}_j\widetilde{\theta}_j + 2\pi = \widetilde{m}_{j-1}\widetilde{\theta}_{j-1} + m_j \theta_j. 
		\label{equ:twopolygon}	
		\end{equation}
		If $\widetilde{\theta}_{j-1}\geqslant \theta_j$, then by Lemma \ref{lem:induction}, $\widetilde{\theta}_{j-1}\geqslant \frac{\pi}{2}$. If $\widetilde{\theta}_{j-1}< \theta_j$, then $\widetilde{\theta}_{j-1}< \theta_j \leqslant  \widetilde{\theta}_j$. By \eqref{equ:twopolygon}, 
		\[
			\widetilde{m}_j\widetilde{\theta}_j + 2\pi = \widetilde{m}_{j-1}\widetilde{\theta}_{j-1}+ m_j \theta_j > (\widetilde{m}_{j-1}+ m_j) \widetilde{\theta}_j= (\widetilde{m}_j+4)\widetilde{\theta}_j. 
		\]
		Hence $ \widetilde{\theta}_j< \frac{\pi}{2}$, which contradicts to the assumption $ \widetilde{\theta_j} \geqslant \frac{\pi}{2}$. 
	\end{proof}
	The $k=2$ case has been proved in Proposition \ref{prop:k=2}. Assume this inequality is true for $k-1$ polygons. The remaining work is to prove the isoperimetric theorem for $k$ polygons $D_1,...,D_k$. By Lemma \ref{lem:angle}, the inner angle $ \widetilde{\theta}_2$ of $ \widetilde{D}_2$ is greater or equal than $ \frac{\pi}{2}$. Therefore by Proposition \ref{prop:k=2}, $$\perim( \widetilde{D}_2) \leqslant\perim(D_1)+\perim(D_2). $$
Then for the polygons $ \widetilde{D}_2, D_3,...,D_k$, it is easy to check that $$ \widetilde{m}_2 + \sum_{i=3}^k m_i - 4(k-1) = m -4.$$ Therefore by induction, 
$$\perim(D) \leqslant \perim( \widetilde{D}_2) + \sum_{i=3}^k \perim(D_i) \leqslant \sum_{i=1}^k \perim(D_i) .$$ 
If $\perim(D) = \sum_{i=1}^k\perim(D_i) $, then 
$$\perim(D) = \perim( \widetilde{D}_2) + \sum_{i=3}^k\perim(D_i) = \sum_{i=1}^k\perim(D_i) .$$ 
Therefore $$\perim( \widetilde{D}_2) = \perim(D_1) + \perim(D_2).$$ 
By Corollary \ref{cor:equal}, $\widetilde{D}_2$ equals $D_1$ or $D_2$ and the other polygon is a quadrilateral with $0$ area and perimeter. Without loss of generality, we assume $\widetilde{D}_2= D_1$. Thus 
$$\perim(D) =\perim( \widetilde{D}_2) + \sum_{i=3}^k \perim(D_i) = \perim(D_1) + \sum_{i=3}^k\perim(D_i).$$
By induction, $k=1$ and $D= D_1$. 

\end{proof}

The following example shows that Theorem \ref{thm:isoperim} does not hold if triangles are allowed.

\begin{example}\label{emp:triangle_fail}
Let $D_1$ be a regular hexagon with area $x$ and $D_2$ be a regular triangle with area $a-x$. Let $D$ be a regular pentagon with area $a$. Then the conditions \eqref{item:thm_isoperim_gauss} and \eqref{item:thm_isoperim_area} in Theorem \ref{thm:isoperim} are satisfied. If the isoperimetric inequality holds, then
\begin{equation}\label{eqn:example_triangle_fail}
P_{6} (x) + P_3(a-x) \geqslant P_5(a).
\end{equation}

However, by Proposition \ref{prop:prep}(\ref{prop:pmono}), we have
$$\lim_{x\to a-}\bracket{P_6(x)+P_3(a-x)}=P_6(a)<P_5(a)$$
Thus for $x$ sufficiently close to $a$, we have $P_{6} (x) + P_3(a-x) < P_5(a)$. In fact, a direct computation show that $P_6(4.99)+P_3(0.01)+\frac12<P_5(5)$, which fails \eqref{eqn:example_triangle_fail}.
\end{example}

\section{Shortest filling geodesics on hyperbolic surfaces}\label{sec:minimal}

In this section, we first construct a hyperbolic surface $X \in\mathcal{M}_g$ and a filling geodesic $\gamma \subset X$ so that $X\backslash\gamma$ is a right-angled regular hyperbolic $(8g-4)$-gon, which implies that the minimal length in Question 1 is at most $\frac12P_{8g-4}$ . We then prove the main results by combining results in previous sections.

For a genus $g$ surface $S_g$, we have a filling curve $\alpha \subset S_g$ as indicated in Figure \ref{fig:shortest_candidate}. 
\begin{figure}[htbp]
\centering
\includegraphics[width=.96\textwidth]{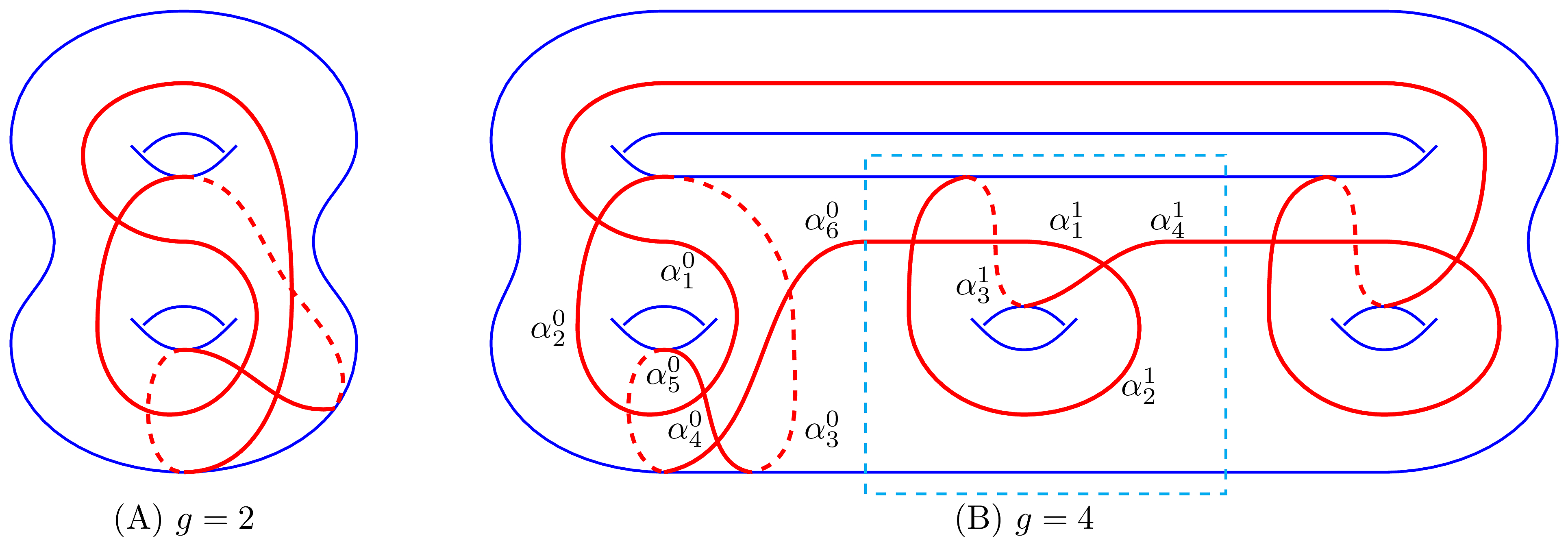}
\caption{A closed filling geodesic with $(2g-1)$ self-intersections. For $g>2$, the part in the dashed rectangle is the \emph{building block} and there are $g-3$ copies. The double points separate $\alpha$ into edges labelled along the curve as in (B). For conciseness, we only label some edges and the other edges are labelled accordingly.}
\label{fig:shortest_candidate}
\end{figure}
When $g=2$ (Figure \ref{fig:shortest_candidate}(A)), $\alpha$ has 3 double self-intersection points. When $g>2$ (Figure \ref{fig:shortest_candidate}(B)), the curve $\alpha$ have $g-3$ copies of the building block and has $2g-3$ double self-intersection points.  It is not hard to see that $\alpha$ is filling and $S_g\backslash\alpha$ is a $(8g-4)$-gon. Hence $S_g$ can be obtained from a $(8g-4)$-gon $Q$ with edges labelled clockwise as $\alpha^0_6,\alpha^0_3,\alpha^0_1,\alpha^0_4,\alpha^0_6, \alpha^1_3, \alpha^1_1, \alpha^1_2, \alpha^1_3, \alpha^1_1, \alpha^1_4,\cdots,\alpha^{g-2}_3,\alpha^{g-2}_1,\alpha^{g-2}_2,\alpha^{g-2}_3,\alpha^{g-2}_1, \alpha^{g-2}_4, \linebreak \alpha^0_3,\alpha^0_5, \alpha^0_1,\alpha^0_2,\alpha^0_5, 
\alpha^0_4, \alpha^0_2, \alpha^{g-2}_4,\alpha^{g-2}_2,\cdots,\alpha^1_4,\alpha^1_2$ by gluing the edges with the same labels (the labeling can be read from Figure \ref{fig:shortest_candidate}). The genus $2$ case is shown in Figure \ref{fig:genus_2_12_gon}. 
\begin{figure}[htbp]
\centering
\includegraphics[width=.4\textwidth]{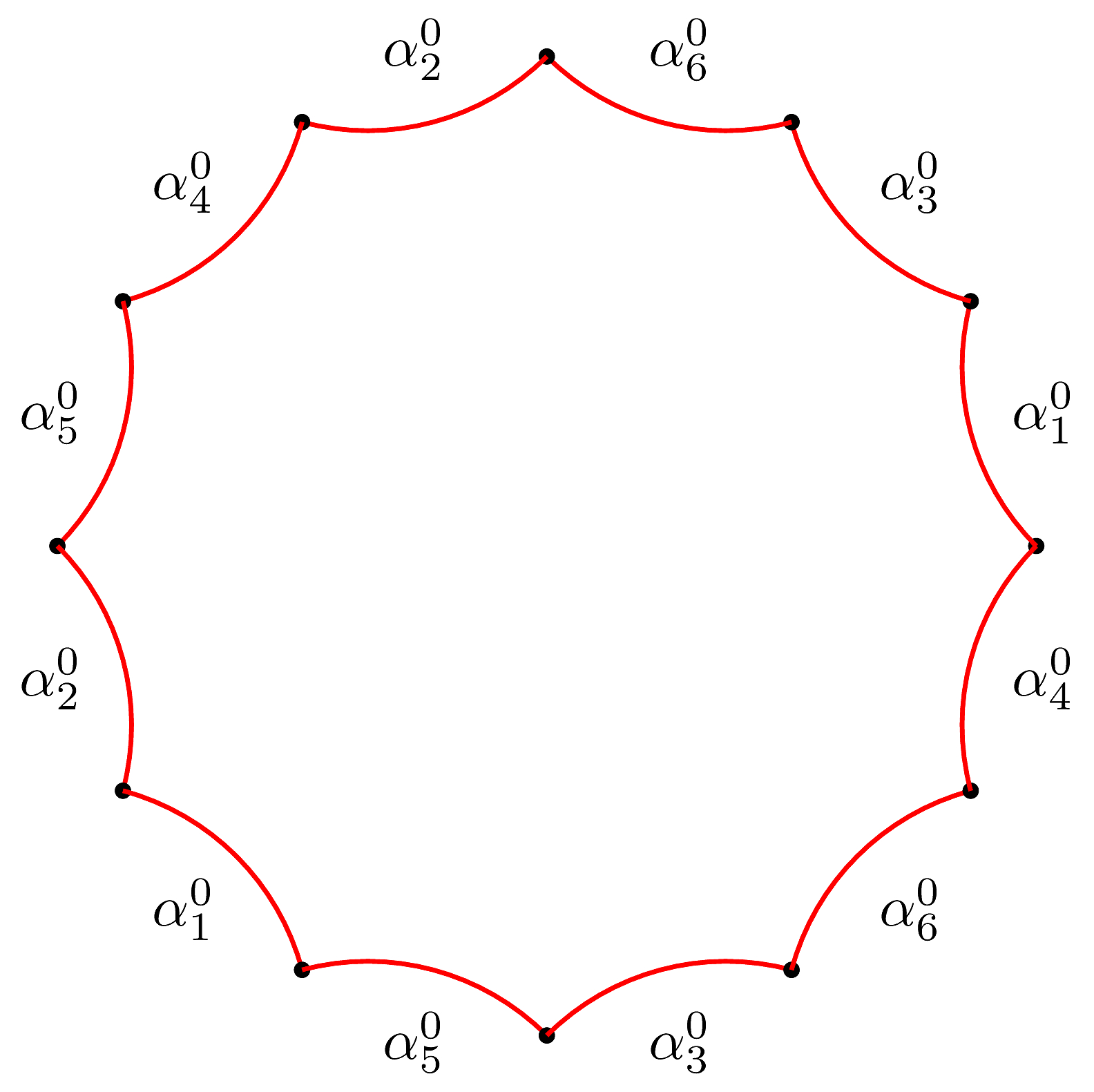}
\caption{Gluing pattern for genus 2.} \label{fig:genus_2_12_gon}
\end{figure}

Let $P$ be a regular hyperbolic $(8g-4)$-gon with right angles. By gluing $P$ in the same pattern as $Q$, we obtain a hyperbolic surface $X$ with genus $g$ and the boundary of the $P$ becomes a filling geodesic $\gamma$ as in Figure \ref{fig:shortest_candidate}. The length of $\gamma$ is half the perimeter of $P$.


\begin{proof}[Proof of Theorem \ref{thm:main}]
Let $X$ be a hyperbolic surface with genus $g$ and $\Gamma $ be a filling multi-geodesic on $X$. By Theorem \ref{thm:filling_graph}, there is a filling geodesic graph $G$ on $X$ such that 
\begin{enumerate}
	\item $\ell(G) \leqslant \ell(\Gamma)$. 
	\item $X\backslash G$ consists of polygons $D_1, D_2, ..., D_k$.  $D_i$ has $m_i$ edges, $m_i\geqslant5$. 
	\item $\sum_{i=1}^k (m_i-4) = 8g-8$. 
\end{enumerate}
Then, by Theorem \ref{thm:isoperim}, we have
$$\ell(\Gamma) \geqslant \ell(G) = \frac{1}{2} \sum_{i=1}^{k}\perim(D_i) \geqslant \frac{1}{2} P_{8g-4}.$$ 
The above construction provided a single filling geodesic which realized the length $\frac12P_{8g-4}$ for every genus $g$ and Theorem \ref{thm:main} follows.
\end{proof}

\begin{proof}[Proof of Theorem \ref{thm:rigid}]

Suppose that $\Gamma$ is a filling multi-geodesic on a genus $g$ hyperbolic surface $X$ with $\ell(\Gamma) = \frac{1}{2} P_{8g-4}$. By Theorem \ref{thm:filling_graph}, we get a filling graph $G$ so that $\ell(G)\leqslant\ell(\Gamma)$ and $X\setminus G$ consists of polygons with edge number $\geqslant5$. We have $\ell(G)\geqslant\frac12P_{8g-4}$ by Theorem \ref{thm:isoperim} and it follows that $\ell(G)=\ell(\Gamma)$. From the proof of Theorem \ref{thm:filling_graph}, all essential proper arcs are shortest under relative homotopy since we have $\ell(G)<\ell(\Gamma)$ otherwise. Hence $G$ is a subgraph of $\Gamma$ and then $G=\Gamma$ since they have the same length.

Now $X\setminus\Gamma$ consists of polygons with edge number $\geqslant5$ and $\ell(\Gamma) = \frac{1}{2} P_{8g-4}$, $X\setminus\Gamma$ is a regular right-angled $(8g-4)$-gon by the equality result in Theorem \ref{thm:isoperim}. 
\end{proof}



\end{document}